\documentclass[11pt,reqno]{amsart}
\usepackage{amsmath}
\usepackage{amsthm}
\usepackage{graphicx}
\usepackage{hyperref}
\usepackage{amsfonts}
\usepackage{amssymb}
\usepackage{enumerate}
\usepackage[margin=1.1in]{geometry}
\parindent=.25in

\usepackage[foot]{amsaddr}

\usepackage[english]{babel}
\usepackage[latin1]{inputenc}
\usepackage[numbers]{natbib}

\usepackage[normalem]{ulem}

\DeclareOldFontCommand{\bf}{\normalfont\bfseries}{\mathbf}
\DeclareOldFontCommand{\cal}{\normalfont\bfseries}{\mathcal}

\newtheorem{theorem}{Theorem}[section]
\newtheorem{lemma}[theorem]{Lemma}
\newtheorem{proposition}[theorem]{Proposition}
\newtheorem{corollary}[theorem]{Corollary}

\theoremstyle{definition}
\newtheorem{example}[theorem]{Example}

\newtheorem{remark}[theorem]{Remark}

\def\R{{\mathbb R}}

\def\N{{\mathbb N}}
\def\PP{{\mathbb P}}
\def\FF{{\mathbb F}}
\def\P{{\mathcal P}}

\def\Q{{\mathcal Q}}
\def\L{{\mathcal L}}

\def\A{{\mathcal A}}

\def\F{{\mathcal F}}
\def\Rel{{\mathcal R}}

\DeclareMathOperator*{\esssup}{ess\,sup}

\def\W{P}
\def\E{{\mathbb E}}
\def\C{{\mathcal C}}

\pdfoutput=1
\usepackage{color}

\begin{document}

\title[Non-exponential Sanov and Schilder theorems on Wiener space]{Non-exponential Sanov and Schilder theorems on Wiener space: BSDEs, Schr\"odinger problems and Control.}

\author{Julio Backhoff-Veraguas}
\address{University of Vienna, Stochastics and Financial Mathematics Group}
\author{Daniel Lacker}
\address{Columbia University, Industrial Engineering and Operations Research}
\author{Ludovic Tangpi}
\address{Princeton University, Operations Research and Financial Engineering}

\maketitle

\begin{center}
\today
\end{center}

\begin{abstract}
We derive new limit theorems for Brownian motion, which can be seen as non-exponential analogues of the large deviation theorems of Sanov and Schilder in their Laplace principle forms. As a first application, we obtain novel scaling limits of backward stochastic differential equations and their related partial differential equations. As a second application, we extend prior results on the small-noise limit of  the Schr\"odinger problem as an optimal transport cost, unifying the control-theoretic and probabilistic approaches initiated respectively by T.\ Mikami and C.\ L\'eonard. Lastly, our results suggest a new scheme for the computation of mean field optimal control problems, distinct from the conventional particle approximation. A key ingredient in our analysis is an extension of the classical variational formula (often attributed to Borell or Bou\'e-Dupuis) for the Laplace transform of Wiener measure.
\end{abstract}

\tableofcontents

\section{Introduction}

In this work we develop two new limit theorems for the Wiener process {along with several applications}. 
These can be seen as non-exponential extensions of the classical large deviation principles of Schilder and Sanov in their Laplace principle forms. Our findings build on the recent limit theorems obtained in the article \cite{Lac-Sanov} by the second named author in an abstract setting.  Along the way, we derive a variational principle for the Wiener process which can be seen as a reformulation of Gibbs variational principle as initiated by \cite{Fl78,Boue-Dup}; see also \cite{Bo00,Lehec} for further developments. Our two limit theorems turn out to be a common ground for three domains of application, as we now describe.

Our first application concerns the theory of backward stochastic differential equations (BSDE), and their related convex dual and PDE representations.
Our two main limit theorems lead to two new kinds of scaling limits for BSDEs. One of these scaling limits can be seen as a non-Markovian vanishing-viscosity limit. Indeed, by exploiting the well-known link between BSDEs and semilinear PDEs (see e.g.\ \cite{Pardoux-Peng92,Pardoux-Tang99}), our result recovers as a special case the well-known convergence of a viscous Hamilton-Jacobi equation to its inviscid counterpart as the viscosity coefficient vanishes.
In our second and more peculiar BSDE scaling limit, the terminal condition depends on the empirical distribution of $n$ rescaled sub-paths of the Brownian motion; although decidedly non-Markovian, in a special case this translates to a limit theorem for ``concatenated'' semilinear PDEs.

Our second application concerns the convergence of Schr\"odinger-type problems (also called stochastic optimal transport) to classical optimal transport in the small noise limit. 
The Schr\"odinger problem is a classical topic in probability theory and mechanics, see e.g.\ \cite{Le14} and the references therein, and its link to optimal transportation was developed by \citet{Foel88} in his Saint Flour lecture notes.
The study of small-noise limits of Schr\"odinger problems was pioneered by Mikami in the works \cite{Mi04,MiTh08}, the second joint with Thieullen. The main tool in these articles was stochastic control and partial differential equations (PDEs). Subsequently, an elegant large deviations viewpoint was  developed by L\'eonard in \cite{Le12,Le16}. We draw inspiration from both approaches, to a certain extend unifying them, as we exploit our limit theorems in order to obtain new small-noise results for Schr\"odinger-type problems.

The third application is a surprising connection with a particular type of optimal control problem, known as \emph{mean field} or \emph{McKean-Vlasov optimal control}, which have seen a surge of interest in recent years; see \cite{CarmonaDelarue-MVcontrol,PhamWei-MVcontrol,Lacker-MVcontrol} and references therein. The limiting quantity in our Sanov-type theorems can be seen as the value of an optimal control problem in which the dependence of the optimization criterion on the law of the state process is nonlinear. Our limit theorem provides a peculiar new approximation scheme for such problems, markedly different from the natural particle approximation worked out in \cite{Lacker-MVcontrol}.

So far we have superficially described the contributions of this article. We now proceed to present the setting and main results in detail.

\section{Setting and main results}
\label{sec setting}

Let $\C=C([0,1];\R^d)$ denote the continuous path space, equipped with the supremum norm $\|\cdot\|_\infty$, and its Borel $\sigma$-field. Let $\W$ denote the standard Wiener measure on $\C$. With $W=(W(t))_{t \in [0,1]}$ we denote the canonical (coordinate) process on $\C$, defined by setting $W(t)(\omega)=\omega(t)$, so that $W$ is a standard $d$-dimensional Brownian motion under $\W$. Let $\FF=(\F_t)_{t \in [0,1]}$ denote the $\W$-complete filtration generated by $W$.
As usual, we denote by $L^0(\W)$ the space of (real-valued) random variables quotiented with the $\W$-a.s.\  identification, and by $L^\infty(\W)$ the essentially bounded elements of $L^0(\W)$. We will likewise identify processes that are $dt\otimes d\W$-almost surely equal. 

Throughout the paper we consider $g : [0,1] \times \R^d \rightarrow \R \cup\{\infty\}$, a function whose effective domain we define as $\mathrm{dom}(g(t,\cdot)) := \{q \in \R^d : g(t,q) < \infty \}$, and satisfying the assumption:
\begin{itemize}
\item[{(TI)}] 
The function $g$ is measurable and bounded from below, and it is coercive in the sense that $\lim_{|q|\to \infty}\inf_{t \in [0,1]}\frac{g(t,q)}{|q|} = \infty$. For each $t \in [0,1]$ the function $g(t,\cdot)$ is convex, proper, and lower semicontinuous. {Finally, the following technical conditions hold}:
\begin{align}
\label{TI:domain}
	0 \in \mathrm{ri}(\mathrm{dom}(g(t,\cdot))) = \mathrm{ri}(\mathrm{dom}(g(s,\cdot)))=:\Rel\quad \text{for all } s,t \in [0,1]
\end{align}
and
\begin{align}
	\sup_{\substack{|q| \le r\\ q \in \Rel }}g(t,q) \in L^1([0,1],dt)
	\label{TI: time integrability} \quad \text{for all } r\ge 0,
\end{align}
where $\mathrm{ri}(\mathrm{dom}(g(t,\cdot)))$ denotes the relative interior of $\mathrm{dom}(g(t,\cdot))$. 
\end{itemize}
The final technical conditions \eqref{TI:domain} and \eqref{TI: time integrability} always holds if $g$ is finite-valued and jointly continuous. {A typical example which takes the value $+\infty$ and which satisfies (TI) is $g(t,\cdot) = +\infty 1_{K}(q)$, the convex indicator of a convex compact set $K \subset \R^d$.}
{The assumption that $0 \in \Rel$ is unnecessary, but it is convenient and not terribly restrictive.}

Define $\L$  to be the set of progressively measurable $\R^d$-valued processes $q : [0,1] \times \C \rightarrow \R^d$ satisfying $\W(\int_{0}^{1}|q(t)|^2dt < \infty) = 1$.  We often write $q(t)=q(t,\cdot)$, suppressing the dependence on $\omega \in \C$. We denote by $\int_0^1q^Q(t)\,dW(t)$ the stochastic integral $\int_0^1q^Q(t)\cdot dW(t)$. Let $\L_b \subset \L$ denote the subset of \emph{bounded} processes.
Let ${\cal Q}$ be the set of probability measures absolutely continuous with respect to $\W$.
It is well known that for every $Q \in {\cal Q}$, there is a unique process  $q^Q \in {\cal L}$ such that $Q$-a.s.
\begin{equation*}
	\frac{dQ}{d\W}= \exp\left(\int_0^1q^Q(t)\,dW(t)-\int_0^1\frac{1}{2}|q^Q(t)|^2\,dt \right).
\end{equation*}
{ A partial converse which we will often use is as follows: for any $q \in \L_b$, there is a unique $Q^q \in \Q$ such that $q^{Q^q}=q$. By Girsanov's theorem, we may express this measure as} {
\begin{align}
Q^q = \W \circ \left(W + \int_0^\cdot q(t)dt\right)^{-1}. \label{def:Q^q}
\end{align}}

The main objects we study are the conjugate functionals
\begin{align*}
\alpha^g :{\cal Q}\to \R\cup\{+\infty\}, \quad\quad \rho^g:L^\infty(\W)\to \R\cup\{+\infty\},
\end{align*}
respectively given by
\begin{equation}\label{eq def alpha rho}
	\alpha^g(Q):= \E^Q\left[\int_0^1g(t,q^Q(t))\,dt\right]\quad \text{and} \quad \rho^g(X):=\sup_{Q \in {\cal Q}} \left(\E^Q[X]-\alpha^g(Q)\right).
\end{equation}
Note that $\alpha^g(Q)$ is well defined and takes values in $ \R\cup\{+\infty\}$, as $g$ is bounded from below.

The classical example to keep in mind is the quadratic case, $g(t,q) = \tfrac12|q|^2$. In this case, $\alpha^g$ is nothing but the relative entropy,
\[\textstyle
\alpha^g(Q) = H(Q \, | \, \W) := \E\left[\frac{dQ}{d\W}\log\frac{dQ}{d\W}\right], \ \ Q \in \Q, \label{def:relativeentropy}
\]
and, by Gibbs' variational principle, $\rho^g$ is the cumulant generating functional, $\rho^g(X) = \log \E[e^X]$. A more trivial example is given by $\alpha^ g$ the convex indicator of $0$ and $\rho^g$ the expected value under Wiener measure.
In the following we write $\E$ for expectation under $\W$ and $\E^Q$ for expectation under any other measure $Q$.

One consequence of the assumption (TI) is the stochastic representation of $\rho^g$ in terms of backward stochastic differential equations (BSDE), which we recall: Let $g^*$ stand for the convex conjugate of $g$ in the spatial variable, namely
\begin{equation}
\label{eq gen conjug}	 g^*(t,z):= \sup_{ q \in \R^d}\left(  q \cdot z - g(t, q) \right).
\end{equation}
	Following \cite{DHK1101}, we say that a pair $(Y,Z)$, where $Y$ is a c\`adl\`ag and adapted process and with $Z\in {\cal L}$, is a supersolution to the BSDE (driven by $W$, with terminal condition $X \in L^0(\W)$, and generator $g^*$)
	\begin{equation}\textstyle
	\label{eq:bsde}
		dY(t) = -g^*(t,Z(t))\,dt + Z(t)\,dW(t), \quad Y(1) = X,
	\end{equation}
	if it satisfies
	\begin{equation}\textstyle
		\begin{cases}
 			Y(s)-\int_{s}^{t}g^*(u,Z(u))du+\int_{s}^{t}Z(u) dW(u)\geq Y(t), \quad \text{for every} \quad 0\leq s\leq t\leq 1\\
			\displaystyle Y(1)\geq X,
		\end{cases}
		\label{eq:supersolutions}
	\end{equation}
	and $\int Z\,dW$ is a supermartingale.
A supersolution $(\bar{Y},\bar{Z})$ of \eqref{eq:bsde} is said to be minimal if $\bar{Y}(t)\leq Y(t)$ for all $t \in [0,1]$ and for every other supersolution $(Y,Z)$. By \cite[Theorem 4.17]{DHK1101}, under the condition (TI), the BSDE \eqref{eq:bsde} admits a unique minimal supersolution for every terminal condition $X$ bounded from below.

The crucial link is given in \cite[Theorems 3.4/3.10]{tarpodual}, where it was shown that
\begin{equation}
\label{eq rho = Y}
\rho^g(X) = \bar{Y}(0),
\end{equation}
where $(\bar{Y},\bar{Z})$ is the minimal supersolution of \eqref{eq:supersolutions}, provided that $X$ is {e.g.\ bounded}. This is the aforementioned representation of $\rho^g$ in terms of a BSDE.
Additionally, it is well known that a nonlinear Feynman-Kac formula connects BSDEs with semilinear parabolic PDEs, and we will briefly elaborate on this perspective in Section \ref{sec:lim thm pde} below.
\begin{remark}
\label{rem: solution case}
If $X \in L^\infty(\W)$ and $g$ has at least quadratic growth, {then $g^*$ has subquadratic growth and} the BSDE \eqref{eq:bsde} admits a unique solution $(Y,Z)$ such that $Y$ is bounded (see, e.g., \cite{kobylanski01,Delbaen11}).
Thus, it follows by \cite[Theorem 4.6]{tarpodual} that $\rho^g(X) = Y_0$.
The minimal supersolution and the unique (true) solution coincide.
Consequently, all results stated in this paper for minimal supersolutions transfer to true solutions when $g$ is of superquadratic growth. When this is not the case, a solution to a BSDE need not exist or be unique (see e.g. \citet{Delbaen11}), and the weaker concept of minimal supersolution becomes essential.
\end{remark}

We will derive in Theorem \ref{thm BBD} yet another representation of $\rho^g$, in the spirit of stochastic optimal control. For $F \in L^\infty(\W)$  we show that
\begin{align}\label{eq BBD}\tag{BBD}
\rho^g(F)=\sup_{q \in \L_b} \E\left[ F \left ( W + \int_0^\cdot q(t)dt \right) - \int_0^1 g(t,q(t))dt  \right ].
\end{align}
The fact that $F$ is path-dependent here means that the representation \eqref{eq BBD} does not follow as quickly from the definition of $\rho^g(F)$  as it may seem at first sight.
In the case $g(t,q)=\frac 12 |q|^2$, the representation \eqref{eq BBD} was a key result of \citet{Boue-Dup} and \citet{Lehec}.

In this article we derive limit theorems for the functional $\rho^g$, which, as we have demonstrated, appears naturally in connection to stochastic control, BSDEs, and PDEs.
We first summarize our findings in Section \ref{sec: limit thm} in abstract terms. Then in Section \ref{sec: translation BSDEs} in terms of BSDE. This is followed by Section \ref{sec:schilder.to.schroe} where we present some new insights into the study of convergence of stochastic transport problems (i.e.\ Schr\"odinger-type problems) to optimal transport problems. Crucially, the latter results are obtained as a consequence of the limit theorems. We close this overview section with an outlook discussing connections with PDEs in Section \ref{sec:lim thm pde} and (mean field) optimal control in Section \ref{sec: mf control}.

\subsection{Limit Theorems}
\label{sec: limit thm}

To state our first main limit theorem, a non-exponential version of Sanov theorem in its Laplace principle form, we introduce the following notation:
For a Polish space $E$, we denote by $\P(E)$ the set of Borel probability measures on $E$ equipped with the topology of weak convergence and by $C_b(E)$ the space of bounded continuous functions on $E$.
For $n\in \N$, $k= 1,\dots,n$ and a path $\omega \in \C$, we define the \emph{chopped and rescaled path} $\omega_{(n,k)}\in \C$ by
\begin{equation}
\label{eq:chopped paths}
	\omega_{(n,k)}(t) := \sqrt{n}\left(\omega\Big(\frac{k-1 +t}{n}\Big) - \omega\Big(\frac{k-1}{n}\Big) \right), \quad t\in [0,1].
\end{equation}
Note that $(W_{(n,k)})_{k=1}^n$ are $n$ independent Brownian motions (under $\W$).
In the following, recall that we always work with a given function $g$ satisfying assumption (TI).

\begin{theorem} \label{th:mainlimit}
Define $G_n : [0,1] \times \R^d \rightarrow \R \cup \{\infty\}$ by
\begin{equation*}
	G_n(t,q) := g\left (nt - \lfloor nt\rfloor\,,\, \frac{q}{\sqrt{n}}\right ).
\end{equation*}
{Then $G_n$ satisfies (TI) for each $n$, and} for every $F \in C_b(\P(\C))$ we have
\begin{align*}
\lim_{n\rightarrow\infty}\,\rho^{G_n}\left(F \left(\frac 1n \sum_{k=1}^n\delta_{W_{(n,k)}}\right)\right) 
	&= \sup_{Q \in \Q}\left(F(Q) - \alpha^g(Q)\right) \\
	&= \sup_{q\in {\cal L}_b}\left(F(Q^q) - \E \left[\int_0^1g(t,q(t))dt\right]\right),
\end{align*}
where {$Q^q$ was defined in \eqref{def:Q^q}  for $q \in \L_b$}.
\end{theorem}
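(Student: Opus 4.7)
My plan is to sandwich $\rho^{G_n}(F(\mu_n))$ between two asymptotic bounds derived from the dual representations of $\rho^{G_n}$: the Gibbs form $\rho^{G_n}(\cdot) = \sup_Q(\E^Q[\cdot] - \alpha^{G_n}(Q))$ and the BBD form \eqref{eq BBD}. A routine check first shows that $G_n$ inherits (TI) from $g$: convexity, lower semicontinuity, and the technical conditions \eqref{TI:domain}--\eqref{TI: time integrability} transfer directly, while coercivity of $G_n$ follows from that of $g$ after the substitution $q \mapsto q/\sqrt n$. Applying Theorem~\ref{thm BBD} to $F(\mu_n) \in L^\infty(\W)$ with generator $G_n$, together with the change of variable $s = nt - (k-1)$ on each window $[(k-1)/n, k/n]$, yields
\[\rho^{G_n}(F(\mu_n)) = \sup_{q \in \L_b} \E\!\left[F(\mu_n^q) - \frac{1}{n}\sum_{k=1}^n \int_0^1 g(s, q_{(n,k)}(s))\,ds\right],\]
where $W^q := W + \int_0^\cdot q\,ds$, $\mu_n^q := \frac{1}{n}\sum_k \delta_{W^q_{(n,k)}}$, $q_{(n,k)}(s) := q((s+k-1)/n)/\sqrt n$, and $W^q_{(n,k)} = W_{(n,k)} + \int_0^\cdot q_{(n,k)}(s)\,ds$.

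For the \emph{liminf}, I would replicate any $q_0 \in \L_b$ at scale $n$ via $q^{(n)}(t, \omega) := \sqrt n\, q_0(nt-(k-1), \omega_{(n,k)})$ on $[(k-1)/n, k/n)$, so that $q^{(n)}_{(n,k)}(s,\omega) = q_0(s, \omega_{(n,k)})$. Since $(W_{(n,k)})_k$ are i.i.d.\ Brownian motions under $\W$, the drifted paths $W^{q^{(n)}}_{(n,k)}$ are i.i.d.\ samples from $Q^{q_0}$. Varadarajan's law of large numbers combined with the bounded continuity of $F$ gives $\E[F(\mu_n^{q^{(n)}})] \to F(Q^{q_0})$, while the cost is identically $\E[\int_0^1 g(s, q_0(s))\,ds]$. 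Optimizing over $q_0 \in \L_b$ yields $\liminf_n \rho^{G_n}(F(\mu_n)) \ge \sup_{q \in \L_b}(F(Q^q) - \E[\int_0^1 g(t,q(t))\,dt])$.

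For the \emph{limsup}, I would use the Gibbs representation. Given an $\varepsilon$-optimal $Q_n \in \Q$, let $Q_{n,k} := \mathrm{Law}_{Q_n}(W_{(n,k)})$ and $\bar Q_n := \frac{1}{n}\sum_k Q_{n,k}$. A conditional Jensen inequality applied to the Girsanov drift $q^{Q_n}_{(n,k)}$ projected onto the filtration of $W_{(n,k)}$ gives $\alpha^g(Q_{n,k}) \le \E^{Q_n}[\int_0^1 g(s, q^{Q_n}_{(n,k)}(s))\,ds]$; averaging plus convexity of $\alpha^g$ then delivers the key estimate $\alpha^g(\bar Q_n) \le \alpha^{G_n}(Q_n)$, which combined with coercivity of $g$ gives tightness of $(\bar Q_n)$ in $\P(\C)$. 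The crucial remaining step---and the \textbf{main obstacle}---is a propagation-of-chaos/weak law of large numbers asserting $\E^{Q_n}[F(\mu_n)] - F(\bar Q_n) \to 0$ along a subsequence: the sub-paths $W_{(n,k)}$ are dependent under $Q_n$ through the common drift $q^{Q_n}$, but each block's contribution is confined to a time window of length $1/n$ and the uniform bound on $\alpha^{G_n}(Q_n)$ controls the inter-block covariances. Extracting a subsequence $\bar Q_n \to Q^*$, lsc of $\alpha^g$ and continuity of $F$ then yield $\limsup_n \rho^{G_n}(F(\mu_n)) \le F(Q^*) - \alpha^g(Q^*) \le \sup_Q(F(Q) - \alpha^g(Q))$.

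The two bounds produce the chain $\sup_{q}(F(Q^q) - \E[\int g\,dt]) \le \liminf \le \limsup \le \sup_Q(F(Q) - \alpha^g(Q))$. The reverse inequality $\sup_Q \le \sup_{q}$, which forces all the inequalities to be equalities, is an immediate consequence of Theorem~\ref{thm BBD} applied at $n=1$ for linear functionals $F(Q) = \E^Q[X]$; extending it to general $F \in C_b(\P(\C))$ requires a variational approximation of arbitrary $Q \in \Q$ by laws $Q^{q^{(m)}}$ with $q^{(m)} \in \L_b$ and approximating cost $\E[\int g(s, q^{(m)})\,ds] \lesssim \alpha^g(Q)$. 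This step is subtle because the $\E^\W$- and $\E^Q$-integrals of $g(s, q^Q)$ need not coincide, but it can be handled by exploiting the lower semicontinuity structure of $\alpha^g$ together with a density argument analogous to the one used in the proof of \eqref{eq BBD}.
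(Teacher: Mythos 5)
Your lower bound is sound: replicating a fixed $q_0\in\L_b$ across the $n$ blocks via the BBD representation, noting that the controlled chopped paths are i.i.d.\ with law $Q^{q_0}$, and invoking the law of large numbers for empirical measures does give $\liminf_n\rho^{G_n}\ge\sup_{q\in\L_b}(F(Q^q)-\E[\int_0^1 g(t,q(t))dt])$; also, the identification $\sup_{Q\in\Q}(F(Q)-\alpha^g(Q))=\sup_{q\in\L_b}(\cdots)$ is less subtle than you suggest, since weak continuity of $F$ implies total-variation continuity and Theorem~\ref{thm BBD} applies directly. The problems are in your upper bound, and they are genuine gaps rather than omitted routine steps.

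First, the step you yourself flag as the ``main obstacle,'' namely $\E^{Q_n}[F(\mu_n)]-F(\bar Q_n)\to 0$ with $\bar Q_n:=\frac1n\sum_k \mathrm{Law}_{Q_n}(W_{(n,k)})$, is not merely unproven: as stated it is false in general. Under a near-optimal $Q_n$ the drift on block $k$ may depend on the whole past, so the blocks are genuinely dependent and the empirical measure $\mu_n$ does not concentrate around the deterministic average $\bar Q_n$; it is only close to the \emph{random} average of conditional block laws (e.g.\ a control that chooses between two very different drift regimes depending on the first block produces a non-degenerate random limit for $\mu_n$). Since $F$ is neither convex nor concave, you cannot pass from $\E^{Q_n}[F(\mu_n)]$ to $F$ of an averaged measure; the correct argument must carry the random conditional measures to the limit and bound $\E[F(\bar\nu)-\alpha(\bar\nu)]$ by the deterministic supremum only at the very end. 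This is exactly the content of the abstract result the paper relies on: the paper proves the identity $\rho^{G_n}(F(L_n))=\frac1n\rho^g_n(nF\circ L_n)$ (Proposition~\ref{pr:rhon}, via Theorem~\ref{thm BBD} and dynamic programming) and then invokes \cite[Theorem 1.1]{Lac-Sanov}, rather than re-deriving this law-of-large-numbers step.

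Second, your limsup argument invokes lower semicontinuity of $\alpha^g$ along a weakly convergent subsequence $\bar Q_n\to Q^*$. The paper shows this fails when $g$ is sub-quadratic: Example~4.3 exhibits $Q_n$ with $\alpha^g(Q_n)\le 16$ converging weakly to a measure singular with respect to $\W$, so sub-level sets of $\alpha^g$ are not weakly closed and the limit $Q^*$ need not lie in $\Q$ at all. This is precisely why the paper introduces the relaxed functional $\tilde\alpha^g$ on the larger class $\P^*$ (Lemma~\ref{thm technical}: convexity, lower semicontinuity, compact sub-level sets), proves $\rho^g=\tilde\rho^g$ on bounded lower semicontinuous integrands (Lemmas~\ref{lem concidence} and \ref{lem coincidence n}), and only at the end shows $\sup_{Q\in\P^*}(F(Q)-\tilde\alpha^g(Q))=\sup_{Q\in\Q}(F(Q)-\alpha^g(Q))$ by an explicit approximation of any $Q\in\P^*$ with finite $\tilde\alpha^g$ by absolutely continuous laws. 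Your argument as written would at best bound the limsup by $F(Q^*)-\tilde\alpha^g(Q^*)$ for some $Q^*$ possibly outside $\Q$, and the reconciliation with the right-hand side of the theorem is missing. Both of these missing ingredients constitute the technical core of the paper's proof, so the proposal cannot be accepted as a proof in its current form.
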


The proof is given at the end of Section \ref{sec: proof Limit Thms}.
For the second main result, we adopt the convention that
\begin{align}
	\int_0^1g(t,\dot\omega(t))\,dt = +\infty \label{def:+infty-convention}
\end{align}
whenever $\omega \in \C$ is not absolutely continuous. Define $\C_0:= \{\omega \in \C : \omega(0)=0\}$. {Our second main limit theorem is a non-exponential version of Schilder theorem in Laplace principle form:}

\begin{theorem} \label{co:Schilder-finaltime}
Denote $g_n(t,q):=g(t,q/\sqrt{n} )$. {Then $g_n$ satisfies (TI) for each $n$, and} for every $F \in C_b(\C)$, we have 
\begin{align} \label{eq Schilder familiar}
\lim_{n\to \infty}\,\rho^{g_n}\left(F\left(\frac{W}{\sqrt{n}}\right)\right)= \sup_{\omega \in \C_0} \left (F(\omega) - \int_0^1 g(t,\dot{\omega}(t))dt\right).
\end{align}
Moreover, if $g(t,q)=g(q)$ does not depend on $t$, and if $h \in C_b(\R^d)$, we have
\begin{align*}
\lim _{n\to \infty}\,\rho^{g_n}\left(h\left(\frac{W(1)}{\sqrt{n}}\right)\right)= \sup_{x \in \R^d}(h(x)-g(x)).
\end{align*}
\end{theorem}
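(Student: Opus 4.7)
The plan is to exploit the variational representation \eqref{eq BBD}. Applying Theorem \ref{thm BBD} to the bounded random variable $F(W/\sqrt n)$ with generator $g_n$, and then performing the change of variable $\tilde q = q/\sqrt n$ (which maps $\L_b$ bijectively onto itself), yields the key identity
\begin{equation*}
\rho^{g_n}(F(W/\sqrt n)) \;=\; \sup_{\tilde q \in \L_b} \E\left[F\!\left(\tfrac{W}{\sqrt n} + \int_0^\cdot \tilde q(t)\,dt\right) - \int_0^1 g(t,\tilde q(t))\,dt\right].
\end{equation*}
This identity explains why the scaling $g_n(t,q) = g(t,q/\sqrt n)$ is the right one, and it will drive both bounds.

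For the $\liminf$ I would restrict the supremum above to deterministic bounded drifts. Since $\|W/\sqrt n\|_\infty \to 0$ almost surely, continuity and boundedness of $F$ combined with dominated convergence give
\begin{equation*}
\liminf_n \rho^{g_n}(F(W/\sqrt n)) \;\geq\; F(\omega) - \int_0^1 g(t,\dot\omega(t))\,dt
\end{equation*}
for every absolutely continuous $\omega \in \C_0$ with $\dot\omega \in L^\infty$. To pass to the supremum over all of $\C_0$, I would approximate an arbitrary absolutely continuous $\omega$ satisfying $\int g(t,\dot\omega)dt < \infty$ by the truncations $\omega_k$ defined via $\dot\omega_k := \dot\omega\,\mathbf{1}_{\{|\dot\omega|\le k\}}$; uniform convergence $\omega_k\to\omega$ is immediate, and the integrability of $g(\cdot,0)$ granted by \eqref{TI: time integrability} together with dominated convergence yield $\int g(t,\dot\omega_k)dt \to \int g(t,\dot\omega)dt$.

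For the $\limsup$ I introduce the deterministic functional
\begin{equation*}
H(y) \;:=\; \sup_{\phi \in \C_0 \text{ abs.\ cont.}} \left[F(y+\phi) - \int_0^1 g(t,\dot\phi(t))\,dt\right],\qquad y \in \C_0,
\end{equation*}
so that optimizing pathwise in the key identity gives $\rho^{g_n}(F(W/\sqrt n)) \leq \E[H(W/\sqrt n)]$. Observe that $H(0)$ is precisely the right-hand side of \eqref{eq Schilder familiar}, so bounded convergence applied to $H(W/\sqrt n) \to H(0)$ reduces matters to establishing continuity of $H$ at $0$. Here lies the crux: by the superlinear coercivity in (TI), any $\varepsilon$-maximizer $\phi$ of $H(y)$ obeys a uniform bound $\int g(t,\dot\phi)dt \leq C$, which via de la Vall\'ee-Poussin renders the family $\{\dot\phi\}$ equi-integrable and, by Arzel\`a--Ascoli, confines such $\phi$ to a compact subset of $\C_0$ independent of $y$; on this compact set $F$ is uniformly continuous, so $|F(y+\phi)-F(\phi)|\to 0$ uniformly as $y\to 0$. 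This compactness reduction is the main technical step.

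Finally, the second assertion follows by applying the first part to $F(\omega) := h(\omega(1))\in C_b(\C)$. For time-independent $g$, Jensen's inequality gives $\int_0^1 g(\dot\omega(t))\,dt \geq g\!\left(\int_0^1 \dot\omega(t)\,dt\right) = g(\omega(1))$ for every absolutely continuous $\omega \in \C_0$, whence $h(\omega(1)) - \int_0^1 g(\dot\omega)dt \leq \sup_{x\in\R^d}(h(x)-g(x))$; the straight-line path $\omega(t) = tx$ attains equality.
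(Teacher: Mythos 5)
Your proposal is correct, and its first half coincides with the paper's own proof: the key identity via Theorem \ref{thm BBD} and the substitution $\tilde q=q/\sqrt n$ is exactly the paper's starting point, and the lower bound (deterministic bounded drifts, $W/\sqrt n\to 0$ with dominated convergence, then truncation of $\dot\omega$ -- the paper projects onto the ball of radius $k$ rather than setting large values to zero, but under (TI) either variant works) is the same argument. The upper bound, however, takes a genuinely different route. The paper picks $1/n$-optimal controls $q_n$, extracts from the trivial bound \eqref{pf:schilder-lowerbound} a uniform estimate on $\E\int_0^1 g(t,q_n(t))dt$, and then runs a stochastic compactness argument through Lemma \ref{le:H1-tightness} (Aldous' criterion, Skorokhod representation, de la Vall\'ee-Poussin and weak $L^1$ lower semicontinuity) to pass to the limit in law along subsequences. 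You instead bound pathwise by the deterministic value function $H(W/\sqrt n)$ and reduce everything to continuity of $H$ at $0$, which you obtain from compactness of the sublevel sets $K_C:=\{\phi\in\C_0 \ \text{absolutely continuous}: \int_0^1 g(t,\dot\phi(t))dt\le C\}$ of the deterministic action functional, finishing with bounded convergence; this avoids the random tightness and subsequence machinery entirely and is arguably more elementary, at the cost of two points you should make explicit: (i) $H$ is bounded and Borel measurable (it is lower semicontinuous, being a supremum over $\phi$ of continuous functions of $y$), so $\E[H(W/\sqrt n)]$ is well defined; (ii) uniform continuity of $F$ \emph{on} the compact set $K_C$ is not literally what is needed, since $y+\phi\notin K_C$ in general -- the correct statement is the standard fact that continuity of $F$ gives $\sup_{\phi\in K_C}|F(y+\phi)-F(\phi)|\to 0$ as $\|y\|_\infty\to 0$ (by a sequential compactness argument), and continuity of $H$ at $0$ then follows by applying the $\varepsilon$-maximizer bound in both directions. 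Note also that the paper's Lemma \ref{le:H1-tightness} is reused later (in Theorem \ref{thm HL} and Lemma \ref{thm technical}), so the authors' route has economies your self-contained argument does not provide, and that the routine verification that $g_n$ satisfies (TI) is left unaddressed in your write-up. Your treatment of the second claim (apply the first part to $F(\omega)=h(\omega(1))$, then Jensen plus the straight-line path) matches the paper's (cf.\ Proposition \ref{prop:hopf-lax}).
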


The proof is given at the end of Section \ref{sec:var rep}.
Returning to the quadratic case $g(t,q):= \frac 12 |q|^2$ reveals how Theorem \ref{th:mainlimit} and \ref{co:Schilder-finaltime} relate to the classical theorems of Sanov and Schilder.
In this case, $G_n(t,q)=\frac{1}{2n}|q|^2 $ for every $n$, and as mentioned above we get
\[
\rho^{G_n}(X) = \frac{1}{n}\log\E[e^{nX}],
\]
and $\alpha^g(Q)=H(Q|\W)$ is the relative entropy, as defined in \eqref{def:relativeentropy}.
Similarly,
\[
\rho^{g_n}(X) = \frac 1n\log\E[e^{nX}].
\]
Thus in the quadratic case Theorems \ref{th:mainlimit} and \ref{co:Schilder-finaltime} respectively reduce to Sanov and Schilder theorem for Brownian motion in their Laplace principle forms; {see \cite[Theorems 6.2.10 and 5.2.3]{Dem-Zei} respectively for classical statements of Sanov and Schilder's theorems and \cite[Theorems 1.2.1 and 1.2.3]{dupuis-ellis} for the equivalence with Laplace principles}. {For another known example}, if $g$ is the convex indicator of $0$ then $\rho^g$ is just expectation under Wiener measure. In this case Theorem \ref{th:mainlimit} {reduces to the law of large numbers, stating} that the random measures $\frac 1n \sum_{k=1}^n\delta_{W_{(n,k)}}$ converge weakly to $\W$.

{It is important to note that the chopped paths $(W_{(n,k)})_{k=1}^n$ appearing in Theorem \ref{th:mainlimit} cannot be replaced with an arbitrary sequence of $n$ independent Brownian motions, because} the functional $\rho^{G_n}$ is not necessarily law-invariant!\footnote{A functional $\rho:L^0\to \mathbb{R}\cup\{+\infty\}$ is law-invariant if $\rho(X) = \rho(X')$ whenever $X$ and $X'$ have the same law.} For this reason, Theorem \ref{co:Schilder-finaltime} cannot be deduced from Theorem \ref{th:mainlimit}, contrary to the classical case in which Schilder's theorem can be deduced from Sanov's theorem and continuous mapping.
Nevertheless,  in Corollary \ref{co:Schilder-finaltime weird} we derive from Theorem \ref{th:mainlimit} a result more in the spirit of Cram\'er's theorem, which notably shares the same limiting expression as Theorem \ref{co:Schilder-finaltime} despite involving a quite distinct pre-limit quantity.

The key to proving these limit theorems is the stochastic control representation \eqref{eq BBD}, which we establish in Theorem \ref{thm BBD}, as well as the results in \citet{Lac-Sanov}. A major difficulty is the lack of lower-semicontinuity of $\rho^g$ and weak compactness of the sublevel sets of $\alpha^g$ when $g$ is sub-quadratic, which necessitates the study of a better-behaved functional (see $\tilde{\alpha}^g$  in Section \ref{se:compactness-alpha}). 
In Section \ref{se:extensions}, we extend Theorem \ref{co:Schilder-finaltime} to random initial conditions and Theorem \ref{th:mainlimit} to stronger topologies.

\subsection{Scaling limits of BSDE}
\label{sec: translation BSDEs}

In this section, we state two new results on scaling limits for BSDEs. 
Owing to the representation \eqref{eq rho = Y}, these results would follow immediately from Theorems \ref{th:mainlimit} and \ref{co:Schilder-finaltime} if we only considered the value at time zero of such BSDEs. Nonetheless, we are able to bootstrap Theorem \ref{th:mainlimit} and \ref{co:Schilder-finaltime} in order to obtain limits at every time, and not just at time zero. Proofs are deferred to Section \ref{sec BSDE scaled limits}.

\begin{theorem}
\label{thm: Sanov time}
Let $F \in C_b(\P(\C))$, and let  $(Y_n,Z_n)$ be the minimal supersolution of the BSDE
{\begin{equation}
	dY(t) = -{g^*\left( nt - \lfloor nt \rfloor , \sqrt{n}Z(t)\right) } \,dt + Z(t)\,dW(t),\quad  Y(1)=F\left(\frac{1}{n}\sum_{k=1}^n\delta_{W_{(n,k)}}\right ). \label{eq simple BSDE - Sanov}
\end{equation}}
Then, for each $t \in [0,1]$, we have the a.s.\ limit
\begin{align*}
\lim_{n\to\infty}Y_n\left(\frac{\lfloor nt \rfloor}{n}\right) &= {\sup_{Q \in \Q}\left(F(t\W + (1-t)Q) - (1-t)\alpha^g(Q)\right)} \\
	&= \sup_{q \in \L_b}\left(F(t\W + (1-t)Q^q) - (1-t)\,\E\left [\int_0^1g(s,q(s))ds\right ]\right).
\end{align*}
\end{theorem}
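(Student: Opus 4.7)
The plan is to reduce $Y_n(\lfloor nt\rfloor/n)$ to a stochastic-control problem over the horizon $[\lfloor nt\rfloor/n, 1]$ via a dynamic version of the Bou\'e--Dupuis representation \eqref{eq BBD}, and then pass to the limit by combining the ideas behind Theorem \ref{th:mainlimit} with a law of large numbers for the already-observed sub-paths.

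Writing $k_n := \lfloor nt\rfloor$, the first step is the dual representation
\[
Y_n(k_n/n) = \esssup_{q \in \L_b}\E\Bigl[F\bigl(\mu_n^q\bigr) - \int_{k_n/n}^1 G_n(s,q(s))\,ds \,\Big|\, \F_{k_n/n}\Bigr],
\]
with $\mu_n^q = \tfrac{1}{n}\sum_{j=1}^{k_n}\delta_{W_{(n,j)}} + \tfrac{1}{n}\sum_{j=k_n+1}^{n}\delta_{W_{(n,j)} + \int_0^\cdot \tilde q_j(v)\,dv}$, where the substitution $\tilde q_j(v) := q((j-1+v)/n)/\sqrt n$ simultaneously rewrites the cost as $\tfrac{1}{n}\sum_{j=k_n+1}^n \int_0^1 g(v,\tilde q_j(v))\,dv$. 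This identity should follow by adapting the proof of Theorem \ref{thm BBD} together with \eqref{eq rho = Y} to the conditional/shifted setting, and using that the sub-paths $W_{(n,j)}$ for $j \le k_n$ are $\F_{k_n/n}$-measurable and unaffected by any control on $[k_n/n, 1]$.

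For the lower bound on $\liminf_n Y_n(k_n/n)$, I will fix $q \in \L_b$ and consider the admissible choice that applies the same $q$ on each remaining sub-path. The classical law of large numbers for i.i.d.\ empirical measures gives $\tfrac{1}{n}\sum_{j \le k_n}\delta_{W_{(n,j)}} \to t\W$ and $\tfrac{1}{n}\sum_{k_n<j\le n}\delta_{W_{(n,j)}+\int_0^\cdot q(v)\,dv} \to (1-t)Q^q$ weakly, almost surely, while the integrated cost tends to $(1-t)\alpha^g(Q^q)$. Continuity and boundedness of $F$, together with dominated convergence inside the conditional expectation, will then yield the liminf bound, and taking the supremum over $q \in \L_b$ delivers the lower bound in the stated form.

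The main obstacle is the matching upper bound. The natural move is to introduce, pathwise, the random test function
\[
\hat F_n(\nu) := F\Bigl(\tfrac{1}{n}\sum_{j \le k_n}\delta_{W_{(n,j)}} + \tfrac{n-k_n}{n}\,\nu\Bigr),
\]
so that the conditional problem becomes, up to the scaling $(n-k_n)/n$, precisely the pre-limit quantity in Theorem \ref{th:mainlimit} applied to the $n-k_n$ independent Brownian motions $W_{(n,k_n+1)},\dots,W_{(n,n)}$ with test function $\hat F_n$. By continuity of $F$ and the almost-sure convergence of the observed empirical measure to $t\W$, the $\hat F_n$ will converge almost surely, uniformly on weakly compact subsets of $\P(\C)$, to $F^t : \nu \mapsto F(t\W+(1-t)\nu)$. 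The delicate point is to adapt the proof of Theorem \ref{th:mainlimit} -- which rests on \eqref{eq BBD} and on the compactness properties of the modified penalty $\tilde\alpha^g$ introduced in Section \ref{se:compactness-alpha} -- to a sequence of test functions converging only in this random, uniform-on-compacts sense; concretely, one must show that the Laplace-type upper bound of that proof is robust when the continuous functional is allowed to vary with $n$. Once this is achieved, the identity $\alpha^g(Q^q)=\E[\int_0^1 g(v,q(v))\,dv]$ combined with the density of $\{Q^q : q \in \L_b\}$ inside $\{Q \in \Q : \alpha^g(Q) < \infty\}$ links the two equivalent forms of the limit.
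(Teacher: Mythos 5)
Your overall architecture mirrors the paper's: a conditional/dynamic representation of $Y_n(\lfloor nt\rfloor/n)$ as a control problem on the remaining blocks (the paper does this via Lemma \ref{le:BSDE-timedep} together with Lemma \ref{lem:rep Qt} and the rescaling identity $G_n^{(t_n)}=(1-t_n)G_{n-\lfloor nt\rfloor}$), the same decomposition of the empirical measure into the $\lfloor nt\rfloor$ observed sub-paths plus $L_{n-\lfloor nt\rfloor}$ of the remaining ones, and the law of large numbers sending the observed part to $t\W$. Your liminf bound is fine in spirit, though note that it only gives, for each \emph{fixed} $q\in\L_b$, an a.s.\ inequality; to pass to the supremum over the uncountable family $\L_b$ on a single null set you would need an extra separability argument, which you do not address.

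The genuine gap is exactly where you locate the ``main obstacle'': the upper bound. You propose to rerun the proof of Theorem \ref{th:mainlimit} with the random test functions $\hat F_n$ converging uniformly on weakly compact sets, but that proof rests on the abstract result \cite[Theorem 1.1]{Lac-Sanov} and the iterates $\rho_n^g$, all formulated for a single fixed $F\in C_b(\P(\C))$; making that machinery ``robust'' to an $n$-dependent, $\omega$-dependent functional is not a routine modification and you give no argument for it. The paper avoids this issue entirely: since $\rho^{G_n^{(t_n)}}$ is a convex risk measure, it is $1$-Lipschitz in the supremum norm, so
\[
\Bigl|\rho^{G^{(t_n)}_n}\bigl(F\circ L_n(\omega\otimes_{t_n}W)\bigr)-\rho^{G^{(t_n)}_n}\bigl(F(t\W+(1-t)L_{n-\lfloor nt\rfloor}(W))\bigr)\Bigr|
\le \bigl\|F\circ L_n(\omega\otimes_{t_n}\cdot)-F(t\W+(1-t)L_{n-\lfloor nt\rfloor}(\cdot))\bigr\|_\infty,
\]
and the right-hand side tends to zero when $F$ is \emph{uniformly} continuous. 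This replaces the random functional by the fixed one $\nu\mapsto F(t\W+(1-t)\nu)$, to which Theorem \ref{th:mainlimit} applies verbatim; a small estimate comparing $\rho^{ag}$ and $\rho^{bg}$ then handles the change from $(1-t_n)$ to $(1-t)$ (a rescaling detail your sketch also glosses over), and the case of general $F\in C_b(\P(\C))$ is recovered by monotone approximation with uniformly continuous functions, using Lemma \ref{le:H1-tightness} and Dini's theorem. Without either this Lipschitz device or a proved ``varying-$F$'' version of Theorem \ref{th:mainlimit}, your upper bound does not close.
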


We do not know of any results similar to Theorem \ref{thm: Sanov time} in the literature. The limiting expression exhibits a structure remarkably parallel to the Hopf-Lax-Oleinik solution of a Hamilton-Jacobi equation, reviewed in Section \ref{sec PDE Schilder} below. Another interesting feature is that it is a decidedly non-Markovian result; even if $F$ depends only on the time-$1$ marginal of the measure, the terminal conditional {in \eqref{eq simple BSDE - Sanov}} still depends on the value of $W$ at $n$ different times.

Next, in order to state a BSDE analogue of Theorem \ref{co:Schilder-finaltime}, we make use of the following notation: Define $\C_0[t,1]$ to be the set of continuous paths $\omega : [t,1] \to \R$ with $\omega(t)=0$. Also, for $\omega \in \C$ and $\overline\omega \in \C_0[t,1]$, define $\omega \oplus_t \overline\omega \in \C$ by
\[
\omega \oplus_t \overline\omega(s): ={\omega(s\wedge t)} + \overline\omega(s)1_{[t,1]}(s).
\]
Recall as in \eqref{def:+infty-convention} that we set $\int_t^1g(s,\dot{\omega}(s))ds := \infty$ when $\omega$ is not absolutely continuous.

\begin{theorem}\label{thm HL}
Let $F \in C_b(\C)$ and let $(Y_n,Z_n)$ be the minimal supersolution of the BSDE
\begin{equation}\label{eq simple BSDE}
	dY(t) =- g^*\left(t, \sqrt{n} Z(t)\right)\,dt + Z(t)\,dW(t),\quad  Y(1)=  F\left(\frac{W}{\sqrt{n}}\right ).
\end{equation}
Then there exist progressively measurable functions $u_n : [0,1] \times \C \to \R$ such that $Y_n(t) = u_n(t,W/\sqrt{n})$ a.s.\ for each $n$ and $u_n \to u$ pointwise, where
\[
u(t,\omega)  := \sup_{ \overline\omega \in \C_0[t,1] } \left( F( \omega \oplus_t \overline\omega ) - \int_t^1g(s, \dot{\overline\omega}(s))ds \right).
\]
Moreover, for each $t\in[0,1]$, we have the a.s.\ limit
\begin{align*}
\lim_{n\to\infty}Y_n(t) = u(t,0).
\end{align*}
\end{theorem}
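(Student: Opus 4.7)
The strategy is to exploit the dynamic (flow) structure of the BSDE to reduce the convergence of $Y_n(t)$ to a small-noise (Schilder-type) limit on the shorter interval $[t,1]$, treating the past path $W|_{[0,t]}$ as a deterministic parameter.

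\textbf{Step 1 (deterministic representation of $Y_n(t)$).} First, adapt the BBD representation (Theorem \ref{thm BBD}) and its consequence \eqref{eq rho = Y} to the interval $[t,1]$; the proofs go through essentially verbatim. Combined with the flow/Markov property of the minimal supersolution with respect to the shifted Brownian motion $B(s) := W(s) - W(t)$, $s \in [t,1]$, this yields
\begin{equation*}
Y_n(t) = u_n\bigl(t, W/\sqrt{n}\bigr) \quad \text{almost surely,}
\end{equation*}
where, for a deterministic $\omega \in \C$,
\begin{equation*}
u_n(t, \omega) := \sup_{p} \E\left[F\Bigl(\omega \oplus_t \bigl(V_n + \textstyle\int_t^\cdot p(s)\,ds\bigr)\Bigr) - \int_t^1 g(s, p(s))\,ds\right],
\end{equation*}
with $V_n(s) := (W(s) - W(t))/\sqrt{n}$ the rescaled Brownian increment on $[t,1]$ and the supremum ranging over bounded, progressively measurable processes $p$ on $[t,1]$ (after the substitution $p = q/\sqrt{n}$ as in the derivation of Theorem \ref{co:Schilder-finaltime}). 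Progressive measurability of $u_n$ in $(t,\omega)$ follows from standard selection arguments.

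\textbf{Step 2 (small-noise limit $u_n \to u$).} We adapt the proof strategy of Theorem \ref{co:Schilder-finaltime} to the interval $[t,1]$ with the parametric terminal $\bar\omega \mapsto F(\omega \oplus_t \bar\omega)$. Since $V_n \to 0$ uniformly almost surely, dominated convergence gives, for any fixed bounded progressively measurable $p$ on $[t,1]$, the lower bound $\liminf_n u_n(t, \omega) \geq F\bigl(\omega \oplus_t \int_t^\cdot p\,ds\bigr) - \int_t^1 g(s, p(s))\,ds$, which after mollification of general absolutely continuous paths by Lipschitz ones (and using the integrability condition \eqref{TI: time integrability} to control $\int g$ along the approximation) yields $\liminf_n u_n(t,\omega) \geq u(t, \omega)$. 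The matching upper bound uses coercivity from (TI): $\varepsilon$-optimizers $p_n$ satisfy $\int_t^1 g(s, p_n)\,ds \leq C$ uniformly, so by de la Vall\'ee Poussin their primitives are equicontinuous, hence relatively compact in $\C_0[t,1]$ by Arzel\`a-Ascoli; passing to the limit via boundedness of $F$ and lower semicontinuity of $\bar\omega \mapsto \int_t^1 g(s,\dot{\bar\omega})\,ds$ gives $\limsup_n u_n(t, \omega) \leq u(t, \omega)$.

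\textbf{Step 3 (evaluation at $\omega = W/\sqrt{n}$).} Inspection of the bounds in Step 2 shows that boundedness of $F$ and uniform continuity of $F$ on the compact set of candidate concatenated paths deliver joint equicontinuity of $\omega \mapsto u_n(t, \omega)$ on bounded subsets of $\C$, uniformly in $n$. Combined with the pointwise convergence $u_n(t, 0) \to u(t, 0)$ from Step 2 and the fact that $W/\sqrt{n} \to 0$ uniformly almost surely, this yields
\begin{equation*}
Y_n(t) = u_n(t, W/\sqrt{n}) \longrightarrow u(t, 0) \quad \text{almost surely.}
\end{equation*}

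The main obstacle is Step 1: rigorously justifying the flow-type identification $Y_n(t) = u_n(t, W/\sqrt{n})$ at the level of the \emph{minimal supersolution}, as opposed to a true solution. When $g$ is of superquadratic growth this follows from classical BSDE flow theory (Remark \ref{rem: solution case}); in the general case one likely needs to approximate $g$ by functions admitting unique BSDE solutions and appeal to stability of the minimal supersolution under such approximations.
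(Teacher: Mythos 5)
Your Step 1 is the crux of the theorem and your proposal leaves it unresolved. The identification $Y_n(t)=u_n(t,W/\sqrt{n})$ does \emph{not} follow from classical BSDE flow theory here: under (TI) alone there need not be a true solution, and no Markov/flow property of minimal supersolutions is available at this generality; your suggested repair (approximate $g$ by superquadratic generators, invoke uniqueness and a stability result for minimal supersolutions) is not carried out and is itself a nontrivial project, since stability of minimal supersolutions under generator perturbation is delicate and one would also have to pass the representation to the limit. The paper proves the identification directly: Lemma \ref{lem:rep Qt} (a minor adaptation of \cite{tarpodual}) gives the time-$t$ dual representation $Y(t)=\esssup_{Q\in\Q_t}\E^Q[F(W)-\int_t^1 g(s,q^Q(s))ds\,|\,\F_t]$ for the \emph{minimal supersolution}, and Lemma \ref{le:BSDE-timedep} converts this, via conditional laws, Girsanov densities, Brownian scaling, a time change of variables and a measurable-selection argument, into $Y(t,\omega)=\rho^{g^{(t)}}(F(\omega\otimes_t\cdot))$ with $g^{(t)}(s,q)=(1-t)g(t+s(1-t),q/\sqrt{1-t})$. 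This rescaled form is then fed directly into Theorem \ref{co:Schilder-finaltime} (applied to the function $F(\omega\otimes_t\cdot)\in C_b(\C)$), so your Step 2, while correct in spirit, re-proves on $[t,1]$ what the paper obtains by a change of variables.

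There is a second gap in your Step 3. For merely continuous bounded $F$, the claimed ``joint equicontinuity of $\omega\mapsto u_n(t,\omega)$ on bounded subsets of $\C$, uniformly in $n$'' is unsubstantiated: bounded subsets of $\C$ are not compact, the candidate concatenated paths contain the Brownian perturbation $V_n$ (so they do not lie in a fixed compact set for fixed $n$), and a function in $C_b(\C)$ need not be uniformly continuous on any set containing them. The paper's argument is two-staged precisely for this reason: if $F$ is uniformly continuous, the $1$-Lipschitz property of convex risk measures (\cite[Lemma 4.3]{FS3dr}) gives
\begin{align*}
|Y_n(t,\omega)-Y_n(t,0)| \le \left\|F\left(\tfrac{1}{\sqrt{n}}(\omega\otimes_t\cdot)\right)-F\left(\tfrac{1}{\sqrt{n}}(0\otimes_t\cdot)\right)\right\|_\infty \to 0,
\end{align*}
and for general $F\in C_b(\C)$ one sandwiches $F$ between monotone sequences of uniformly continuous functions; the $\limsup$ direction then requires extracting near-optimal paths with uniformly bounded energy, the compactness Lemma \ref{le:H1-tightness}, and Dini's theorem to conclude. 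Without an argument of this type your Step 3 does not yield the a.s.\ limit $Y_n(t)\to u(t,0)$ beyond uniformly continuous $F$.
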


The previous {theorem} is noteworthy, as it shows that making the generator of the BSDE explode and its terminal condition trivialize at the same rate gives a non-trivial deterministic limit. Alternatively, we may move the rescaling to the Brownian motion itself. Letting $W^\epsilon=\sqrt{\epsilon}W$ denote Brownian motion with volatility $\epsilon=1/n$, we can rewrite \eqref{eq simple BSDE} as
\begin{align}
dY(t) = -g^*(t, Z(t))dt + Z(t) dW^\epsilon(t), \quad  Y(1) =  F(W^\epsilon), \label{def:BSDE-vanishing-noise}
\end{align} 
and so Theorem \ref{thm HL} also shows a non-trivial effect of ``cooling-down'' the driving Brownian motion in such a BSDE. {The closest related results seem to be those of the form of \cite[Theorem 2.1]{rainero2006principe} on (F)BSDEs with vanishing noise, though the factor $\sqrt{n}$ in $g^*$ in \eqref{eq simple BSDE} is absent in \cite{rainero2006principe}.}

\begin{remark}
The $\epsilon \downarrow 0$ limit of the BSDE \eqref{def:BSDE-vanishing-noise} is intriguing from the perspective of BSDE stability theory. It has been known for some time that if the generator and terminal condition of a BSDE converge in a suitable sense, then so does the solution $(Y,Z)$. Modern BSDE theory has explored similar stability theorems in much more generality, when the driving martingale (in our case, $W^\epsilon$) itself can vary (see \cite{papapantoleon2018stability} and the thesis \cite{saplaouras2017backward} for thorough discussions and references). However, existing results in this direction require either that the limiting martingale (in our case, $0$) has the predictable representation property or that the natural filtrations converge in some sense, both of which clearly fail in our case.
\end{remark}

The factor $\sqrt{n}$ appears in the identity $Y_n(t)=u_n(t,W/\sqrt{n})$ in Theorem \ref{thm HL} for two reasons. On a purely mathematical level, this provides the scaling that results in a random ($\omega$-dependent) limit for $u_n$. The second and more practical reason is that one can interpret $u(t,\omega)$ as the value function of a stochastic control problem in which the state process, $W/\sqrt{n}$, is observed up to time $t$ to agree with the path $(\omega(s))_{s \le t}$. This will be perhaps more clear when we reinterpret Theorem \ref{thm HL} in terms of PDEs in Section \ref{sec PDE Schilder} below.

In the quadratic case $g(t,q)=\frac 12 |q|^2$, Theorem \ref{thm HL} reads as a ``conditional'' version of Schilder's theorem for Brownian motion. Indeed, the solution of BSDE \eqref{eq simple BSDE} and its a.s.\ limit in Theorem \ref{thm HL} are given by
\begin{align*}
Y_n(t) &= \frac{1}{n}\log\E[\exp(nF(W/\sqrt{n})) \, | \, \F_t] \\
	&\rightarrow  \sup_{\omega \in \C_0[t,1]}\left( F( 0 \oplus_t \omega ) - \frac 12\int_t^1 |\dot{\omega}(s)|^2ds \right).
\end{align*}
Of course, it is straightforward to derive this directly from the usual form of Schilder's theorem. Similarly, in the quadratic case, Theorem \ref{thm: Sanov time} can be rewritten as the a.s.\ limit
\begin{align*}
\lim_{n\rightarrow\infty}\frac{1}{n}\log\E\left[\left.\exp\left(nF\left(\frac{1}{n}\sum_{k=1}^n\delta_{W_{(n,k)}}\right )\right) \, \right| \F_{\frac{\lfloor nt \rfloor}{n}}\right] = \sup_{Q \in \Q}(F(t\W + (1-t)Q) + (1-t)H(Q | \W)),
\end{align*}
for each $t \in [0,1]$. It is likely that a direct argument would yield in this case that the same holds even if we replace $\frac{\lfloor nt \rfloor}{n}$ with $t$ on the left-hand side. More generally, we conjecture that $Y_n(t)$ converges to the same limit as $Y_n\left(\frac{\lfloor nt \rfloor}{n}\right)$ in the setting of Theorem \ref{thm: Sanov time}.

\subsection{Small noise limit of Schr\"odinger-type problems}
\label{sec:schilder.to.schroe}
{
It was established by \citet{Mi04} that classical quadratic optimal transport is the small-noise limit of so-called Schr\"odinger problems. This was then extended by \citet{MiTh08} to non-quadratic situations. In this case, optimal transport is obtained as a small-noise limit of a \emph{stochastic transport problem}. This latter stochastic variant can be interpreted as a non-exponential, Schr\"odinger-type problem. The method employed by the authors relies on PDE techniques and the given Brownian setting that they propose. On the other hand, \citet{Le12} extended these considerations to a non-Brownian setting by employing large deviations arguments instead of PDEs; his is therefore a fully probabilistic approach, which was further developed in \cite{Le14,Le16}. However, the approach of L\'eonard, when applied in the aforementioned Brownian setting of Mikami-Theullien, can only cover the quadratic case.

In this part we aim to deepen the study of optimal transport as a small-noise limit of stochastic optimal transport. Let us present our setting and main result. After this has been done, we will comment more on how it connects to priori literature.

For $\epsilon>0$ we introduce the set {$\P^*_\epsilon(\C)$ of $Q \in \P(\C)$ for which there exists a progressively measurable $\R^d$-valued processes $q^Q$ such that the process 
\[
\frac 1{ \sqrt{\epsilon}}\left(W(t) - W(0)-\int_0^t q^Q(s)ds\right)
\]
is a standard $d$-dimensional Brownian motion under $Q$.} We stress that for $Q\in \mathcal{P}^*_\epsilon(\C)$ the process $q^Q$ is uniquely determined (in the $dt\otimes dQ$-a.s.\ sense), and that it is understood in the above definition that $q^Q$ is $dt$-integrable $Q$-a.s.
We denote by $Q^t$ the marginal at time $t$ of a path measure $Q\in\mathcal{P}(\C)$, and by $\pi^i$ the $i$-th marginal of a measure $\pi\in \mathcal{P}(\R^d\times \R^d)$. 

We now introduce the problems of interest in this part of the work. Let $Z$ be a separable Banach space, which we endow with its Borel sigma-algebra. We are given an \emph{observable} $H$, which is nothing more than a continuous linear operator
$$H:\C\to Z\, .$$
For $\mu$ and $\nu$ Borel probability measures respectively on $\R^d$ and $Z$, we examine here the problems
\begin{align}
\inf_{\substack{Q\in\mathcal{P}^*_\epsilon(\C) \\ Q^0=\mu,\,Q\circ H^{-1}=\nu}} E^Q\left [ \int_0^1 g\left (t,q^Q(t)\right )dt \right ], \label{eq path space with H}
\end{align}
and their limits when $\epsilon \downarrow 0$.

We stress that taking the \emph{classical observable}, namely $$Z=\R^d\text{ and }H(\omega)=\omega(1),$$  corresponds to the situation of Schr\"odinger problems, modulo the fact that $g$ need not be the quadratic function, see e.g.\ the survey \citet{Le14}. In general we think of $H$ as an \emph{observable random quantity} whose distribution $\nu$ we know, and impose in advance into the problem. For instance, $H$ could give the value of a path at different time points, as well as the value of successive integrals of the path.

We now state our first main result, which relies fundamentally on our Schilder-type result Theorem \ref{co:Schilder-finaltime}. {In the following, let $\W_\epsilon$ denote Wiener measure with volatility $\epsilon$, i.e., $\W_\epsilon = \W \circ (\sqrt{\epsilon}W)^{-1}$.}
Notice that in the quadratic case, the optimization problem \eqref{eq path space with H} reduces to an optimization over absolutely continuous probability measures.
Proofs of the results announced in this section are given in Section \ref{sec Schroedinger details}.

\begin{corollary}\label{coro Schroedinger}
Let $\mu\in \mathcal{P}(\R^d)$ and $\nu\in \mathcal{P}(Z)$, and let $H : \C \rightarrow Z$ be linear and continuous. Let $\nu_\epsilon : = \nu * \left (\, P_\epsilon\circ H^{-1} \right )$ denote
the convolution of $\nu$ with the push-forward by $H$ of $\W_\epsilon$. Then\footnote{We take the convention that infimum over an empty set equals $+\infty$.} 
\begin{align}
\lim_{\epsilon\downarrow 0}\, \inf_{\substack{Q\in\mathcal{P}^*_\epsilon(\C) \\ Q^0=\mu,\,Q\circ H^{-1}=\nu_\epsilon}} \E^Q\left [ \int_0^1 g\left (t,q^Q(t)\right )dt \right ] & = \inf_{\substack{Q\in\mathcal{P}(\C)\\ Q^0=\mu,\,Q\circ H^{-1}=\nu}} \E^Q\left [ \int_0^1 g\left (t,\dot{W}(t)\right )dt \right ]\label{eqMikamiLeo1}.
\end{align}
 Furthermore, we have:
 \begin{itemize}
 \item The problem 
\begin{align}\inf_{\substack{Q\in\mathcal{P}^*_\epsilon(\C) \\ Q^0=\mu,\,Q\circ H^{-1}=\nu_\epsilon}} \E^Q\left [ \int_0^1 g\left (t,q^Q(t)\right )dt \right ],\label{eq prob lhs eps}
\end{align} has an optimizer as soon as $\{Q\in \mathcal{P}^*_\epsilon(\C):\, Q^0=\mu,\,Q\circ H^{-1}=\nu_\epsilon\}\neq \emptyset$. Analogously,  
\begin{align}\inf_{\substack{Q\in\mathcal{P}(\C)\\ Q^0=\mu,\,Q\circ H^{-1}=\nu}} \E^Q\left [ \int_0^1 g\left (t,\dot{W}(t)\right )dt \right ],\label{eq prob rhs 0}
\end{align}
 has an optimizer as soon as  $\{Q\in \mathcal{P}(\C):\, Q^0=\mu,\,Q\circ H^{-1}=\nu\}\neq \emptyset$.
 \item If for all $\epsilon>0$ small, an optimizer $Q_\epsilon$ of \eqref{eq prob lhs eps} exists, then any cluster point of $\{Q_\epsilon\}_\epsilon$ is an optimizer of \eqref{eq prob rhs 0}. In particular, if the latter problem has a unique optimizer, then any cluster point of $\{Q_\epsilon\}_\epsilon$ is equal to it.
 \end{itemize} 
\end{corollary}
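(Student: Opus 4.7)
Write $V(\epsilon)$ and $V(0)$ for the infima on the left- and right-hand sides of \eqref{eqMikamiLeo1} respectively, and set $c(\omega) := \int_0^1 g(t, \dot\omega(t))\,dt$ with the convention \eqref{def:+infty-convention}. The plan is to establish $\lim_{\epsilon \downarrow 0} V(\epsilon) = V(0)$ by matching $\limsup$ and $\liminf$ inequalities, with existence and stability of optimizers then following from standard compactness and lower semicontinuity arguments.

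For the upper bound $\limsup_\epsilon V(\epsilon) \le V(0)$, I would argue by explicit construction. Given any $Q$ feasible for \eqref{eq prob rhs 0} with $\E^Q[c] < \infty$ (hence the canonical process is $Q$-a.s.\ absolutely continuous), enlarge the probability space to carry a Brownian motion $B$ independent of the canonical process under $Q$, and let $Q_\epsilon$ denote the law of $W + \sqrt{\epsilon}\,B$. A direct check shows $Q_\epsilon \in \mathcal{P}^*_\epsilon(\C)$ with $q^{Q_\epsilon}$ equal to the optional projection of $\dot W$ onto the natural filtration of $W + \sqrt{\epsilon} B$; that $Q_\epsilon^0 = \mu$; and that $Q_\epsilon \circ H^{-1} = (Q \circ H^{-1}) * (\W_\epsilon \circ H^{-1}) = \nu_\epsilon$ by linearity of $H$ and independence. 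Jensen's inequality applied to the convex $g(t,\cdot)$ yields $\E^{Q_\epsilon}\bigl[\int_0^1 g(t, q^{Q_\epsilon}(t))dt\bigr] \le \E^Q[c]$. Infimizing over $Q$ gives $V(\epsilon) \le V(0)$.

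For the lower bound $\liminf_\epsilon V(\epsilon) \ge V(0)$, the plan is to combine Theorem \ref{co:Schilder-finaltime} with a Fenchel--Moreau biconjugation. Pick $\epsilon_k \downarrow 0$ and near-optimizers $Q_k$ with $\E^{Q_k}\bigl[\int_0^1 g(t, q^{Q_k}(t))dt\bigr] = V(\epsilon_k) + o(1) \to \liminf_\epsilon V(\epsilon)$, assumed finite. The coercivity $g(t,q)/|q| \to \infty$ in (TI), together with the cost bound and de la Vall\'ee-Poussin, yields uniform integrability of the drifts $\{q^{Q_k}\}$ under $dt \otimes dQ_k$; combined with the vanishing noise $\sqrt{\epsilon_k}$, this gives tightness of $\{Q_k\}$ in $\mathcal{P}(\C)$. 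Any subsequential weak limit $Q^*$ satisfies $Q^{*,0} = \mu$ and $Q^* \circ H^{-1} = \lim_k \nu_{\epsilon_k} = \nu$, hence is feasible for \eqref{eq prob rhs 0}. For the crucial lower semicontinuity, push forward via $\omega \mapsto \omega - \omega(0)$ to get $\tilde Q_k, \tilde Q^* \in \mathcal{P}(\C_0)$, noting that $\tilde Q_k \in \mathcal{P}^*_{\epsilon_k}(\C)$ has $\tilde Q_k^0 = \delta_0$ with the same drift as $Q_k$. Theorem \ref{co:Schilder-finaltime} together with the BBD representation \eqref{eq BBD} (via the change of variables $\epsilon = 1/n$) yields, for every $\tilde F \in C_b(\C_0)$,
\[
\lim_{\epsilon \downarrow 0} \sup_{\substack{\tilde Q \in \mathcal{P}^*_\epsilon(\C) \\ \tilde Q^0 = \delta_0}} \Bigl(\E^{\tilde Q}[\tilde F] - \E^{\tilde Q}\Bigl[\int_0^1 g(t, q^{\tilde Q}(t))dt\Bigr]\Bigr) = \sup_{\omega \in \C_0}\bigl(\tilde F(\omega) - c(\omega)\bigr).
\]
The defining inequality of this supremum applied to $\tilde Q_k$, followed by passing to the liminf in $k$, then taking supremum over $\tilde F \in C_b(\C_0)$ and invoking Fenchel--Moreau biconjugation for the linear, proper, lsc functional $\tilde Q' \mapsto \E^{\tilde Q'}[c]$ on $\mathcal{P}(\C_0)$ (the lsc of which follows from the classical lower semicontinuity of $c$ on $\C_0$ for convex superlinear integrands), yields
\[
\liminf_k V(\epsilon_k) \ge \E^{\tilde Q^*}[c] = \E^{Q^*}[c] \ge V(0),
\]
using translation invariance of $c$.

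Existence of optimizers in \eqref{eq prob lhs eps} and \eqref{eq prob rhs 0} then follows from the same tightness and lsc ingredients applied to a minimizing sequence at fixed $\epsilon \ge 0$, and the stability claim drops out of the $\liminf$ argument above. The hard part will be the lower semicontinuity step: it requires our Schilder-type theorem precisely to bridge the gap between the ``drift-only'' cost in the $\epsilon$-Schr\"odinger problem and the ``derivative'' cost $c$ in the optimal transport problem, and the Fenchel--Moreau biconjugation must be carefully justified on $\mathcal{P}(\C_0)$ in order to convert the family of dual inequalities into a pointwise lower bound on $\E^{Q^*}[c]$.
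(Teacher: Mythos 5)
Your overall architecture for \eqref{eqMikamiLeo1} is sound and close in spirit to the paper's: the paper also proves a $\Gamma$-convergence statement, with exactly your recovery sequence $Q*\W_\epsilon$ (linearity of $H$ giving the marginal $\nu_\epsilon$) for the upper bound, and equicoercivity from the superlinearity in (TI) for tightness of near-optimizers. Where you genuinely diverge is the $\liminf$/lower-semicontinuity step. The paper introduces functionals $\tilde\alpha^\mu_\epsilon$, $\tilde\rho^\mu_\epsilon$ with general initial law $\mu$, proves an initial-condition version of the Schilder theorem (Proposition \ref{th:Schilder-initial}, via disintegration and measurable selection), and then combines the conjugacy $\tilde\alpha^\mu_\epsilon(Q)=\sup_{F\in C_b}(\E^Q[F]-\tilde\rho^\mu_\epsilon(F))$ with Kantorovich duality to recover $\alpha^\mu_0(Q)$ and the constraint $Q^0=\mu$ in one stroke. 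You instead center paths at the origin, handle the initial marginal separately by weak convergence, and replace Kantorovich duality by a biconjugation (or monotone approximation) against the classical lower semicontinuous action functional on $\C_0$. That variant is legitimate and arguably more elementary (no disintegration over starting points, no Kantorovich duality), at the cost of exploiting translation invariance of the cost and an extra projection step when centering.

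There are, however, concrete gaps. First, your displayed identity $\lim_{\epsilon\downarrow 0}\sup_{\tilde Q\in\mathcal P^*_\epsilon,\ \tilde Q^0=\delta_0}(\E^{\tilde Q}[\tilde F]-\E^{\tilde Q}[\int_0^1 g(t,q^{\tilde Q}(t))dt])=\sup_{\omega\in\C_0}(\tilde F(\omega)-c(\omega))$ does \emph{not} follow from Theorem \ref{co:Schilder-finaltime} plus \eqref{eq BBD} alone: those give the limit of the supremum over measures absolutely continuous with respect to the (rescaled) Wiener measure, whereas your $Q_k$ live in $\mathcal P^*_{\epsilon_k}$ and may be singular to it. You need the coincidence of the two suprema for bounded continuous test functions, i.e.\ the approximation of a $\mathcal P^*$-measure by absolutely continuous ones with truncated drifts (the paper's Lemma \ref{lem concidence}, extended in Lemma \ref{thm technical gen}); without this the key upper bound on $\tilde\rho^{\delta_0}_{\epsilon_k}(\tilde F)$ is unjustified. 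Second, after centering, the pushforward $\tilde Q_k$ does \emph{not} have ``the same drift as $Q_k$'': the drift of $Q_k$ may depend on $W(0)$, so the drift of $\tilde Q_k$ is its optional projection onto the smaller filtration, and you must invoke Jensen to see that the cost does not increase (this is the right direction for your inequality, so it is a fixable misstatement, but as written it is wrong). Third, the existence of optimizers of \eqref{eq prob lhs eps} at \emph{fixed} $\epsilon>0$ does not follow from ``the same tightness and lsc ingredients'': your duality argument only produces a lower bound by the $\epsilon=0$ cost of the limit, not lower semicontinuity of the fixed-$\epsilon$ functional $Q\mapsto\E^Q[\int_0^1 g(t,q^Q(t))dt]$ on $\mathcal P^*_\epsilon$ together with closedness of $\mathcal P^*_\epsilon$ under weak limits with bounded cost. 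That is a separate argument (the paper's Lemma \ref{thm technical gen}, proved via the martingale/innovation argument of Lemma \ref{thm technical}), and it needs to be supplied for the second bullet of the corollary.
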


This result is close to \cite[Theorem 3.2]{MiTh08}, except for the important facts: We can handle here a time-dependent function $g$ with nearly no regularity assumptions, and we take a rather general observable $H$ rather than just the classical one $H(\omega)=\omega(1)$. 

We stress that introducing the \emph{mollified measures} $\nu_\epsilon$ is in general unavoidable. For instance, in the quadratic case, with $H(\omega)=\omega(1)$ and when $\mu$ and $\nu$ are discrete, the value in the l.h.s.\ of \eqref{eqMikamiLeo1} is $+\infty$ whereas the right-hand side could very well be finite. In our opinion this mollification unnecessarily blurs the elegant story ``stochastic transport converges to optimal transport.'' As \cite[Proposition 2.1]{Mi04} explains in the quadratic case with the classical observable, the PDE-based approach may still work without such mollification at the expense of more restrictive assumptions. Our second main result on the matter is to show that when $g$ is strictly less than quadratic, then the mollification can be fully avoided without any further assumptions.

\begin{corollary}\label{coro g sub quadratic}
Let us assume that $g(t,q)=g(q)$ and consider the classical case $$Z=\R^d \text{ and }H(\omega)=\omega(1).$$ Assume the existence of  the following limit for some $r\in(0,2)$:
$$\lim_{|q|\to\infty}\frac{g(q)}{|q|^r}\in (0,+\infty).$$
 Then all the conclusions of Corollary \ref{coro Schroedinger} are valid when we take $\nu_\epsilon\equiv\nu$ for all $\epsilon$.
\end{corollary}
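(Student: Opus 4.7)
My plan is to bracket the limit by matching upper and lower bounds on $V_\epsilon:=$ LHS of \eqref{eqMikamiLeo1} (with $\nu_\epsilon$ replaced by $\nu$) and $V_0:=$ RHS. The lower bound $V_0 \leq \liminf_\epsilon V_\epsilon$ proceeds exactly as in Corollary \ref{coro Schroedinger}, since the mollification $\nu_\epsilon$ is never invoked in that half of the argument. Taking an almost-minimising sequence $(Q_\epsilon)$, coercivity of $g$ (which, combined with the sub-quadratic hypothesis, forces $r>1$) gives tightness on $\C$ via standard H\"older estimates on the drift, any weak cluster point $Q_0$ inherits the marginals $\mu$ and $\nu$, and $Q \mapsto \E^Q[\int_0^1 g(\dot W)\,dt]$ is weakly lower-semicontinuous via the improved functional $\tilde\alpha^g$ of Section \ref{se:compactness-alpha}.

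The upper bound $\limsup_\epsilon V_\epsilon \leq V_0$ is the crux and uses a direct construction. Given any feasible $Q^* \in \P(\C)$ for $V_0$ with finite cost $C^* := \E^{Q^*}[\int_0^1 g(\dot W)\,dt]$ (so paths are $Q^*$-a.s.\ absolutely continuous), I realise on an enlarged space $X \sim Q^*$ together with an independent standard Brownian bridge $\tilde B$ from $0$ to $0$ on $[0,1]$, and set
\[
W_\epsilon(t) := X(t) + \sqrt{\epsilon}\,\tilde B(t), \qquad Q_\epsilon := \mathrm{Law}(W_\epsilon).
\]
The marginals are correct for free: $W_\epsilon(0)=X(0)\sim \mu$ and $W_\epsilon(1)=X(1)\sim \nu$. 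Since $[W_\epsilon]_t = \epsilon t$, L\'evy's characterisation gives that the martingale part of $W_\epsilon$ in its own filtration is $\sqrt{\epsilon}$ times a standard Brownian motion, so $Q_\epsilon \in \P^*_\epsilon(\C)$; the drift $q^{Q_\epsilon}$ is the predictable projection onto the natural filtration of $W_\epsilon$ of the drift $\alpha_\epsilon(t) := \dot X(t) - \sqrt{\epsilon}\,\tilde B(t)/(1-t)$ of $W_\epsilon$ in the enlarged filtration (using the Brownian-bridge SDE $d\tilde B = -\tilde B(t)/(1-t)\,dt + d\beta$).

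Jensen's inequality then gives
\[
\E^{Q_\epsilon}\int_0^1 g\!\left(q^{Q_\epsilon}(t)\right) dt \;\leq\; \E\int_0^1 g\!\left(\alpha_\epsilon(t)\right) dt.
\]
The sub-quadratic growth $g(q) \leq C(1+|q|^r)$ combined with the Gaussian moment estimate $\E|\tilde B(t)|^r \lesssim (t(1-t))^{r/2}$ shows that the Brownian-bridge correction contributes at most $C\epsilon^{r/2}\int_0^1 t^{r/2}(1-t)^{-r/2}\,dt$, which is finite \emph{precisely because} $r<2$ and vanishes as $\epsilon\downarrow 0$. Dominated convergence (with integrable dominator $C(1+2^r|\dot X(t)|^r + 2^r|\tilde B(t)|^r/(1-t)^r)$, whose first term is integrable by the matching sub-quadratic lower bound $g(q) \geq c|q|^r-C'$ and $C^*<\infty$) then produces $\E\int_0^1 g(\alpha_\epsilon)\,dt \to C^*$. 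Infimising over $Q^*$ concludes $\limsup_\epsilon V_\epsilon \leq V_0$.

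Existence of optimisers and convergence of cluster points then follow from these two bounds combined with the lower-semicontinuity recalled above, exactly as in Corollary \ref{coro Schroedinger}. The main technical obstacle I anticipate is the rigorous verification that $Q_\epsilon \in \P^*_\epsilon(\C)$ and the identification of $q^{Q_\epsilon}$ as the predictable projection of $\alpha_\epsilon$, for which one must be careful with the filtration-shrinking step. The role of sub-quadraticity is entirely transparent: the bridge-correction drift $\sqrt{\epsilon}\tilde B(t)/(1-t)$ is of order $\sqrt{\epsilon/(1-t)}$ near $t=1$, and the $L^r$-cost of this singularity is integrable on $[0,1]$ if and only if $r<2$.
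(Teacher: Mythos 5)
Your argument is correct, and it rests on the same two pillars as the paper's proof---a drift-projection/Jensen step and the $L^r$-integrability of the Brownian-bridge drift singularity $(1-s)^{-1}$ near the terminal time, which is precisely where $r<2$ enters---but your recovery sequence for the upper bound is genuinely different. The paper sets $\delta=\sqrt{\epsilon}$, takes $\tilde Q_{\epsilon,\delta}$ equal to $Q*\W_\epsilon$ on $[0,1-\delta]$, and glues in a mixture of conditional Brownian bridges on $[1-\delta,1]$ pinned so that the time-$1$ marginal is exactly $\nu$; the cost then splits into a convolution part (handled by Jensen as in Lemma \ref{lem convolution}) and a bridge part $A^\mu_{\epsilon,[1-\delta,1]}$ estimated via Lemma \ref{lem technical bridge}. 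You instead perturb the whole path by an independent $\sqrt{\epsilon}$-scaled bridge, $W_\epsilon=X+\sqrt{\epsilon}\,\tilde B$, which preserves both endpoint marginals for free and bounds the cost in one stroke by $\E\int_0^1 g\bigl(\dot X(t)-\sqrt{\epsilon}\,\tilde B(t)/(1-t)\bigr)dt$, concluded by dominated convergence; this is cleaner and avoids the time-splitting bookkeeping, at the price of carrying out yourself the filtration-shrinking (innovations) identification of $q^{Q_\epsilon}$ with an \emph{unbounded} drift---which does go through, since (TI) forces $r>1$, hence $\E\int_0^1|\dot X(t)|\,dt<\infty$ and the bridge drift is integrable, and it is the same projection argument the paper uses in the proofs of Theorem \ref{thm BBD} and Lemma \ref{lem concidence} via \cite[Exercise (5.15)]{Revuz1999}. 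Two minor points to tighten: for the lower bound what is really invoked is the $\Gamma$-liminf inequality of Lemma \ref{lem convolution}(i) for the family $\tilde{\alpha}^\mu_\epsilon$ together with the equicoercivity of Lemma \ref{lem tightness mikami}, not merely lower semicontinuity of the single functional $\tilde{\alpha}^g$ (your deferral to the proof of Corollary \ref{coro Schroedinger} is nevertheless exactly what the paper does); and your dominated-convergence step tacitly uses that $g$ is finite-valued, hence continuous, on all of $\R^d$, which indeed follows from the assumed finite nonzero limit of $g(q)/|q|^r$.
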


The curious reader may wonder why the need of $g$ being strictly less than quadratic. The reason is that we make crucial use of Brownian bridges in the proof, and that these measures have non-square-integrable drifts.\\

Let us finally summarize, and classify, our contribution into the subject:
\begin{itemize}
\item \emph{Methodological}: We take inspiration in L\'eonard's fully probabilistic approach of \cite{Le12}, and crucially use our generalized Schilder-type result (Theorem \ref{co:Schilder-finaltime}) in order to study the problem.
\item \emph{Technical}: We can handle a time-dependent function $g$ under almost no regularity assumptions. This is a first advantage of not working with PDEs.
\item \emph{Novelty}: We are able to consider in the definition of the problem quite general \emph{observables}. This is, in our opinion, out of the scope of PDE methods. Our second finding, explained in Corollary \ref{coro g sub quadratic}, provides a much more transparent form to the statement that ``optimal transport is a small-noise limit of stochastic optimal transport,'' since we can avoid the {pre-limit mollification} at no regularity cost.
\end{itemize}

{
The reader who is mostly interested in these results on the Schr\"odinger-type problem, may directly go to Section \ref{sec Schroedinger details}. All technical prerequisites are covered in Section \ref{sec extension of results}.}

\subsection{Connections with PDEs}
\label{sec:lim thm pde}
In this subsection we specialize the limit theorems to functions $F$ on $\P(\C)$ (resp. $\C$) which depend only on the time-$1$ marginal of the measure (resp. the time-$1$ value of the path).
In this case, the so-called \emph{nonlinear Feynman-Kac formula} (see the recent book \cite[Section 5.1.3]{zhangbook} for a typical case) allows to reinterpret the BSDE results of Section \ref{sec: translation BSDEs} in terms of semilinear parabolic partial differential equations.

\subsubsection{A PDE form of Theorem \ref{thm HL}}
\label{sec PDE Schilder}
As a first special case, suppose the function $F$ in Theorem \ref{thm HL} depends only on the final value of the path; that is, $F(w)=f(w(1))$ for all $w \in \C$, for some $f \in C_b(\R^d)$. Then, according to \cite[Theorem 5.2]{Drap-Main16}, we can write $Y_n(t) = v_n(t,W(t))$, where $v_n : [0,1] \times \R^d \rightarrow \R$ solves (i.e.\ is the minimal viscosity supersolution of) the PDE
\begin{equation}
		\begin{cases}
			\partial_tv_n(t,x) + \frac 12 \Delta v_n(t,x) + g^*(t, \frac{1}{\sqrt{n}}\nabla v_n(t,x)) = 0 \quad \text{on } [0,1]\times \R^d\\
			 v_n(1, x) = f(\frac{x}{\sqrt{n}}), \quad \text{for } x\in \R^d,
		\end{cases}
\end{equation}
where the gradient and Laplacian operators act on the $x$ variable. Alternatively, defining $u_n(t,x) = v_n(t,\sqrt{n}x)$, we find that $u_n$ should solve the PDE
\begin{equation*}
		\begin{cases}
			\partial_tu_n(t,x) + \frac{1}{2n} \Delta u_n(t,x) + g^*(t, \nabla u_n(t,x)) = 0 \quad \text{on } [0,1]\times \R^d\\
			 u_n(1, x) = f(x), \quad \text{for } x\in \R^d.
		\end{cases}
\end{equation*}
In this PDE, the factor $n$ appears only in the denominator of the diffusion coefficient, and as $n\rightarrow\infty$ we expect $u_n$ to converge to the solution $u$ of the first-order  PDE
\begin{equation}\label{eq HJ u}
		\begin{cases}
			\partial_tu(t,x) + g^*(t, \nabla u(t,x)) = 0 \quad \text{on } [0,1]\times \R^d\\
			 u(1, x) = f(x), \quad \text{for } x\in \R^d.
		\end{cases}
\end{equation}
If $g(t,x)=g(x)$ is time-independent, the solution should be given by the Hopf-Lax-Oleinik formula,
\begin{equation}
\label{eq:hopf lax form}
u(t,x) = \sup_{y \in \R^d}\left(f(y) - (1-t)g\left(\frac{y-x}{1-t}\right)\right).
\end{equation}
We then obtain
\[
\lim_{n\rightarrow\infty}v_n(0,0) = \lim_{n\rightarrow\infty}u_n(0,0) = u(0,0) = \sup_{x \in \R^d}(f(x)-g(x)),
\]
which agrees with the limiting expressions Theorems \ref{co:Schilder-finaltime} and \ref{thm HL}. We will expand and formalize these heuristics in Proposition \ref{prop:hopf-lax} below. {Noting that $Y_n(t,\omega) = u_n(t,\omega(t)/\sqrt{n})$, this explains the choice of scaling in the first claimed limit of Theorem \ref{thm HL}.}

\subsubsection{Path-dependent PDEs}  
It is tempting to search for a PDE formulation of Theorem \ref{thm HL}, analogous to the discussion in Section \ref{sec PDE Schilder}. Indeed, the quantity $u_n(t,\omega)$ in Theorem \ref{thm HL} can be viewed as the value function of a stochastic control problem with a path-dependent objective functional, and Theorem \ref{thm HL} identifies the limiting function $u(t,\omega)$ as itself the value of a deterministic control problem. In analogy with Section \ref{sec PDE Schilder}, we speculate that Theorem \ref{thm HL} could be rewritten as a vanishing viscosity limit of path-dependent Hamilton-Jacobi-Bellman equations, but this is beyond the scope of this paper. Refer to \cite{lukoyanov2007viscosity,EKTZ2014,bayraktar2018path}
 and the references therein for relevant literature on path-dependent PDEs and particularly to \cite{ma2016large} where a connection with large deviations appears.

\subsubsection{A PDE form of Theorem \ref{thm: Sanov time}}\label{sex PDE Sanov}
In the general context of Theorem \ref{thm: Sanov time}, when $F \in C_b(\P(\C))$ depends on the whole path, the BSDE of Theorem \ref{thm: Sanov time} cannot be expressed using PDEs. However, when $F$ depends only on the marginal law at the final time, i.e., $F\ = F(m(1))$ for some $F \in C_b(\P(\R^d))$, a different PDE representation is available. The terminal condition in the BSDE of Theorem \ref{thm: Sanov time} becomes
\[
F\left(\frac{1}{n}\sum_{k=1}^n\delta_{W_{(n,k)}(1)}\right) = F\left(\frac{1}{n}\sum_{k=1}^n\delta_{ \sqrt{n}(W(k/n) - W((k-1)/n)) }\right).
\]
This  terminal condition depends on the path of $W$ only through the values of $W(t)$ at the finitely many time points $t=1/n,2/n,\ldots,1$. Hence, the BSDE of Theorem \ref{thm: Sanov time} can be seen as a concatenation of $n$ Markovian BSDEs, each of which can be represented by a PDE.

More details will be given in Section \ref{sec BSDE}, specifically in Proposition \ref{pr:PDEiteration}, but let us briefly summarize the idea.
Define an operator $\mathbb L_n$, taking lower semicontinuous lower bounded functions of $(\R^d)^n$ to lower semicontinuous lower bounded functions of $(\R^d)^{n-1}$, as follows: Given $f:(\R^d)^n \to \R$ and $(x_1,\ldots,x_{n-1}) \in (\R^d)^{n-1}$, we define $\mathbb L_nf(x_1,\dots,x_{n-1}) := v(0,0)$, where $v=v(t,x)$ is the the minimal viscosity supersolution of the PDE
\begin{equation*}
		\begin{cases}
			\partial_t v(t,x) + \frac 12 \Delta v(t,x) + g^*(t,\nabla v(t,x)) = 0 \quad \text{on } [0,1]\times \R^{d}\\
			 v(1, x) = f(x_1,\ldots,x_{n-1},x), \quad \text{for } x\in \R^d.
		\end{cases}
\end{equation*}
For $n=1$, we interpret $\mathbb L_1$ as mapping from functions of $\R^d$ to real numbers. The composition $\mathbb L_1\cdots\mathbb L_{n-1}\mathbb L_n$ then maps a function of $(\R^d)^n$ to a real number. For $F \in C_b(\P(\R^d))$, define $F^n : (\R^d)^n \rightarrow \R$ by
\[
F^n(x_1,\dots,x_n):= F\left( \frac{1}{n}\sum_{i=1}^n\delta_{x_i} \right)
\]
Then we have $\frac{1}{n}\mathbb L_1\cdots\mathbb L_{n-1}\mathbb L_n(nF^n) = Y_n(0)$, where $Y_n$ is as in Theorem \ref{thm: Sanov time}.

\subsection{Approximation schemes for (mean field) optimal control problems}\label{sec: mf control}

Interpreting the quantity $\rho^g(F)$ as well as the limiting expressions of Section 
\ref{sec: limit thm} as the values of optimal control problems suggests certain numerical schemes, for both mean field stochastic control problems and non-convex deterministic optimal control problems. {We stress that by allowing the function $g$ to be $+\infty$-valued, we can induce pointwise control constraints in these problems.}

\subsubsection{Mean field stochastic optimal control}

The limiting quantity in Theorem \ref{th:mainlimit}, or in  Theorem \ref{thm: Sanov time}, is a stochastic optimal control problem of mean field type. Indeed, one may express this limit quantity as
\begin{align}
\sup_{q \in \L_b} \left\{ F\left(\W \circ (X^q)^{-1}\right) - \E\left[\int_0^1g(t,q(t))dt\right] \right\}. \label{def:MFcontrol}
\end{align}
where we define 
\begin{align}
X^q(t) = \int_0^tq(s)ds + W(t). \label{def:MFcontrol-X}
\end{align}
This kind of optimization problem has been the subject of active research in recent years, with most of the literature focused on solution techniques, using either maximum principles \cite{AnderssonDjehiche,CarmonaDelarue-MVcontrol} or infinite-dimensional Hamilton-Jacobi-Bellman equations \cite{PhamWei-MVcontrol,LaurierePironneau}. Often, in this literature, the function $g$ or the coefficients of the SDE for $X$ may depend additionally on $X$ and even its law. In this sense, we encounter in this paper only  a rather special type of mean field control problem, but one which nonetheless includes many noteworthy examples, such as mean-variance optimization problems.

A mean field control problem such as \eqref{def:MFcontrol} arises heuristically as an $n\rightarrow\infty$ (mean field) limit of an optimal control problem consisting of $n$ state processes, described loosely as follows:
\begin{align}
\sup_{(q_1,\ldots,q_n)}\E\left[F\left(\frac{1}{n}\sum_{k=1}^n\delta_{X_k}\right) - \frac{1}{n}\sum_{k=1}^n\int_0^1g(t,q_k(t))dt\right], \label{def:MFcontrol-particleapprox}
\end{align}
where the supremum is over progressively measurable square-integrable processes $q_k$, with the state processes $X_k$ defined by
\[
X_k(t) = \int_0^tq_k(s)ds + W_k(t),
\]
for independent Brownian motions $W_1,\ldots,W_n$ defined on some probability space. The optimal value in \eqref{def:MFcontrol-particleapprox} should converge to the optimal value in \eqref{def:MFcontrol}, as was rigorously justified only recently in \cite{Lacker-MVcontrol}, at least for certain functions $F$.
The $n$-particle control problem \eqref{def:MFcontrol-particleapprox} is arguably more amenable to numerical approximation than the mean field counterpart \eqref{def:MFcontrol}, as (finite-dimensional) dynamic programming and PDE methods are available for the former.

Interestingly, our Theorem \ref{th:mainlimit} provides an alternative approximation for \eqref{def:MFcontrol} which could presumably be the basis for a numerical scheme. In particular, the pre-limit expression in Theorem \ref{th:mainlimit} can be written as the value of a stochastic control problem:
\begin{align}
\sup_{q \in \L_b} \E\left[ F\left(\frac{1}{n}\sum_{k=1}^n\delta_{X^q_{(n,k)}}\right) - \int_0^1g\left(nt - \lfloor nt \rfloor, \, \frac{q(t)}{\sqrt{n}}\right)dt\right], \label{def:MFcontrol-ourapproximation}
\end{align}
with $X^q$ as in \eqref{def:MFcontrol-X}. The key advantage, compared to the $n$-particle approximation of the previous paragraph, is that here there is only one controlled process. The tradeoff, however, is that the control problem \eqref{def:MFcontrol-ourapproximation} is inevitably highly path-dependent. If we assume $F \in C_b(\P(\C))$ depends only on the time-$1$ marginal of the measure, then the $n$-particle problem \eqref{def:MFcontrol-particleapprox} becomes Markovian, whereas our approximation \eqref{def:MFcontrol-ourapproximation} remains path-dependent, as the cost function depends on the value of the state process at the $n$ grid points $(X^q(1/n),X^q(2/n),\ldots,X^q(1))$.
We discussed in Section \ref{sex PDE Sanov} (with full details to come in Section \ref{sec BSDE}) how one can essentially still apply dynamic programming and PDE methods to this kind of non-Markovian control problem.

Intuitively, the two approximations \eqref{def:MFcontrol-particleapprox} and \eqref{def:MFcontrol-ourapproximation} may appear more closely related than they truly are. On the one hand, in \eqref{def:MFcontrol-ourapproximation}, we may interpret $X_{(n,k)}$ for $k=1,\ldots,n$ as playing the role of the $n$ particles in \eqref{def:MFcontrol-particleapprox}. Indeed, these chopped paths are driven by independent Brownian motions. However, in \eqref{def:MFcontrol-ourapproximation}, the control $q(t)$ in the time interval $t \in [k/n,(k+1)/n]$ is allowed to depend on the entire past of the process $(X_s)_{s \le t}$, which includes the entire paths $(X_{(n,1)},\ldots,X_{(n,k)})$ on the entire interval $[0,1]$. On the other hand, in \eqref{def:MFcontrol-particleapprox}, the control $q_k(t)$ of particle $k$ at time $t$ can depend only on the paths of the particles up to time $t$, or $(X_1(s),\ldots,X_n(s))_{s \le t}$.

\subsubsection{A probabilistic numerical scheme for non-convex optimization}
The limit theorems presented above suggest a probabilistic method for the numerical computation of the value of optimization problems of the form
\begin{equation*}
	V:=\sup_{\omega \in \C}\left(F(\omega) - \int_0^1g(t,\dot \omega_t)\,dt \right)
\end{equation*}
where $F\in C_b(\C)$ and $g:[0,1]\times \R^d\to \R_+$ is convex for each $t \in [0,1]$.
In fact, approximating the value of this (deterministic) non-convex optimization problem can be very challenging due to the fact that there might be several local suprema.
Theorem \ref{thm HL} shows that if $\rho^{g_n}(F(W/\sqrt{n}))$, the minimal supersolution of the BSDE with terminal condition $F(W/\sqrt{n})$ and generator $g_n^*$ can be computed, then $V$ is obtained by taking $n$ large enough.
This can serve as a test to verify whether a local supremum (as obtained via a numerical scheme) is an actual global supremum. Numerical approximations of BSDEs, for sufficiently nice generators, are well understood.
We refer for instance to \cite{Bou-Tou04,Gob-Lem-War05,Gob-Tur16} and the references therein.

\subsection{Outline of the remainder of the paper}
The rest of the paper is devoted to proving the results stated above. First, Section \ref{sec:var rep} proves the variational formula \eqref{eq BBD} and then uses it to prove Theorem \ref{co:Schilder-finaltime}. Section \ref{sec: proof Limit Thms} gives the more involved proof of Theorem \ref{th:mainlimit}. The remaining four sections address the applications, beginning with BSDEs and PDEs in Sections \ref{sec BSDE scaled limits} and \ref{sec BSDE}, respectively. Section \ref{se:extensions} gives some modest extensions of our main results, in particular to allow for non-random initial states, which is crucial in proving our results on Schr\"odinger problems in the final Section \ref{sec Schroedinger details}.

\section{The stochastic control representation}
\label{sec:var rep}

This section is devoted to the stochastic control representation of $\rho^g$, already hinted at in \eqref{eq BBD}. In fact, we will establish a stronger result. In the following, the total variation metric on $\P(\C)$ is defined by $(Q,Q') \mapsto \sup \int f\,d(Q-Q')$, where the supremum is over measurable functions $f : \C \to [-1,1]$.

\begin{theorem}\label{thm BBD}
Let $H:\mathcal P(\C)\to\R$ be bounded and continuous with respect to total variation, then
\begin{align}\label{rem BBD measures}
\sup_{Q\in\mathcal Q}\left\{H(Q)- \alpha^g(Q)\right\}=\sup_{q\in\mathcal L_b}\left\{H(Q^q)- \E\left [\int_0^1 g(t,q(t))dt\right ]\right\},
\end{align}
where $$\textstyle Q^q := \W \circ \left( W + \int_0^\cdot q(t)dt\right)^{-1}.$$ In particular, if $F: \C\to\R$ is Borel measurable and bounded, then 
\begin{align}\tag{BBD}
\rho^g(F)=\sup_{q \in \L_b} \E\left[ F \left ( W + \int_0^\cdot q(t) \,dt  \right) - \int_0^1 g(t,q(t))dt  \right ].
\end{align}
\end{theorem}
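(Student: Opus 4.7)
The \eqref{eq BBD} identity follows immediately from the first part of the theorem by choosing $H(Q) := \E^Q[F]$, which is bounded by $\|F\|_\infty$ and $1$-Lipschitz (hence continuous) in the total variation metric; using the pushforward identity $\E^{Q^q}[F] = \E[F(W + \int_0^\cdot q(t)dt)]$ the right-hand side becomes exactly the BBD expression. The real work is therefore the general identity, which the plan proves in two directions.

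For the direction $\leq$: fix $q \in \L_b$ and set $Q := Q^q = \W \circ T^{-1}$ with $T(\omega) = \omega + \int_0^\cdot q(s, \omega)ds$, so that $V := T(W)$ has law $Q$ under $\W$. The innovations theorem applied to the $\W$-semimartingale decomposition $dV = q(s, W)\,ds + dW$ identifies the Girsanov drift of $Q$ (as a functional of the canonical process) with the conditional projection $\hat q(s) := \E^\W[q(s, W) \,|\, \sigma(V_u : u \le s)]$. Convexity of $g(s, \cdot)$ and conditional Jensen then give
\[\textstyle
\alpha^g(Q) = \E^\W\!\left[\int_0^1 g(s, \hat q(s))\,ds\right] \leq \E^\W\!\left[\int_0^1 g(s, q(s, W))\,ds\right],
\]
so $H(Q^q) - \E[\int_0^1 g(s, q(s))ds] \leq H(Q) - \alpha^g(Q) \leq \sup_{Q' \in \Q}(H(Q') - \alpha^g(Q'))$.

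For the direction $\geq$: fix $Q \in \Q$. First treat the case $q^Q \in \L_b$. Using strong well-posedness of the SDE $dX = q^Q(t, X)dt + dW$ (available, for instance, via a mollification in $q^Q$ combined with Yamada--Watanabe / Girsanov arguments), construct $q \in \L_b$ by $q(s, W) := q^Q(s, X)$, so that $W + \int_0^\cdot q(s)ds = X$ has law $Q$ under $\W$. This gives equality term-by-term: $\E^Q[F] = \E[F(W + \int_0^\cdot q(s)ds)]$ and, by pushforward, $\alpha^g(Q) = \E^Q[\int_0^1 g(s, q^Q(s))ds] = \E[\int_0^1 g(s, q(s))ds]$. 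For general $Q \in \Q$ with possibly unbounded drift, approximate by truncations $q_n^Q := q^Q \cdot 1_{|q^Q| \leq n}$, denoting by $Q_n$ the associated measures; one verifies $Q_n \to Q$ in total variation (hence $H(Q_n) \to H(Q)$) and $\E[\int_0^1 g(s, q_n^Q(s))ds] \to \alpha^g(Q)$, where the assumptions (TI)---in particular $0 \in \mathrm{ri}(\mathrm{dom}(g(t,\cdot)))$ and \eqref{TI: time integrability}---ensure that the truncations remain in the effective domain of $g$ and provide the necessary integrability.

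The principal obstacle is the approximation step in the $\geq$ direction: one must simultaneously secure TV-convergence $Q_n \to Q$ (to control $H$) and convergence of the convex cost $\alpha^g(Q_n) \to \alpha^g(Q)$, using only lower semi-continuity, convexity, and the coercivity/domain conditions in (TI), without any assumption of smoothness or strict finiteness of $g$. This is the point at which the ``better-behaved'' surrogate functional $\tilde\alpha^g$ mentioned in the introduction is expected to play its essential role, and it is also where the flexibility to restrict the supremum on the right-hand side to the strictly smaller class $\L_b$ (rather than all of $\L$) is bought back.
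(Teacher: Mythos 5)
Your first direction (fix $q \in \L_b$, identify the Girsanov drift of the pushforward law via the innovations theorem, then conditional Jensen) is exactly the paper's argument and is correct. The genuine gap is in the $\ge$ direction, at the moment you claim that for $q^Q \in \L_b$ the SDE $dX(t) = q^Q(t,X)dt + dW(t)$ is strongly well-posed, ``available, for instance, via a mollification in $q^Q$ combined with Yamada--Watanabe / Girsanov arguments.'' For a merely bounded, progressively measurable, \emph{path-dependent} drift this is false: Tsirelson's example provides a bounded progressive drift whose SDE has a weak solution, unique in law, but no strong solution, and mollification or Yamada--Watanabe cannot repair this, since pathwise uniqueness is precisely what fails. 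Without a strong solution you cannot represent $Q$ as $\W \circ (W + \int_0^\cdot q(t,W)dt)^{-1}$ for some $q \in \L_b$, so the term-by-term equality on which your argument rests does not go through, and the bounded-drift case, which your truncation step reduces everything to, remains unproved.

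The paper closes exactly this hole with Lemma \ref{lem useful SDE}: the supremum over $\Q$ is reduced in three stages --- first a stopping-time localization to measures whose cost $\int_0^1 g(t,q^Q(t))dt$ is essentially bounded (this is what later makes dominated convergence legitimate under the \emph{changing} measures $Q_n$), then radial truncation to bounded drifts, and finally a Lusin/Tietze approximation producing \emph{simple} (piecewise constant in time) drifts with values in $\Rel$, for which strong solvability of the SDE is immediate; the hypotheses $0 \in \Rel$ and \eqref{TI: time integrability} are used precisely to control $g$ along this last approximation. Two further points in your sketch are off: the cost convergence you assert is really $\alpha^g(Q_n) = \E^{Q_n}[\int_0^1 g(t,q^{Q_n}(t))dt] \to \alpha^g(Q)$, an expectation under the varying measure $Q_n$, which is why the preliminary localization is needed and why ``(TI) provides the necessary integrability'' is not by itself a proof; and your closing suggestion that the surrogate functional $\tilde{\alpha}^g$ is the missing tool is a misdirection --- $\tilde{\alpha}^g$ plays no role in the proof of this theorem (it is introduced later, for compactness in the Sanov-type limit theorem); the missing ingredient here is the simple-process approximation.
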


Recall that in the quadratic case $g(t,q):=|q|^2/2$ we have $\rho^g(X)=\log\E[e^X]$, and Equation \eqref{eq BBD} becomes the celebrated  variational principle obtained in \cite{Fl78,Boue-Dup,Bo00}. We stress that in such case, \eqref{eq BBD} has already proved to be a powerful tool in stochastic analysis, e.g.\ in large deviations theory \cite{Boue-Dup}, in convex geometry (e.g.\ functional inequalities \cite{Lehec}) and in the study of convexity properties of Gaussian measure \cite{Bo00,vanHandel17}. For these reasons we employ the name \emph{Borell-Bou\'e-Dupuis formula} for the representation \eqref{eq BBD}. On the other hand, for nonlinear $H$, the identity \eqref{rem BBD measures} seems to be novel even in the quadratic case and will be useful in the proofs of Theorem \ref{th:mainlimit} and \ref{co:Schilder-finaltime}.

For the stochastic control connoisseur we stress that the formula \eqref{eq BBD} is a natural consequence of the definition of $\rho^g$ (see \ref{eq def alpha rho}) and the fact that optimizing over open-loop or closed-loop controls should yield the same optimal value. The difficulty lies mainly in the rather arbitrary path-dependence of $F$. 

We prepare with a lemma which allows us to restrict the supremum in the definition of $\rho^g$ to a more convenient class. In the following, recall that $\L_b$ denotes the set of bounded progressively measurable functions $q : [0,1] \times \C \rightarrow \R^d$. Let $\L_b^s$ denote the set of $q \in \L_b$ such that the SDE
\begin{equation}
\label{eq:SDE_simpledrift}
	dX(t) = q(t,X)dt + dW(t), \quad X(0)=0,
\end{equation}
 admits a unique strong solution. If $Q$ denotes the law of $X$ then $q=q^Q$. We find it useful, and intuitive, to overload the notation $\rho^g$ in the following way: if $H:\mathcal P(\C)\to\R$ we write
$$\rho^g(H):=\sup_{Q\in\mathcal Q}\left\{H(Q)- \alpha^g(Q)\right\}.$$
This notation is only employed within this section of the article.

\begin{lemma}\label{lem useful SDE}
Let $H:\mathcal P(\C)\to\R$ be as stated in Theorem \ref{thm BBD}. We have
\begin{align}\label{eq:sub q SDE}
\rho^g(H)=\sup\left\{H(Q) -  \E^Q\left[ \int_0^1 g(t,q^Q(t))dt  \right ] : Q \in \Q, \, q^Q \in \L_b^s\right\},
\end{align}
\end{lemma}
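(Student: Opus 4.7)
The "$\ge$" direction is immediate, since the right-hand side is a supremum over a subset of the set defining $\rho^g(H)$. The substantive work lies in proving "$\le$". The plan is to show that for any $Q \in \Q$ with $\alpha^g(Q) < \infty$ (if $\alpha^g(Q) = \infty$ the measure contributes nothing), one can construct a sequence $\{Q_n\} \subseteq \Q$ with $q^{Q_n} \in \L_b^s$ such that $Q_n \to Q$ in total variation and $\alpha^g(Q_n) \to \alpha^g(Q)$; the total-variation continuity of $H$ then yields $H(Q_n) - \alpha^g(Q_n) \to H(Q) - \alpha^g(Q)$, which suffices. I would build the approximating sequence in two stages, first truncating to a bounded drift in $\L_b$, then time-discretizing to land in $\L_b^s$, concluded by a diagonal argument.

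Stage 1 (drift truncation). Writing $q := q^Q \in \L$, I would set $q^{(k)} := q\,\mathbf{1}_{\{|q|\le k\}}$ and take $Q^{(k)} \in \Q$ to be the Girsanov measure with drift $q^{(k)} \in \L_b$. The convexity of $g(t,\cdot)$ applied to $q^{(k)} = \mathbf{1}_{\{|q|\le k\}}\,q + \mathbf{1}_{\{|q|>k\}}\cdot 0$ gives the pointwise bound
\[
g(t,q^{(k)}) \le \mathbf{1}_{\{|q|\le k\}}\,g(t,q) + \mathbf{1}_{\{|q|>k\}}\,g(t,0),
\]
which combined with $\alpha^g(Q) < \infty$ and the integrability of $g(t,0)$ from \eqref{TI: time integrability} would let me deduce $\alpha^g(Q^{(k)}) \to \alpha^g(Q)$ by dominated convergence, after absorbing the change of measure $Q \rightsquigarrow Q^{(k)}$. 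Total-variation convergence $Q^{(k)} \to Q$ follows from $L^1$-convergence of the Girsanov densities $dQ^{(k)}/d\W$.

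Stage 2 (discretization). For each bounded $q \in \L_b$, I would approximate by simple path-dependent drifts
\[
q_m(t,\omega) := \sum_{j=1}^{m} \phi_j^m\bigl(\omega(0),\omega(1/m),\ldots,\omega((j-1)/m)\bigr)\,\mathbf{1}_{((j-1)/m,\,j/m]}(t),
\]
where each $\phi_j^m$ is bounded measurable, chosen via standard simple-function approximation on the partition so that $q_m \to q$ in $L^2(dt \otimes d\W)$ with $\sup_m \|q_m\|_\infty < \infty$. On each sub-interval the SDE $dX = q_m(t,X)dt + dW$ reduces to a constant drift plus a Brownian increment, so induction on $j$ produces the unique strong solution explicitly and $q_m \in \L_b^s$. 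Girsanov--Pinsker estimates then yield $Q^{q_m} \to Q^q$ in total variation, and the uniform $L^\infty$-bound together with continuity of $g(t,\cdot)$ on its domain (granted by convexity and (TI)) would allow me to conclude $\alpha^g(Q^{q_m}) \to \alpha^g(Q^q)$.

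The main obstacle is the continuity of $Q \mapsto \alpha^g(Q)$ along these approximating sequences. In the quadratic case $g(t,q)=\tfrac12|q|^2$, $\alpha^g$ is the relative entropy and the needed continuity follows from classical estimates; in our generality, however, $\alpha^g$ integrates a general convex coercive $g$ against a varying measure, and one must couple dominated-convergence estimates for the integrands $g(t,q^{Q_n})$ with $L^1$-convergence of the densities $dQ_n/d\W$. This delicate interplay is what forces us to leverage the coercivity and convexity of $g$ together with the integrability conditions \eqref{TI:domain}--\eqref{TI: time integrability} throughout the argument; once both stages are in hand, a diagonal extraction yields the required sequence in $\L_b^s$.
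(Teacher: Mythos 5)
Your overall strategy is the same as the paper's (truncate the drift, then discretize to piecewise-constant drifts for which the SDE is strongly solvable, and pass to the limit using total-variation convergence of the Girsanov densities together with convergence of the costs), but two of your limit interchanges have genuine gaps. First, in Stage 1 the claimed dominated convergence ``after absorbing the change of measure'' is not justified: you must control $\E\bigl[\tfrac{dQ^{(k)}}{d\W}\int_0^1 g(t,q^{(k)}(t))dt\bigr]$, where \emph{both} the density and the integrand vary with $k$. The convexity bound dominates the integrand by $\int_0^1 g(t,0)dt+\int_0^1 g(t,q(t))dt$, but this dominating random variable is only known to be integrable against $dQ/d\W$, not against $dQ^{(k)}/d\W$; $L^1$-convergence of the densities does not control the cross term $\E\bigl[(\tfrac{dQ^{(k)}}{d\W}-\tfrac{dQ}{d\W})^+\int_0^1 g(t,q(t))dt\bigr]$, and even the one-sided bound $\limsup_k\alpha^g(Q^{(k)})\le\alpha^g(Q)$ that you actually need does not follow. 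This is precisely why the paper inserts a preliminary reduction: it first stops the cost, $\tau_n=\inf\{t:\int_0^t g(s,q^Q(s))ds>n\}\wedge 1$, and sets $dQ_n/d\W=\E[dQ/d\W\,|\,\F_{\tau_n}]$, so that $Q_n=Q$ on $\F_{\tau_n}$ and the cost splits exactly; only after one is reduced to measures whose cost integral lies in $L^\infty(\W)$ does your magnitude truncation (and every later change of measure) become harmless, since a bounded integrand paired with $L^1$-convergent densities passes to the limit. You need this step, or some substitute uniform-integrability argument, which your proposal does not supply.

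Second, in Stage 2 you invoke ``continuity of $g(t,\cdot)$ on its domain (granted by convexity and (TI))'', which is false in general: a convex lower semicontinuous function on $\R^d$, $d\ge 2$, is continuous only on the \emph{relative interior} of its domain and can be discontinuous at relative boundary points, and (TI) expressly allows $+\infty$ values (e.g.\ the convex indicator of a compact convex set). Consequently a ``standard simple-function approximation'' of a bounded drift need not satisfy $g(t,q_m(t))\to g(t,q(t))$, may even exit $\mathrm{dom}(g(t,\cdot))$ so that the cost is $+\infty$ along the approximation, and the domination you need is unavailable because \eqref{TI: time integrability} only bounds $g$ over $\Rel\cap\{|q|\le r\}$. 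The paper's proof is built around exactly this obstruction: it first replaces $q^Q$ by $(1-\epsilon)q^Q$, which lies in $\Rel$ since $0\in\Rel$, and then uses Lusin's theorem plus the Tietze extension theorem to produce continuous, uniformly bounded approximations taking values in the closed convex hull of the shifted values (hence in $\Rel$), after which continuity of $g$ on $\Rel$ and \eqref{TI: time integrability} yield the dominated-convergence step, and only then discretizes to simple processes. Your Stage 2 would be fine for finite, jointly continuous $g$, but as written it fails in the generality of assumption (TI); the remaining ingredients of your argument (Scheff\'e for total-variation convergence, strong solvability of the SDE with piecewise-constant path-dependent drift by forward induction) match the paper and are fine.
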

\begin{proof}
As $g$ is bounded from below, we may assume without loss of generality that $g \ge 0$, by making an additive shift to both $H$ and $g$.
We make two intermediate approximations. First, define $\Q_\infty$ to be the set of $Q \in \Q$ such that $\int_0^1 g(t,q^Q(t))dt \in L^\infty(\W)$.
Let us show
\begin{align}\label{eq:sub q simple1}\textstyle
\rho^g(H)=\sup_{Q \in \Q_\infty} \left\{ H(Q)-\E^Q\left[  \int_0^1 g(t,q^Q(t))dt  \right ]\right\}.
\end{align}
To prove this, we first note that we may trivially restrict the supremum in the definition of $\rho^g(F)$ to those $Q \in \Q$ for which $\E^Q\int_0^1g(t,q^Q(t))dt < \infty$. Fix one such $Q \in \Q$. In the notation of (TI), we have $q^Q(t) \in \mathrm{dom}(g(t,\cdot))$, $dt \otimes d\W$-a.e. Let $\tau_n = \inf\{t : \int_0^tg(s,q^Q(s))ds > n\} \wedge 1$ and define $\frac{dQ_n}{d\W}:=\E[\frac{dQ}{d\W}\,|\,\F_{\tau_n}]$, so that $q^{Q_n}=q_n$, where $q_n(t) := q^Q(t)1_{\{t \le \tau_n\}}$.
We easily check that $dQ_n/d\W \rightarrow dQ/d\W$ in probability, and, by Scheffe's lemma, in $L^1(\W)$. This implies that $Q_n\to Q$ in total variation, and so $H(Q^n) \rightarrow H(Q)$. Moreover, $Q_n=Q$ on $\F_{\tau_n}$, and we deduce
\begin{align*}\textstyle
H(Q^n) -\E^{Q_n}\left[ \int_0^1g(t, q^{Q_n}(t))\,dt\right] & \textstyle= H(Q^n) - \E^Q\left[\int_0^{\tau_n}g(t, q(t))\,dt\right] - \E^{Q_n}\left[\int_{\tau_n}^1g(t,0)\,dt\right] \\ \textstyle
	& \textstyle\rightarrow H(Q)- \E^Q\left[ \int_0^1g(t, q(t))\,dt\right].
\end{align*}

With \eqref{eq:sub q simple1} established, we next show that in fact
\begin{align}\label{eq:sub q simple2}\textstyle
\rho^g(H)=\sup\left\{ H(Q)-\E^Q\left[  \int_0^1 g(t,q^Q(t))dt  \right ]  : Q \in \Q_\infty, \, q^Q \in \L_b\right\}.
\end{align}
To prove this, fix $Q \in \Q_\infty$. We again have $q^Q(t) \in \mathrm{dom}(g(t,\cdot))$, $dt \otimes d\W$-a.e. Define $q_n(t)$ as the projection of $q^Q(t)$ onto the centered ball of radius $n$, that is:
\[
q_n(t) := q^Q(t)1_{\{|q^Q(t)| \le n\}} + \frac{n}{|q^Q(t)|}q^Q(t)1_{\{|q^Q(t)| > n\}}.
\]
Using convexity of $g(t,\cdot)$ and $g \ge 0$, we have
\begin{align}
g(t,q_n(t)) &\le g(t,0) + g(t,q^Q(t)). \label{pf: BBD SDE lem 1}
\end{align}
For each $(t,\omega)$ it holds for all sufficiently large $n$ that $q_n(t,\omega)=q^Q(t,\omega)$, and thus $g(t,q_n(t,\omega)) \rightarrow g(t,q^Q(t,\omega))$ pointwise. Find $Q_n \in \Q$ such that $q^{Q_n}=q_n$. Since $q_n \rightarrow q^Q$, we deduce, as in the previous step, that $dQ_n/d\W \rightarrow dQ/d\W$ in $L^1(\W)$ and thus $Q_n \to Q$ in total variation. Thanks to \eqref{pf: BBD SDE lem 1} we may apply dominated convergence to get
\begin{align*}\textstyle
H(Q_n)-\E^{Q_n}\left[ \int_0^1g(t, q^{Q_n}(t))\,dt\right] \rightarrow H(Q)-\E^{Q}\left[ \int_0^1g(t, q^Q(t))\,dt\right].
\end{align*}

Now that we have proven \eqref{eq:sub q simple2}, we show as a final approximation that
\begin{align}\label{eq:sub q simple}\textstyle
\rho^g(H)=\sup \left\{ H(Q)-\E^Q\left[  \int_0^1 g(t,q^Q(t))dt  \right ]\right\},
\end{align}
where the supremum is taken over $Q \in \Q$ such that $q^Q$ is a simple process.
We say here that $q : [0,1] \times \C \rightarrow \R^d$ is a \emph{simple process} if there is a (deterministic) partition $t_0<t_1<\dots<t_N$ and bounded ${\cal F}_{t_i}$-measurable random variables $\xi_i$ for which
\begin{equation*}\textstyle
	q(t) = \xi_01_{\{0\}}(t) + \sum_{i=0}^{N-1}\xi_i1_{(t_i,t_{i+1}]}(t).
\end{equation*}
We start from \eqref{eq:sub q simple2}. Fix $Q \in \Q_\infty$ such that $q^Q \in \L_b$, noting that necessarily $q^Q(t) \in \mathrm{dom}(g(t,\cdot) )$ $dt\otimes d\W$-a.e. Suppose $|q^Q| \le C$ pointwise, where $C <\infty$.
Due to convexity and lower semicontinuity of $g(t,\cdot)$, upon making the further approximation $q_\epsilon(t):=\epsilon\bar q(t) + (1-\epsilon)q^Q(t)$, with $\epsilon\in (0,1)$ and for  $\bar q\equiv 0\in \text{ri}(\text{dom}(g(t,\cdot)))=:\Rel$, we can assume $q^Q(t) \in \Rel$. The convex set  $\Rel$ is, by assumption, independent of the time $t$. We now show that $q^Q$ can be suitably approximated by measurable processes with continuous paths. First remark that $q^Q$ can be identified with a measurable function on $$E:=\Psi([0,1]\times \C),$$
where $\Psi(t,\omega):=(t,\omega(\cdot\wedge t))$. The space $E$ is Polish, as a closed subset of the Polish space $[0,1]\times \C$. By Lusin's Theorem, there is for every $k$ a closed set $E^k\subset E$ such that $q^Q$ restricted to $E^k$ is continuous and $dt\otimes d\W(E^k)\leq 2^{-k}$. By the Tietze extension theorem \cite[Theorem 4.1]{dugundji1951extension}, we can find a continuous function $q_k$ on $E$ which coincides with $q^Q$ when restricted to $E^k$ and which takes values in the closed convex hull of $\{q^Q(t,\omega) : (t,\omega) \in E\}$. In particular, $q_k(t,\omega) \in \Rel$ and $|q_k(t,\omega)| \le C$ for each $(t,\omega)$. By Borel-Cantelli, $q_k$ converges $dt\otimes d\W$-a.s.\ to $q^Q$. By further approximating each $q_k$, we may obtain the existence of a sequence of simple processes converging $dt\otimes d\W$-a.s.\ to $q^Q$, each of which still takes values in $\Rel$ and is bounded uniformly by $C$. Let us re-brand by $q_n$ this sequence of simple processes. It follows that $g(t,q_n(t))\to g(t,q^Q(t))$, $dt\otimes d\W$-almost surely, since $g$ is continuous in the relative interior of its domain. 
Since the sequence $(q_n)$ is uniformly bounded, it follows from the assumption \eqref{TI: time integrability} that $\sup_ng(t,q_n(t)) \in L^1([0,1],dt)$.
By dominated convergence we then have
\begin{align}\label{pf:BBD-limit-11}\textstyle
\int_0^1g(t, q_n(t))\,dt \rightarrow \int_0^1g(t, q^Q(t))\,dt \quad P\text{-a.s.}
\end{align}
Now find $Q_n \in \Q$ such that $q^{Q_n}=q_n$, and note as before that $dQ_n/d\W \rightarrow dQ/d\W$ in $L^1(\W)$. The sequence $(\int_0^1g(t, q_n(t))\,dt)_n$ is essentially bounded thanks to \eqref{TI: time integrability}. Hence $E^{Q^n}[\int_0^1g(t,q_n(t))\,dt] \to E^{Q}[\int_0^1g(t,q^Q(t))\,dt]$. Since  $q^{Q_n}$ is a simple process, this proves  \eqref{eq:sub q simple}.

With \eqref{eq:sub q simple} in hand, we complete the proof as follows.	It is clear from the definition that {$\rho^g(H)$} is larger than the right-hand side of \eqref{eq:sub q SDE}. The reverse inequality follows from \eqref{eq:sub q simple} and the fact that whenever $q$ is a simple process in the sense described above, the SDE \eqref{eq:SDE_simpledrift} admits a unique strong solution.
\end{proof}

We can now provide the proof of Theorem \ref{thm BBD}. Our argument is reminiscent of \cite{Lehec}.

\begin{proof}[Proof of Theorem \ref{thm BBD}]
We prove Equation \eqref{rem BBD measures}, establishing first
the inequality ``$\le$''. By Lemma \ref{lem useful SDE}, we fix $Q \in \Q$ such that $q^Q \in \L_b^s$. 
Note that the completed filtrations of $W$ and $W^Q$ coincide, where $W^Q := W-\int_0^\cdot q^Q(t)dt$ is a $Q$-Brownian motion by Girsanov's theorem. Hence, there exists $\widetilde{q} \in \L_b$ such that $q^Q(t)=\tilde q(t,W^Q)$ and so $Q = P\circ (W+\int_0^\cdot \tilde q(t,W)dt)^{-1}$. Thus
\begin{align*}
\textstyle H(Q)-\E^Q\left[\int_0^1 g(t,q^Q(t,W))dt\right] &\textstyle =  H(Q)-\E^Q\left[\int_0^1 g(t,\widetilde{q}(t,W^Q))dt\right] \\ 
	&\textstyle = H\left (P\circ \left(W+\int_0^\cdot \tilde q(t,W)dt\right)^{-1} \right )-  \E\left[\int_0^1 g(t,\widetilde{q}(t,W))dt\right] \\
	&\textstyle  \le \sup_{q\in\mathcal L_b}\left\{H(Q^q)- \E\left [\int_0^1 g(t,q(t))dt\right ]\right\}.
\end{align*}

To prove the opposite inequality, let $q \in \L_b$, and set
\[
X(t) = W(t) + \int_0^tq(s)ds = W(t) + \int_0^tq(s,W)ds.
\]
Letting $\FF^X=(\F^X_t)_{t \in [0,1]}$ denote the complete filtration generated by $X$, let us choose $\widetilde{q} : [0,1] \times \C \rightarrow \R^d$ to be any bounded progressively measurable function satisfying
\begin{align*}
\widetilde{q}(t,X) = \E[q(t,W) \, | \, \F^X_t], \ \ a.s., \ \text{ for each } t \in [0,1].
\end{align*}
In particular, $\tilde{q}$ may be defined via optional projection. It is well known \cite[Exercise (5.15)]{Revuz1999} that the \emph{innovation process}
\[
\widetilde{W}(t) := X(t) - \int_0^t\widetilde{q}(s,X)ds
\]
is an $\FF^X$-Brownian motion. Hence, if $Q := \W \circ X^{-1}$, then $q^Q = \widetilde{q}$ by Girsanov's theorem. Using convexity of $g$ and Jensen's inequality, we conclude
\begin{align*}
\textstyle H\left ( P\circ \left( W+\int_0^\cdot q(t)dt \right)^{-1} \right )-\E\left[ \int_0^1g(t,q(t))dt\right] & \textstyle = H(Q)-\E\left[ \int_0^1g(t,q(t,W))dt\right] \\
	&\textstyle \le H(Q)-\E\left[\int_0^1g(t,\widetilde{q}(t,X))dt\right] \\
	&\textstyle = H(Q)-\E^Q\left[ \int_0^1g(t,\widetilde{q}(t,W))dt\right] \\
	&\le \rho^g(H),
\end{align*}
where the last inequality follows from the identity $q^Q = \widetilde{q}$ and the (overloaded) definition of $\rho^g$. As this inequality is valid for any $q \in \L_b$, the proof of Equation \eqref{rem BBD measures} is complete. Finally, Equation \eqref{eq BBD} follows since $Q\mapsto H(Q):=\E^Q[F(W)]$ is sequentially continuous in the desired way (if $F$ is bounded and Borel) and $\rho^g(H)=\rho^g(F)$ of course.
\end{proof}
}

The functional $\rho^g$ can be extended to random variables $X \in L^0(\W)$ that are bounded from below by setting $\rho^g(X):= \lim_{n\to \infty}\rho^g(X\wedge n)$. It is easily checked that this extension also satisfies \eqref{eq BBD}, though we will make no use of this.

Using Theorem \ref{thm BBD}, we now prove the Schilder-type result of Theorem \ref{co:Schilder-finaltime}. The argument is reminiscent of the weak convergence proof of the Freidlin-Wentzell theorem \cite[Theorem 4.3]{Boue-Dup}. \\

\noindent\textbf{Proof of Theorem \ref{co:Schilder-finaltime}}.
By \eqref{eq BBD} we have
\begin{align*}
\rho^{g_n}\left(F\left(\frac{W}{\sqrt{n}}\right) \right ) &\textstyle = \sup_{q \in \L_b} \E\left[ F \left ( \frac{W+\int_0^\cdot q(s) ds }{\sqrt{n}} \right) - \int_0^1 g\left(t,\frac{q(t)}{\sqrt{n}}\right)dt  \right ]\\
&\textstyle = \sup_{q \in \L_b} \E\left[ F \left ( \frac{W}{\sqrt{n}}  +\int_0^\cdot q(s) ds \right) - \int_0^1 g(t,q(t))dt  \right ].
\end{align*}
We first bound the $\liminf_{n\rightarrow\infty}$ of the above expression. For each absolutely continuous $\omega \in \C_0$ such that $\int_0^1 g(t,\dot \omega(t))dt<\infty$,  define the absolutely continuous path $w_k \in \C_0$ by setting
\begin{equation*}
\dot{w}_k(t):= \dot\omega(t)1_{\{|\dot\omega(t)|\le k \}} + \frac{k}{|\dot\omega(t)|}\dot\omega(t)1_{\{|\dot\omega(t)|>k\}},\quad k\ge 1.
\end{equation*}
Note that $w_k \in \L_b$.
For every $k\in \N$ we have
\begin{align}
\nonumber
	\liminf_{n\to \infty}\rho^{g^n}\left(F\left(\frac{W}{\sqrt{n}} \right) \right) &\ge \liminf_{n\to \infty}\E\left[ F \left ( \frac{W}{\sqrt{n}}  + w_k \right) - \int_0^1 g(t,\dot{w}_k(t))dt  \right ]\\
	\label{eq:first ineq schilder}
	& \ge F(w_k) - \int_0^1 g(t,\dot{w}_k(t))dt.
\end{align}
By convexity of $g(t,\cdot)$, we have
\begin{equation*}
 	g(t, \dot{w}_k(t)) \le g(t,\dot\omega(t)) + g(t,0) + 2b,
\end{equation*}
where $b\ge0$ is a constant such that $g\ge -b$. Moreover, since $\dot{w}_k(t) = \dot{\omega}(t)$ for sufficiently large $k$, it holds that $w_k \to \omega$ and $g(t, \dot{w}_k(t))\to g(t,\dot\omega(t))$ for every $t$. Thus, taking the limit as $k$ goes to infinity in \eqref{eq:first ineq schilder}, it follows by dominated convergence (noting that $|\dot{w}_k| \le |\dot\omega|$) that
 \begin{equation*}
 	\liminf_{n\to \infty}\rho^{g^n}\left(F\left(\frac{W}{\sqrt{n}} \right) \right) \ge F \left (\omega \right) - \int_0^1 g(t,\dot\omega(t))dt.
 \end{equation*}
Recalling the convention that $\int_0^1 g(t,\dot{\omega}(t))dt := \infty$ whenever $\omega$ is not absolutely continuous, we may take the supremum over $\omega \in \C_0$ to get
\begin{align*}
\liminf_{n\rightarrow\infty} \rho^{g_n}\left(F\left(\frac{W}{\sqrt{n}}\right) \right ) &\ge \sup_{\omega \in \C_0} \left( F \left ( \omega \right) - \int_0^1 g(t,\dot{\omega}(t))dt  \right).
\end{align*}

For the opposite inequality, first notice that we may always choose a constant $q \equiv 0$ to get the lower bound
\begin{align}
\rho^{g_n}\left(F\left(\frac{W}{\sqrt{n}}\right) \right ) \ge \E\left[ F \left ( \frac{W}{\sqrt{n}}\right) - \int_0^1g(t,0)dt\right] \ge -2C, \ \ \forall n \in \N, \label{pf:schilder-lowerbound}
\end{align}
where $C < \infty$ is any constant such that $\inf_{\omega \in \C}F(\omega) \ge -C$ and $\int_0^1g(t,0)dt \le C$ (see Assumption (TI)).
Now, take $q_n$ to be $1/n$-optimal; that is, let $q_n \in \L_b$ be such  that
\begin{align}
\rho^{g_n}\left(F\left(\frac{W}{\sqrt{n}}\right) \right ) - \frac{1}{n} \le 
\E\left[ F \left ( \frac{W}{\sqrt{n}}  +\int_0^\cdot q_n(s) ds \right) - \int_0^1 g(t,q_n(t))dt  \right ]. \label{pf:schilder-UB}
\end{align}
From \eqref{pf:schilder-lowerbound}, we have
\begin{align}
\sup_n\E\int_0^1 g(t,q_n(t))dt < \infty. \label{pf:schilder-expbound}
\end{align}
Letting $A_n(t) = \int_0^tq_n(s)ds$, it follows from Lemma \ref{le:H1-tightness} that the sequence $(A_n)$ of $\C_0$-valued random variables is tight. Moreover, if we fix a  subsequence $A_{n_k}$ which converges in law to some $A$, then we may write $A = \int_0^\cdot q(t)dt$ for some process $q$ satisfying
\[
\E\int_0^1 g(t,q(t))dt \le \liminf_{k\rightarrow\infty}\E\int_0^1 g(t,q_{n_k}(t))dt.
\]
Because $\lim_{n\rightarrow\infty} W/\sqrt{n} = 0$ in probability, we have $W/\sqrt{n_k} + A_{n_k} \rightarrow A$ in law.
Recalling \eqref{pf:schilder-UB}, we have (taking limits still along the same subsequence)
\begin{align*}
\limsup_{k\rightarrow\infty} \rho^{g_{n_k}}\left(F\left(\frac{W}{\sqrt{n_k}}\right) \right ) &\le \limsup_{k\rightarrow\infty} \E\left[ F\left( \frac{W}{\sqrt{n_k}} + A_{n_k} \right) - \int_0^1 g(t,q_{n_k}(t))dt  \right ] \\
	&\le \E\left[ F(A) - \int_0^1 g(t,q(t))dt  \right ] \\
	&= \E\left[ F\left( \int_0^\cdot q(t)dt \right) - \int_0^1 g(t,q(t))dt  \right ] \\
	&\le \sup_{\omega \in \C_0} \left( F \left ( \omega \right) - \int_0^1 g(t,\dot{\omega}(t))dt  \right).
\end{align*}
We have argued that for any subsequence we can extract a further subsequence along which the above limsup bound is valid, and we conclude that the same upper bound is valid without passing to a subsequence.
This completes the proof.\hfill\qedsymbol

\section{The Sanov-type limit theorem} \label{sec: proof Limit Thms}

This section develops the necessary machinery for proving Theorem \ref{th:mainlimit},  some of which will be used again in later sections. The goal is to write our problem in a setting amenable to \cite[Theorem 1.1]{Lac-Sanov}. 
A first key step is to use Theorem \ref{thm BBD} to derive an alternative expression for the pre-limit quantity in Theorem \ref{th:mainlimit}, relating it to the iterates denoted $\rho_n$ in \cite{Lac-Sanov}, and this will explain the precise form of the scaling limit. This is carried out in Section \ref{sec: stoch rep rho n}. A second key ingredient in applying \cite{Lac-Sanov} is to check that the sub-level sets of $\alpha^g$ are weakly compact, which turns out to fail in general. Section \ref{se:compactness-alpha} provides a suitable work-around. Finally, Section \ref{se:proof of mainlimit} assembles these pieces into a complete proof.

\subsection{The rescaled control problem}
\label{sec: stoch rep rho n}

Let $\C^n$ be the $n$-fold product space, and denote by $(\omega_1,\dots,\omega_n)$ a typical element in $\C^n$. Let $B_b(\C^n)$ be the space of bounded measurable functions on $\C^n$. We define inductively the iterates of $\rho^g_n : B_b(\C^n) \rightarrow \R\cup\{+\infty\}$  as follows: We set $\rho_1^g \equiv \rho^g$, and for $n > 1$ define
\begin{align}
\label{eq iterates g}
\rho_n^g(f) := \rho_{n-1}^g\bigl(\,(\omega_1,\ldots,\omega_{n-1}) \mapsto \rho(f(\omega_1,\ldots,\omega_{n-1},\cdot))\,\bigr).
\end{align}
In other words, given $f \in B_b(\C^n)$ for $n > 1$, we define\footnote{Actually, the function $\widetilde{f}$ is merely upper-semianalytic in general. But this does not pose any problems, since upper-semianalytic functions are universally measurable.} $\widetilde{f} \in B_b(\C^{n-1})$ by $\widetilde{f}(\omega_1,\ldots,\omega_{n-1}) = \rho^g(f(\omega_1,\ldots,\omega_{n-1},\cdot))$,
and then we set $\rho_n^g(f) = \rho_{n-1}^g(\widetilde{f})$.

Recall from \eqref{eq:chopped paths} the definition of the chopped paths $W_{(n,k)}$ for $k=1,\ldots,n$.
The following representation for $\rho^g_n$ underlies our proof of Theorem \ref{th:mainlimit}:

\begin{proposition} \label{pr:rhon}
For $q \in \L_b$ define $X^q = W + \int_0^\cdot q(t)dt$.
For $f \in B_b(\C^n)$, we have
\begin{align*}
\rho^g_n(f) &= \sup_{q \in \L_b} \E\left[f\left(X^q_{(n,1)},\ldots,X^q_{(n,n)}\right) - n\int_0^1g\left(nt-\lfloor nt \rfloor,\frac{q(t)}{\sqrt{n}}\right)dt \right] \\
	&= n\rho^{G_n}\left(\frac{1}{n}f(W_{(n,1)},\ldots,W_{(n,n)})\right).
\end{align*}
\end{proposition}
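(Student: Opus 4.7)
The plan is to prove the two equalities separately. The second one is immediate from the Borell--Bou\'e--Dupuis formula: since $\omega \mapsto (\omega_{(n,k)})_{k=1}^n$ is continuous from $\C$ to $\C^n$, the function $F:=\tfrac{1}{n} f(W_{(n,1)},\dots,W_{(n,n)})$ lies in $B_b(\C)$, and applying Theorem \ref{thm BBD} to $F$, together with the definition $G_n(t,q)=g(nt-\lfloor nt\rfloor,q/\sqrt n)$, yields
\[
n\rho^{G_n}(F)=\sup_{q\in\L_b}\E\left[f\bigl(X^q_{(n,1)},\dots,X^q_{(n,n)}\bigr)-n\int_0^1 g\!\left(nt-\lfloor nt\rfloor,q(t)/\sqrt n\right)dt\right],
\]
which is the second equality.

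For the first equality I would induct on $n$. The base case $n=1$ is just BBD, since $\rho^g_1 = \rho^g$ and $X^q_{(1,1)}=X^q$. For the inductive step, apply the inductive hypothesis to $\rho^g_{n-1}(\widetilde f)$ with $\widetilde f(\omega_1,\dots,\omega_{n-1}):=\rho^g(f(\omega_1,\dots,\omega_{n-1},\cdot))$; this writes $\rho^g_{n-1}(\widetilde f)$ as a supremum over one control $q\in\L_b$ on $(\C,\W)$. A further application of BBD to the inner $\rho^g$ (pointwise in the outer outcome) introduces a second supremum over a control $\tilde q$ defined on an independent Wiener space $W'$, allowed to depend on the outer trajectory. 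A measurable selection argument, standard for upper-semianalytic value functions (cf.\ \cite{Lac-Sanov}), merges these into a joint supremum over pairs $(q,\tilde q)$ on the product space; this is the one genuinely delicate technical step.

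The remaining content is a chopping bijection. Given $q\in\L_b$ on $[0,1]$, the substitution $u=nt-(k-1)$ gives
\[
X^q_{(n,k)}(t)=W_{(n,k)}(t)+\int_0^t \tilde q_k(u)\,du,\qquad \tilde q_k(u):=\frac{1}{\sqrt n}\,q\!\left(\tfrac{k-1+u}{n}\right),
\]
together with the identity $n\int_0^1 g(nt-\lfloor nt\rfloor,q(t)/\sqrt n)\,dt=\sum_{k=1}^n\int_0^1 g(u,\tilde q_k(u))\,du$. Since $(W_{(n,k)})_{k=1}^n$ are independent Brownian motions, progressivity of $q$ in $W$ corresponds exactly to: $\tilde q_k$ is progressive in $W_{(n,k)}$ and may depend on the entire paths $W_{(n,1)},\dots,W_{(n,k-1)}$, precisely the adaptedness produced by the nested unfolding. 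The inverse substitution glues any admissible tuple back into a single $q\in\L_b$, so the bijection is value-preserving and the first equality follows. The main obstacle is thus the measurable selection step merging the nested suprema; the chopping identities above are essentially deterministic bookkeeping.
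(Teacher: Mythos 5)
Your proposal is correct and follows essentially the same route as the paper's proof: the second equality via Theorem \ref{thm BBD} and the definition of $G_n$, and the first by unfolding the iterated definition of $\rho^g_n$ with repeated applications of Theorem \ref{thm BBD}, a measurable selection argument (\cite[Proposition 7.50]{BertsekasShreve}) to merge the nested suprema into a supremum over sequentially adapted controls, and the change-of-variables/adaptedness bookkeeping identifying such tuples with a single $q\in\L_b$. The only differences are presentational: the paper concatenates the $n$ unit-time problems into one control problem on $[0,n]$ and then applies a single Brownian rescaling to $[0,1]$ (treating $n=2$ explicitly and invoking ``mutatis mutandis''), whereas you rescale block-by-block and phrase the unfolding as an induction — in which case you should note that passing from the inductive hypothesis at scale $\sqrt{n-1}$ to the first $n-1$ blocks of the $n$-chopping requires one extra (routine) Brownian time-space rescaling that your write-up leaves implicit.
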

\begin{proof}
The second claimed equality follows immediately from Theorem \ref{thm BBD} and the definition of $G_n$, so we prove only the first.

For $n=1$ this is Theorem \ref{thm BBD}. Fix $n > 1$. Define a process $B^n : [0,n] \times \C^n \rightarrow \R^d$ by setting
\[
B^n(t,\omega_1,\ldots,\omega_n) = \begin{cases}
\omega_1(t) - \omega_1(0) &\text{if } t \in [0,1] \\
\omega_{k+1}(t-k) - \omega_{k+1}(0) + {\sum_{i=1}^k[\omega_i(1)-\omega_i(0)]} &\text{if } t \in [k,k+1], \ k \leq n-1.
\end{cases}
\]
In other words, $B^n(t,\omega_1,\ldots,\omega_n)$ follows the increments of $\omega_k$ on the interval $[k-1,k]$. Define the filtration $\FF^n$ on $\C^n$ by setting $\F^n_t = \sigma(B^n_s : s \le t)$. Note that $B^n=(B^n(t))_{t \in [0,n]}$ is a Brownian motion on $(\C^n,\FF^n,\W^n)$ {with $\W^n$ the $n$-fold product of $\W$}. In the following, the symbol $\E^n$ will denote expectation on $(\C^n,\FF^n,\W^n)$, and we note that $\E=\E^1$.

Let $\A_n$ denote the set of bounded $\FF^n$-progressively measurable processes $q : [0,n] \times \C^n \rightarrow \R^d$.
For $q \in \A_n$, define a continuous process $X^{n,q}=(X^{n,q}(t))_{t \in [0,n]}$ on $(\C^n,\FF^n,\W^n)$ by
\[
X^{n,q}(t,\omega_1,\ldots,\omega_n) := \int_0^tq(s,\omega_1,\ldots,\omega_n)ds + B^n(t,\omega_1,\ldots,\omega_n).
\]
In the following, for a path $x \in C([0,n];\R^d)$ and for $k=1,\ldots,n$, define the chopped (but not rescaled) path $x_{(c,n,k)} \in \C=C([0,1];\R^d)$ by 
\[
x_{(c,n,k)}(t) = x(k-1+t) - x(k-1), \quad t \in [0,1].
\]
In other words, $x_{(c,n,k)}$ is simply the increment over the time interval $[k-1,k]$.

Let us understand first the case $n=2$. For a fixed $\omega \in \C$, by Theorem \ref{thm BBD} we have
\[
\rho^g(f(\omega,\cdot))  = \sup_{q \in \A_1}\E^1\left[f(\omega,X^{1,q}) - \int_0^1g(t,q(t))dt\right].
\]
Applying Theorem \ref{thm BBD} once again, we have by definition
\begin{align*}
\rho_2^g(f) &= \sup_{\beta \in \A_1}\E^1\left[\rho^g(f(X^{1,\beta},\cdot)) - \int_0^1g(t,\beta(t))dt\right] \\
	&= \sup_{\beta \in \A_1}\E^1\left[\left(\sup_{q \in \A_1}\E\left[f(\omega,X^{1,q}) - \int_0^1g(t,q(t))dt\right]\right)\Bigg|_{\omega=X^{1,\beta}} - \int_0^1g(t,\beta(t))dt\right].
\end{align*}
The key idea here is to apply a form of dynamic programming. In particular, let $\widehat\A_1$ denote the set of functions $[0,1] \times \C \times \C \ni (t,\omega_1,\omega_2) \mapsto \widehat q[\omega_1](t,\omega_2) \in \R^d$ which are jointly measurable, using the progressive $\sigma$-field on $[0,1] \times \C$ for the argument $(t,\omega_2)$ and the Borel $\sigma$-field on $\C$ for the argument $\omega_1$. A standard measurable selection argument \cite[Proposition 7.50]{BertsekasShreve} lets us write the above as
\begin{align}
\rho^g_2(f) &= \sup_{\stackrel{\beta \in \A_1}{\widehat q \in \widehat\A_1}}\E^1\left[\E^1\left[f(\omega_1,X^{1,\widehat{q}[\omega_1]}) - \int_0^1g(t,\widehat{q}[\omega_1](t))dt\right]\Bigg|_{\omega_1=X^{1,\beta}} \!\!- \int_0^1g(t,\beta(t))dt\right]. \label{pf:iterate1}
\end{align}
Now consider a fixed $\widehat q \in \widehat\A_1$ and $\beta \in \A_1$. We may define a process $q : [0,2] \times \C^2 \rightarrow \R^d$ by setting
\begin{align}
q(t,\omega_1,\omega_2) = \beta(t,\omega_1)1_{[0,1]}(t) + \widehat q[\omega_1](t-1,\omega_2)1_{(1,2]}(t). \label{pf:iterate2}
\end{align}
Then $q \in \A_2$, and unpacking the definitions reveals the identities
\begin{align*}
X^{1,\widehat{q}[\omega_1]}(t,\omega_2) &= X^{2,q}_{(c,2,2)}(t,\omega_1,\omega_2), \ \ t \in [0,1], \\
X^{1,\beta}(t,\omega_1) &= X^{2,q}_{(c,2,1)}(t,\omega_1,\omega_2), \ \ t \in [0,1], \\
\E^1\int_0^1g(t,\beta(t))dt &= \E^2\int_0^1g(t,q(t))dt,
\end{align*}
which in turn imply
\begin{align*}
&\E^1\left[f(\omega_1,X^{1,\widehat{q}[\omega_1]}) - \int_0^1g(t,\widehat{q}[\omega_1](t))dt\right]\Bigg|_{\omega_1=X^{1,\beta}} \\
	&\quad =\E^2\left[f(\omega_1,X^{2,q}_{(c,2,2)}(\omega_1,\cdot)) - \int_1^2g(t-1,q(t,\omega_1,\cdot))dt\right]\Bigg|_{\omega_1=X^{1,\beta}} \\
	&\quad = \E^2\left[\left. f(X^{2,q}_{(c,2,1)},X^{2,q}_{(c,2,2)}) - \int_1^2g(t-1,q(t))dt \, \right| \, \F^2_1 \right].
\end{align*}
Indeed, the last identity follows from the fact that the $\C$-valued random variable $(\omega_1,\omega_2) \mapsto \omega_2$ is independent of $\F^2_1$.
Finally, we plug this last expression into \eqref{pf:iterate1}.
Then, note that the map $(\beta,\widehat q) \mapsto q$ given by \eqref{pf:iterate2} defines a bijection between $\A_1 \times \widehat\A_1$ and $\A_2$, and use the tower property of conditional expectation to get
\begin{align*}
\rho_2^g(f) &= \sup_{q \in \A_2}\E^2\left[f(X^{2,q}_{(c,2,1)},X^{2,q}_{(c,2,2)}) - \int_0^2g(t - \lfloor t \rfloor, q(t))dt\right].
\end{align*}
This argument adapts, mutatis mutandis, to the case of general $n > 1$, and we find
\begin{align}
\rho_n^g(f) = \sup_{q \in \A_n}\E^n\left[f(X^{n,q}_{(c,n,1)},\ldots,X^{n,q}_{(c,n,n)}) - \int_0^ng(t-\lfloor t \rfloor,q(t))dt\right]. \label{pf:iterate3}
\end{align}

To complete the proof, we rescale this control problem to live on the time interval $[0,1]$ instead of $[0,n]$. Still working on the space $(\C^n,\FF^n,\W^n)$, define for each $q \in \A_n$ the process
\[
\overline{X}^{n,q}(t) := \frac{1}{\sqrt{n}}X^{n,q}(nt) = \frac{1}{\sqrt{n}}\int_0^{nt}q(s)ds + \frac{1}{\sqrt{n}}B^n({nt}), \quad \text{ for } t \in [0,1].
\]
By a change of variables and Brownian scaling, we can write
\[
\overline{X}^{n,q}(t) = \int_0^t\overline{q}(s)ds + \overline{B}^n(t),
\]
where $\overline{q}(s) := \sqrt{n}q(ns)$, and $\overline{B}^n(t) := \frac{1}{\sqrt{n}}B^n(nt)$ is a Brownian motion. Another change of variables yields 
\[
\int_0^ng(t-\lfloor t \rfloor,q(t))dt = n\int_0^1g\left(nt-\lfloor nt \rfloor,\frac{\overline{q}(t)}{\sqrt{n}}\right)dt.
\]
Lastly, it is straightforward to check that $X^{n,q}_{(c,n,k)} \equiv \overline{X}^{n,q}_{(n,k)}$. Putting it all together, \eqref{pf:iterate3} becomes
\begin{align*}
\rho_n^g(f) = \sup_{q \in \A_n}\E^n\left[f(\overline{X}^{n,q}_{(n,1)},\ldots,\overline{X}^{n,q}_{(n,n)}) - n\int_0^1g\left(nt-\lfloor nt \rfloor,\frac{\overline{q}(t)}{\sqrt{n}}\right)dt\right].
\end{align*}
Complete the proof by transferring everything from the probability space $(\C^n,\FF^n,\W^n)$ to the original space $(\C,\FF,\W)$, using the map $\C^n \ni (\omega_1,\ldots,\omega_n) \mapsto \overline{B}^n(\omega_1,\ldots,\omega_n) \in \C$.
\end{proof}

\subsection{In search of compactness} \label{se:compactness-alpha}
As mentioned above, the goal of this section is to overcome the technical impediment that the functional $\alpha^g$ does not necessarily have compact sub-level sets. We illustrate this with an example, but we stress that this is only an issue when we do not assume that $g$ has at least quadratic growth. 

\begin{example}
Take $d=1$ and $g(t,q)=|q|^{5/4}$. Set $q_n(t):=t^{-3/4}{\bf 1}_{(1/n,1]}(t)$ and $q_{\infty}(t):= t^{-3/4}$. Define $Q_n$ as the measure with density
\[
\frac{dQ_n}{d\W} = \exp\left (\int_0^1 q_n(t) dW(t) - \frac{1}{2}\int_0^1 |q_n(t)|^2dt \right ),
\]
and let $Q_\infty = \text{Law}(W+\int_0^\cdot q_\infty(t) dt)$. One can easily check the following:
\begin{enumerate}
\item $Q_n$ converges to $Q_\infty$ in the weak topology of measures.
\item $\alpha^g(Q_n)\leq 16$ for each $n$.
\item $Q_\infty$ is singular to $\W$, so in particular $\alpha^g(Q_\infty)= \infty$.
\end{enumerate}
This shows that the sublevel set $\{\alpha^g\leq 16\}$ is not even closed in the weak topology of measures.
\end{example}

For this reason,  we initially replace $\alpha^g$ and $\rho^g$ by two new functionals better suited for our purposes.  Let ${\cal P}^*$ denote the set of those measures $Q$ on $\C$ for which {there exists a} progressive $\R^d$-valued process $q^Q$ {such that $\int_0^1|q^Q(s)|ds < \infty$ $Q$-a.s.\  and}
$$ W(t) - \int_0^t q^Q(s)ds \,\,\,\text{ is a $Q$-Brownian motion}.$$ 
{The process $q^Q$ is then uniquely defined up to $dt \otimes dQ$-almost everywhere equality.}
This does not reduce to Girsanov theory, since we are not asking that elements in ${\cal P}^*$ be absolutely continuous with respect to Wiener measure (e.g.\ the set ${\cal P}^*$ contains measures singular to $\W$, such as the laws of Brownian bridges or Bessel processes). Note, however, that $\alpha^g(Q)=\tilde{\alpha}^g(Q)$ for $Q \in \Q$.

Consider the functional
\begin{align}
{\cal P}^*\ni Q\mapsto \tilde{\alpha}^g(Q):= \E^Q\left[\int_0^1 g(t,q^Q(t))dt\right]\in \R\cup \{+\infty\}, \label{def:tildealpha}
\end{align}
where we define the functional as $+\infty$ outside of ${\cal P}^*$.
Let $B_b(\C)$ denote the set of bounded measurable functions on $\C$ and define the functional
$$\tilde{\rho}^g(F):=\sup_{Q\in {\cal P}^*} (\E^Q[F]-\tilde{\alpha}^g(Q)), \quad F \in B_b(\C).$$
We now give some elementary facts about $\tilde{\alpha}^g$ which may seem folklore. We defer the rather technical proof of the next lemma to Appendix \ref{sec pending proofs}. Recall that we are assuming at all times that the given function $g$ satisfies assumption (TI).

\begin{lemma}\label{thm technical}
The functional $\tilde{\alpha}^g$ is convex, lower semicontinuous with respect to weak convergence of measures on path space, and its sub-level sets are weakly compact in this topology. Furthermore, we have
$$\tilde{\alpha}^g(Q)=\sup_{F\in B_b(\C)}\left(E^Q[F]-\tilde{\rho}^g(F)\right)={\sup_{F\in C_b(\C)}\left(E^Q[F]-\tilde{\rho}^g(F)\right),\,\,\,\, Q\in \mathcal{P}^*}.$$
\end{lemma}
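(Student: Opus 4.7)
My plan is to prove convexity, compactness of sublevel sets, weak lower semicontinuity, and finally the duality formula, in that order; the duality will reduce to a Fenchel--Moreau argument once the first three are in hand. The main obstacle will be the lower semicontinuity, because the drifts $q^{Q_n}$ live under varying measures $Q_n$, so that identifying the drift of the weak limit requires care with martingale and semimartingale convergence.

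\emph{Convexity and compactness.} For $Q_1,Q_2\in\mathcal P^*$ and $\lambda\in(0,1)$ I would set $Q_\lambda:=\lambda Q_1+(1-\lambda)Q_2$, let $Z_i(\cdot):=E^{Q_\lambda}[dQ_i/dQ_\lambda\,|\,\mathcal F_\cdot]$ (so $\lambda Z_1+(1-\lambda)Z_2\equiv 1$), and verify directly, using the identity $E^{Q_\lambda}[Z_i(t)Y_t]=E^{Q_i}[Y_t]$ for adapted $Y_t$ together with the pathwise fact $[W,W]_t=t$, that under $Q_\lambda$ the process $W-\int_0^\cdot(\lambda Z_1 q^{Q_1}+(1-\lambda)Z_2 q^{Q_2})(s)\,ds$ is a Brownian motion. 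Hence $Q_\lambda\in\mathcal P^*$ with $q^{Q_\lambda}=\lambda Z_1 q^{Q_1}+(1-\lambda)Z_2 q^{Q_2}$, and Jensen's inequality applied to the convex $g(t,\cdot)$, followed by integration against $Q_\lambda$, delivers convexity of $\tilde\alpha^g$. For tightness of $K_c:=\{\tilde\alpha^g\le c\}$ I would combine the coercivity in (TI) with de la Vall\'ee-Poussin to obtain uniform integrability of $\{q^Q:Q\in K_c\}$ in $L^1(dt\otimes dQ)$; together with the decomposition $W=B^Q+\int_0^\cdot q^Q(s)\,ds$, this yields uniform equicontinuity in probability of the paths under $Q\in K_c$, and Kolmogorov's criterion supplies tightness. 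Weak closedness of $K_c$, and hence compactness, follows from the next step.

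\emph{Lower semicontinuity.} Given $Q_n\to Q$ weakly with $L:=\liminf_n\tilde\alpha^g(Q_n)<\infty$, I would pass to a subsequence realizing the $\liminf$ and form the joint laws $R_n\in\mathcal P(\C\times\C)$ of $(W,A_n)$ under $Q_n$, with $A_n(\cdot):=\int_0^\cdot q^{Q_n}(s)\,ds$; the coercivity argument above again delivers tightness of $(R_n)$. Extracting a weak limit $R$, whose first marginal is $Q$, the plan is to push the $Q_n$-martingale property of $W-A_n$ to the limit, using the coercivity-driven uniform integrability for Vitali-type convergence of conditional expectations, and so show that under $R$ the process $W-A$ is a continuous local martingale with quadratic variation $t$, hence an $R$-Brownian motion. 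This identifies $Q\in\mathcal P^*$ with $q^Q\,dt=dA$. The Ioffe--Rockafellar lower semicontinuity of convex integral functionals, applied to the convex integrand $g(t,\cdot)$ and the weak convergence $R_n\to R$, then yields
\[
\tilde\alpha^g(Q)=E^R\!\left[\int_0^1 g(t,\dot A(t))\,dt\right]\le\liminf_n E^{R_n}\!\left[\int_0^1 g(t,\dot A_n(t))\,dt\right]=L.
\]

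\emph{Duality formula.} The inequality $\tilde\alpha^g(Q)\ge\sup_{F\in B_b(\C)}(E^Q[F]-\tilde\rho^g(F))$ is immediate from the definition of $\tilde\rho^g$. For the reverse, I would extend $\tilde\alpha^g$ by $+\infty$ to the space $\mathcal M(\C)$ of finite signed Borel measures; by the previous paragraphs this extension is proper, convex and lower semicontinuous on $\mathcal M(\C)$ equipped with the $\sigma(\mathcal M(\C),C_b(\C))$-topology, under which $\mathcal M(\C)$ is a locally convex Hausdorff topological vector space with topological dual $C_b(\C)$. The Fenchel--Moreau theorem then gives $\tilde\alpha^g=(\tilde\alpha^g)^{**}$, which is the stated equality with the supremum over $F\in C_b(\C)$. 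Since $C_b(\C)\subset B_b(\C)$, the supremum over $B_b(\C)$ is trivially at least that over $C_b(\C)$ and at most $\tilde\alpha^g(Q)$ by the easy inequality, closing both equalities.
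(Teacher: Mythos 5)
Your convexity step is correct and is a genuinely different route from the paper: you identify the drift of the mixture explicitly, $q^{Q_\lambda}=\lambda Z_1 q^{Q_1}+(1-\lambda)Z_2 q^{Q_2}$ with $Z_i(t)=\E^{Q_\lambda}[dQ_i/dQ_\lambda\,|\,\F_t]$, and use pointwise Jensen with the weights $\lambda Z_1,(1-\lambda)Z_2$, whereas the paper builds a mixture on an extended space $\C\times\{0,1\}$ and projects the drift onto the canonical filtration before applying Jensen; both work (your identity is well posed because $Z_1$ vanishes a.s.\ on progressive sets that are $dt\otimes dQ_1$-null). The tightness and Fenchel--Moreau steps are in substance the paper's arguments (the paper uses an Aldous-type criterion via the coercivity of $g$; ``Kolmogorov's criterion'' is not the right name for what you need, but the modulus-of-continuity idea is the same), and your $C_b$ versus $B_b$ sandwich is exactly the paper's.

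The genuine gap is in the lower semicontinuity step. From $R_n=\mathrm{Law}_{Q_n}(W,A_n)\to R$ and the passage of the martingale property you can conclude that $X-A$ is a Brownian motion under $R$ \emph{with respect to the filtration generated by the pair} $(X,A)$. This does not identify $Q=R\circ X^{-1}$ as an element of $\mathcal{P}^*$: membership in $\mathcal{P}^*$ requires a drift that is progressively measurable with respect to the canonical filtration of the path alone, and under the limit law $A$ is in general neither adapted to nor even a measurable functional of $X$ (adaptedness of $A_n=\int_0^\cdot q^{Q_n}(s)\,ds$ to $W$ is not preserved by weak limits; this is the usual relaxation phenomenon for feedback controls). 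Consequently the assertion ``$q^Q\,dt=dA$'' and your displayed \emph{equality} $\tilde\alpha^g(Q)=\E^R[\int_0^1 g(t,\dot A(t))\,dt]$ are unjustified and generally false. The missing ingredient is the innovations/optional-projection argument the paper uses: let $\widehat q$ be the optional projection of $\dot A$ onto the filtration generated by $X$; then $X-\int_0^\cdot\widehat q(s)\,ds$ is a Brownian motion in that smaller filtration (Revuz--Yor, Exercise (5.15)), which shows $Q\in\mathcal{P}^*$ with $q^Q=\widehat q$, and conditional Jensen gives $\tilde\alpha^g(Q)=\E\int_0^1 g(t,\widehat q(t))\,dt\le \E^R\int_0^1 g(t,\dot A(t))\,dt\le\liminf_n\tilde\alpha^g(Q_n)$. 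Since only the inequality ``$\le$'' is needed, your Ioffe--Rockafellar step is fine as the second inequality; with the projection step inserted, your proof closes, and the same remark also supplies the weak closedness of the sub-level sets that your compactness paragraph defers to this step.
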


In general $\rho^g$ and $\tilde{\rho}^g$, just as $\alpha^g$ and $\tilde{\alpha}^g$, may differ. It is thus important to establish how $\rho^g$ and $\tilde{\rho}^g$ are related. This is the content of the next result.

\begin{lemma}\label{lem concidence}
If $F : \C \to \R$ is bounded and lower semicontinuous, then $\rho^g(F)=\tilde{\rho}^g(F)$.
\end{lemma}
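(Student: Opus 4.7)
The easy inequality $\rho^g(F)\le\tilde{\rho}^g(F)$ is immediate: $\Q\subset\mathcal{P}^*$, and the functionals $\alpha^g$ and $\tilde{\alpha}^g$ coincide on $\Q$, so the supremum defining $\tilde{\rho}^g$ is taken over a larger set with the same integrand. The work lies in establishing the reverse inequality $\tilde{\rho}^g(F)\le\rho^g(F)$, which is where the lower semicontinuity hypothesis on $F$ enters. My strategy is to show that every $Q\in\mathcal{P}^*$ with $\tilde{\alpha}^g(Q)<\infty$ can be approximated by a sequence $Q_n\in\Q$ such that $Q_n\to Q$ weakly in $\mathcal{P}(\C)$ and $\alpha^g(Q_n)\to\tilde{\alpha}^g(Q)$. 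Given this, lower semicontinuity of $F$ and boundedness give $\liminf_n \E^{Q_n}[F]\ge \E^Q[F]$, and since $\E^{Q_n}[F]-\alpha^g(Q_n)\le\rho^g(F)$ for each $n$, passing to the limit yields $\E^Q[F]-\tilde{\alpha}^g(Q)\le\rho^g(F)$. Taking the supremum over $Q\in\mathcal{P}^*$ finishes the proof.

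To construct the approximating sequence, fix $Q\in\mathcal{P}^*$ with $\tilde{\alpha}^g(Q)<\infty$ and let $q^Q$ be its drift, so that $W^Q:=W-\int_0^\cdot q^Q(s)\,ds$ is a $Q$-Brownian motion on $(\C,Q)$. Truncate the drift by setting $q_n:=q^Q\mathbf{1}_{\{|q^Q|\le n\}}\in\L_b$, and let $Q_n:=Q^{q_n}\in\Q$ be the unique element of $\Q$ with $q^{Q_n}=q_n$ (equivalently, the law under $\W$ of the solution $X_n$ of the SDE $dX_n=q_n(t,X_n)\,dt+dW$, which exists by Girsanov). To get weak convergence, I work on the common coupling space $(\C,Q)$: there the process $\widetilde X_n$ driven by the $Q$-Brownian motion $W^Q$ under the SDE with drift $q_n$ has $Q$-law equal to $Q_n$, while $W$ itself is the analogous process with drift $q^Q$. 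Pathwise stability of SDEs with converging drifts, combined with the $dt\otimes dQ$-a.s.\ convergence $q_n\to q^Q$ and uniform integrability coming from $\tilde{\alpha}^g(Q)<\infty$ and coercivity of $g$, yields $\widetilde X_n\to W$ in $Q$-probability in $\C$, hence $Q_n\to Q$ weakly.

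For the convergence $\alpha^g(Q_n)\to\tilde{\alpha}^g(Q)$, push forward to the coupling: $\alpha^g(Q_n)=\E^{Q_n}\!\int_0^1 g(t,q_n(t))\,dt$ equals the $Q$-expectation of $\int_0^1 g(t,q_n(t,\widetilde X_n))\,dt$. The pointwise bound
\[
g(t,q_n(t,\omega))\le g(t,0)+g(t,q^Q(t,\omega))
\]
from convexity of $g(t,\cdot)$ and the fact that $q_n$ is a convex combination of $0$ and $q^Q$, together with assumption \eqref{TI: time integrability} and $\tilde{\alpha}^g(Q)<\infty$, justifies dominated convergence after passing to the pathwise limit $\widetilde X_n\to W$ and using lower semicontinuity of $g(t,\cdot)$ plus convexity to upgrade a $\limsup$ bound to the desired convergence.

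The main obstacle is clearly the potential singularity of $Q$ with respect to $\W$ (Brownian bridges and Bessel processes, for instance, are in $\mathcal{P}^*\setminus\Q$). A naive truncation-and-Girsanov construction for approximating $Q$ by elements of $\Q$ is not \emph{a priori} guaranteed to converge weakly back to $Q$ when the latter is singular; this is why it is crucial to set up the coupling on $(\C,Q)$ where $W^Q$ is a Brownian motion, turning the problem of weak convergence of $Q_n$ to $Q$ into a problem of pathwise stability of a sequence of SDEs with bounded drifts converging to the (possibly unbounded) drift $q^Q$, controlled through the finiteness of $\tilde{\alpha}^g(Q)$.
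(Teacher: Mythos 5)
Your overall skeleton (the trivial inequality $\rho^g\le\tilde\rho^g$, then approximating each $Q\in\P^*$ with $\tilde\alpha^g(Q)<\infty$ by absolutely continuous $Q_n$ converging weakly with controlled cost, then Portmanteau for the lower semicontinuous $F$) is exactly the paper's strategy, and indeed only $\limsup_n\alpha^g(Q_n)\le\tilde\alpha^g(Q)$ is needed, not convergence. The gap is in how you build $Q_n$ and prove $Q_n\to Q$. You insist on the closed-loop choice $q^{Q_n}=q_n$, i.e.\ $Q_n$ is the law of a solution of $dX_n=q_n(t,X_n)\,dt+dW$, and you then try to realize a copy $\widetilde X_n$ of this SDE on $(\C,Q)$ driven by $W^Q$ and invoke ``pathwise stability of SDEs with converging drifts'' to get $\widetilde X_n\to W$. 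Neither step is available here: $q_n$ is only a bounded, progressively measurable \emph{path} functional, so strong existence of $\widetilde X_n$ on the given space driven by $W^Q$ is not guaranteed (Tsirelson-type drifts are bounded and admit no strong solution), and there is no stability theorem for such drifts -- pointwise convergence $q_n\to q^Q$ says nothing about closeness of solutions whose drifts are evaluated along \emph{different} paths, absent any continuity or Lipschitz structure, which $q^Q$ does not have. Your cost argument inherits the same problem: after the coupling, $\alpha^g(Q_n)=\E^Q\int_0^1 g(t,q_n(t,\widetilde X_n))\,dt$, and the convexity bound dominates this by $g(t,0)+g(t,q^Q(t,\widetilde X_n))$, whose $Q$-expectation is \emph{not} controlled by $\tilde\alpha^g(Q)=\E^Q\int_0^1 g(t,q^Q(t,W))\,dt$; likewise the pointwise limit $q_n(t,\widetilde X_n)\to q^Q(t,W)$ would require regularity of $q^Q$ in $\omega$.

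The paper sidesteps all of this by giving up the identity $q^{Q_n}=q_n$: it sets $Q_n:=Q\circ(X^n)^{-1}$ with $X^n:=W^Q+\int_0^\cdot q_n(s)\,ds$, where $q_n(s)=q^Q(s,W)1_{\{|q^Q(s,W)|\le n\}}$ is evaluated along the canonical process $W$, not along $X^n$. Then $X^n=W-\int_0^\cdot q^Q(s)1_{\{|q^Q(s)|>n\}}\,ds$, so $\|X^n-W\|_\infty\to0$ $Q$-a.s.\ and $Q_n\to Q$ weakly with no SDE stability needed. The price is that the drift of $X^n$ in its own filtration is only the optional projection $\widehat q_n$ of $q_n$ (the innovation argument showing the projected $W^Q$ is still a Brownian motion), and Jensen's inequality gives $\alpha^g(Q_n)\le\E^Q\int_0^1 g(t,q_n(t))\,dt\le\E^Q\int_0^1\bigl(g(t,q^Q(t))1_{\{|q^Q(t)|\le n\}}+g(t,0)1_{\{|q^Q(t)|>n\}}\bigr)dt$, whence $\limsup_n\alpha^g(Q_n)\le\tilde\alpha^g(Q)$ by \eqref{TI: time integrability}. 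To repair your proposal you would essentially have to switch to this open-loop perturbation and add the projection/Jensen step; as written, the central convergence claim is unsupported.
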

\begin{proof}
Obviously $\tilde{\rho}^g\geq \rho^{g}$. Let $Q\in \mathcal P^*$ such that $\tilde{\alpha}^g(Q)<\infty$. We will exhibit a sequence $Q^n$ of absolutely continuous measures such that $$\liminf \{\E^{Q_n}[F]-\alpha^{g}(Q_n)\}\geq \E^{Q}[F]-\tilde{\alpha}^g(Q) ,$$ which would establish the claim. Note that $\E^Q\int_0^1g(t,q^Q(t))dt=\tilde{\alpha}^g(Q) < \infty$ implies $q^Q(t) \in \mathrm{dom}(g(t,\cdot))$, $dt\otimes d\W$-a.e.
We know that $W^Q(t) := W(t) - \int_0^tq^Q(s)ds$ is a $Q$-Brownian motion. Define $q_n(t) = q^Q(t)1_{\{|q^Q(t)| \le n\}}$, and let $Q_n$ denote the law of the process
\[
X^n(t) = W^Q(t) + \int_0^tq_n(s)ds.
\]
Note that $q_n$ is uniformly bounded, and so $Q_n \in \Q$.
Because $q_n(t) \rightarrow q^Q(t)$ for each $t$, it is clear that $Q_n \rightarrow Q$ weakly. Hence, by lower semicontinuity of $F$,
\begin{align*}
\liminf_{n\rightarrow\infty}\E^{Q_n}[F] \ge \E^Q[F].
\end{align*}
Finally, define $\widehat{q}_n(t,X^n)$ and $\widehat{W}^Q$ as the optional projections (under $Q$) of $q_n$ and $W^Q$, respectively, on the filtration generated by $X^n$. Then $\widehat{W}^Q$ remains a Brownian motion in this smaller filtration \cite[Exercise (5.15)]{Revuz1999}.
It follows that $q^{Q_n}(t,X^n) = \widehat{q}_n(t,X^n)$, $dt\otimes dQ$-almost surely. By convexity, we get
\begin{align*}
\alpha^g(Q_n) &= \E^{Q_n}\left[ \int_0^1 g(t,q^{Q_n}(t,W))dt \right] = \E^Q\left[ \int_0^1 g(t,\widehat{q}_n(t,X^n))dt \right] \le \E^Q\left[ \int_0^1 g(t,q_n(t))dt \right] \\
	&= \E^Q\left[ \int_0^1 \left(g(t,q(t))1_{\{|q^Q(t)| \le n\}} + g(t,0)1_{\{|q^Q(t)| > n\}}\right)dt \right].
\end{align*}
Since $\int_0^1 g(t,0)dt < \infty$ by assumption (TI), we conclude from monotone convergence that 
\begin{align}\label{eq approx by abs cont}
\limsup_n \alpha^g(Q_n) \leq \E^Q\left[\int_0^1 g(t,q(t))dt\right] = \tilde{\alpha}^g(Q).
\end{align}
\end{proof}

Recalling the definition of the iterates $\rho^g_n$ based on $\rho^g$ and given in \eqref{eq iterates g}, we define the iterates $\tilde{\rho}^g_n$ based on $\tilde{\rho}^g$ in the same way. A simple consequence of Lemma \ref{lem concidence} is that $\rho^g_n=\tilde{\rho}^g_n$ restricted to a large class of functions: 

\begin{lemma}\label{lem coincidence n}
Let $n \in \mathbb{N}$, and let $f : \C^n\to\R$ be lower semicontinuous and bounded. Then the functions  $\C^{n-1}\ni(\omega_1,\dots,\omega_{n-1})\mapsto \rho(f(\omega_1,\dots,\omega_{n-1},\cdot))$ are lower-semicontinuous and bounded, for both $\rho=\rho^g$ and $\rho=\tilde{\rho}^g$. In particular, for such $f$ we have $\rho^g_n(f)=\tilde{\rho}^g_n(f)$. 
\end{lemma}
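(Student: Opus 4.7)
The plan is to proceed by induction on $n$, with Lemma \ref{lem concidence} serving as the base case $n = 1$. The main technical step is to show that the slice functional $(\omega_1,\dots,\omega_{n-1}) \mapsto \rho(f(\omega_1,\dots,\omega_{n-1},\cdot))$ is itself bounded and lower semicontinuous for both choices $\rho = \rho^g$ and $\rho = \tilde\rho^g$; once this is in hand, Lemma \ref{lem concidence} identifies the two slices pointwise, and the induction hypothesis closes the argument.

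Boundedness of the slice is routine: testing against $q \equiv 0$ (equivalently $Q = \W$) in either supremum gives the bound $\rho(f(\omega_1,\dots,\omega_{n-1},\cdot)) \ge \E[f(\omega_1,\dots,\omega_{n-1},W)] - \int_0^1 g(t,0)\,dt$, which is finite by (TI) and the assumed uniform bound on $f$; conversely, the uniform lower bound $g \ge -b$ of (TI) forces $\alpha^g, \tilde\alpha^g \ge -b$, so $\rho(f(\omega_1,\dots,\omega_{n-1},\cdot)) \le \sup f + b$.

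The heart of the argument is lower semicontinuity. Fix $\omega^k \to \omega$ in $\C^{n-1}$. For $\rho = \rho^g$, Theorem \ref{thm BBD} rewrites the slice as
\[
\rho^g(f(\omega^k,\cdot)) = \sup_{q \in \L_b} \E\left[f\Big(\omega^k,\,W + \int_0^\cdot q(s)\,ds\Big) - \int_0^1 g(t,q(t))\,dt\right].
\]
For each fixed $q \in \L_b$, after an additive shift making $f \ge 0$, Fatou's lemma together with lower semicontinuity of $f$ on $\C^n$ yields
\[
\liminf_k \E\left[f\Big(\omega^k, W + \int_0^\cdot q(s)\,ds\Big)\right] \ge \E\left[f\Big(\omega, W + \int_0^\cdot q(s)\,ds\Big)\right],
\]
so each term in the supremum is lower semicontinuous in the first argument. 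The standard inequality $\liminf \sup \ge \sup \liminf$ then gives $\liminf_k \rho^g(f(\omega^k,\cdot)) \ge \rho^g(f(\omega,\cdot))$. The case $\rho = \tilde\rho^g$ is entirely analogous, starting from $\tilde\rho^g(f(\omega^k,\cdot)) = \sup_{Q \in \mathcal{P}^*}(\E^Q[f(\omega^k,\cdot)] - \tilde\alpha^g(Q))$; under each fixed $Q \in \mathcal{P}^*$, Fatou gives lower semicontinuity of $\omega \mapsto \E^Q[f(\omega,\cdot)]$, and the $\omega$-independent penalty $\tilde\alpha^g(Q)$ does not interfere.

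Having established that the iterated functions
\[
\tilde f^{\rho^g}(\omega_1,\dots,\omega_{n-1}) := \rho^g(f(\omega_1,\dots,\omega_{n-1},\cdot)), \qquad \tilde f^{\tilde\rho^g}(\omega_1,\dots,\omega_{n-1}) := \tilde\rho^g(f(\omega_1,\dots,\omega_{n-1},\cdot))
\]
are bounded and lower semicontinuous on $\C^{n-1}$, Lemma \ref{lem concidence} applied at each fixed $(\omega_1,\dots,\omega_{n-1})$ forces $\tilde f^{\rho^g} \equiv \tilde f^{\tilde\rho^g}$. Invoking the induction hypothesis on this common bounded LSC function then gives $\rho^g_n(f) = \rho^g_{n-1}(\tilde f^{\rho^g}) = \tilde\rho^g_{n-1}(\tilde f^{\tilde\rho^g}) = \tilde\rho^g_n(f)$, closing the induction. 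The only point requiring care is the Fatou step, which is the reason the hypothesis on $f$ is phrased as lower semicontinuity rather than mere measurability; everything else is bookkeeping.
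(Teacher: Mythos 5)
Your proof is correct and follows essentially the same route as the paper: induction with Lemma \ref{lem concidence} as the base case, lower semicontinuity of the slice functionals obtained via Fatou's lemma (a supremum of lower semicontinuous functions being lower semicontinuous), boundedness from $g \ge -b$ and the bound on $f$, and then the pointwise identification of the two slices to close the induction. The only cosmetic difference is that you invoke the representation of Theorem \ref{thm BBD} for the $\rho^g$ case, whereas the paper argues directly from the definitions of $\rho^g$ and $\tilde\rho^g$ with the penalties $\alpha^g$, $\tilde\alpha^g$.
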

\begin{proof}
The case $n=1$ is covered by Lemma \ref{lem concidence}. The general case follows by induction but for ease of presentation we consider only the case $n=2$. Let us prove that $\omega\mapsto F(\omega):=\tilde{\rho}^g(f(\omega, \cdot))$ is lower semicontinuous. To wit, if $\omega_n\to \omega $ and $F(\omega_n)\leq c$ for all $n$, then by definition $$\textstyle\int f(\omega_n,\bar{\omega})dQ(\bar{\omega})-\tilde{\alpha}^g(Q)\leq c,$$ for all $Q\in{\cal P}^*$. Taking limit inferior here, and by Fatou's lemma and lower semicontinuity of $f$, we get $$\textstyle\int f(\omega,\bar{\omega})dQ(\bar{\omega})-\tilde{\alpha}^g(Q)\leq c.$$
Now taking supremum over $Q$ we conclude $F(\omega)\leq c$.
Moreover, because $g$ is bounded from below and $f$ is bounded, $F$ too is bounded. The same reasoning can be applied to $\rho^g$. By Lemma \ref{lem concidence} and the case $n=1$ we have $$\rho^g_2(f)=\rho^g(\,\omega\mapsto \rho^g(f(\omega,\cdot))\,)=\tilde{\rho}^g(\,\omega\mapsto \rho^g(f(\omega,\cdot))\,)=\tilde{\rho}^g(\,\omega\mapsto \tilde{\rho}^g(f(\omega,\cdot))\,)=\tilde{\rho}^g_2(f).$$

\end{proof}

\subsection{Proof of Theorem \ref{th:mainlimit}} \label{se:proof of mainlimit}
With the above machinery we can finally prove Theorem \ref{th:mainlimit}.
Let us denote the empirical measure of the family $(\omega_1,\ldots,\omega_n) \in \C^n$ by
\begin{equation*}
L_n(\omega_1,\ldots,\omega_n) := \frac 1n \sum_{i\le n}\delta_{\omega_i} \label{eq L}
\end{equation*} 
and recall the notation $\omega_{(n,k)}$ from \eqref{eq:chopped paths}. Apply Proposition \ref{pr:rhon} to get
\begin{align*}
\rho^{G_n}\left(F \left(\frac 1n \sum_{k=1}^n\delta_{W_{(n,k)}}\right)\right) &= \frac{1}{n}\rho^g_n(nF \circ L_n).
\end{align*}
Since $F \circ L_n$ is clearly a continuous function on $\C^n$, Lemma \ref{lem coincidence n} yields $\rho^g_n(nF \circ L_n) = \tilde{\rho}^g_n(nF \circ L_n)$. Now, because $\tilde{\alpha}^g$ is convex and has weakly compact sub-level sets, we may apply \cite[Theorem 1.1]{Lac-Sanov} (taking note of the representation of \cite[Proposition A.1]{Lac-Sanov}) to get
\begin{align*}
\lim_{n\rightarrow\infty}\frac{1}{n}\tilde{\rho}^g_n(nF \circ L_n) = \sup_{Q \in \P(\C)}(F(Q) - \tilde{\alpha}^g(Q)) = \sup_{Q \in \P^*}(F(Q) - \tilde{\alpha}^g(Q)).
\end{align*}
To complete the proof, it remains to show that
\begin{align}
\sup_{Q \in {\cal P}^*}\left(F(Q) - \tilde{\alpha}^g(Q)\right) &= \sup_{Q \in \Q}(F(Q)-\alpha^g(Q)). \label{pf:mainlimit1}
\end{align}
Indeed, this will prove the first equality of Theorem \ref{th:mainlimit}, while the second follows from Theorem \ref{thm BBD}.
To prove \eqref{pf:mainlimit1}, notice from the proof of  Lemma \ref{lem concidence} (specifically \eqref{eq approx by abs cont}) that the following holds: If $\tilde{\alpha}^g(Q)<\infty$, then there exist $Q_n \in \Q$ such that $Q_n \to Q$ weakly and $\limsup_n\alpha^g(Q_n)\leq \tilde{\alpha}^g(Q)$. From this and continuity of $F$ we deduce \eqref{pf:mainlimit1}.  \hfill\qedsymbol

\section{BSDE scaling limits} \label{sec BSDE scaled limits}
This section is dedicated to the proofs of Theorems \ref{thm HL} and \ref{thm: Sanov time}.
We will make use of the following definitions. For a function $g$ satisfying (TI) and for $t \in [0,1)$, define $g^{(t)} : [0,1] \times \R^d \rightarrow \R \cup \{\infty\}$ by
\begin{align*}
g^{(t)}(s,q) := (1-t) g\left(t + s(1-t), \frac{q}{\sqrt{1-t}} \right).
\end{align*}
Note that $g^{(t)}$ itself satisfies (TI), and so $\rho^{g^{(t)}}$ is well defined.
Moreover, we define the operation $\otimes_t : \C\times \C_0 \rightarrow \C$ by
\[
\omega \otimes_t \overline\omega (s) := \omega(s \wedge t) + \sqrt{1-t}\,\overline\omega\left(\frac{s-t}{1-t}\right)1_{[t,1]}(s).
\]
We begin with the following crucial lemma, which shows how to express the (super-) solution process $Y(t)$ of a BSDE with generator $g^*$ in terms of $\rho^{g^{(t)}}$.

\begin{lemma} \label{le:BSDE-timedep}
Let $F \in C_b(\C)$, and let $(Y,Z)$ be the minimal supersolution of
\[
dY(t) = - g^*(t,Z(t)) dt + Z(t) dW(t), \quad Y(1) = F(W).
\]
Then, for $t \in [0,1)$ and $\W$-a.e.\ $\omega \in \C$, we have
\begin{align*}
Y(t,\omega) = \rho^{g^{(t)}}(F(\omega \otimes_t \cdot)).
\end{align*}
\end{lemma}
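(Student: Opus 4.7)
The plan is to reduce everything to the representation $\rho^{g^{(t)}}(X) = \tilde Y(0)$ from \eqref{eq rho = Y}, applied to a BSDE obtained from $(Y,Z)$ by time-scaling and freezing the path on $[0,t]$.

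First, I would fix $t \in [0,1)$ and pass to the ``conditional'' BSDE on $[t,1]$: by the flow property of minimal supersolutions (which is standard in the framework of \cite{DHK1101}; see also \cite{tarpodual}), $(Y(s),Z(s))_{s \in [t,1]}$ is the minimal supersolution on $[t,1]$ with generator $g^*$ and terminal condition $F(W)$. Then I would rescale time and space: introduce the Brownian motion
\[
\widetilde W(s) := \tfrac{1}{\sqrt{1-t}}\bigl(W(t+s(1-t)) - W(t)\bigr), \quad s \in [0,1],
\]
and define the processes
\[
\tilde Y(s) := Y(t+s(1-t)), \qquad \tilde Z(s) := \sqrt{1-t}\,Z(t+s(1-t)).
\]
A direct substitution in the BSDE dynamics and a change of the $dt$-integral variable show that $(\tilde Y,\tilde Z)$ satisfies, on $[0,1]$,
\[
d\tilde Y(s) = -(1-t)\,g^*\!\left(t+s(1-t), \tfrac{\tilde Z(s)}{\sqrt{1-t}}\right) ds + \tilde Z(s)\,d\widetilde W(s).
\]
The generator appearing here is exactly $(g^{(t)})^*(s,z)$: one checks by the substitution $q = \sqrt{1-t}\,q'$ in the Legendre transform that
\[
(g^{(t)})^*(s,z) \;=\; \sup_{q}\!\left(q\cdot z - (1-t)\,g\!\left(t+s(1-t),\tfrac{q}{\sqrt{1-t}}\right)\right) \;=\; (1-t)\,g^*\!\left(t+s(1-t),\tfrac{z}{\sqrt{1-t}}\right).
\]

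Next I would rewrite the terminal condition in terms of $\widetilde W$. Conditionally on $\F_t$, with $W|_{[0,t]}$ equal to the fixed path $\omega|_{[0,t]}$, we have $W(s) = \omega(s)$ for $s \le t$ and $W(s) = \omega(t) + \sqrt{1-t}\,\widetilde W\!\bigl(\tfrac{s-t}{1-t}\bigr)$ for $s \in [t,1]$, so that $F(W) = F(\omega \otimes_t \widetilde W)$. Consequently, on the enlarged probability space supporting $\widetilde W$, the pair $(\tilde Y,\tilde Z)$ is a minimal supersolution of the BSDE on $[0,1]$ with generator $(g^{(t)})^*$ and (random, but $\F_t$-measurable up to a parameter $\omega$) terminal condition $F(\omega \otimes_t \widetilde W)$. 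Applying \eqref{eq rho = Y} to the coefficient $g^{(t)}$ (which inherits (TI) from $g$) yields
\[
\tilde Y(0) = \rho^{g^{(t)}}\bigl(F(\omega \otimes_t \,\cdot\,)\bigr),
\]
and since $\tilde Y(0) = Y(t,\omega)$, this concludes the proof.

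The main obstacle is making the parametric ``conditional BSDE'' argument rigorous for \emph{minimal} supersolutions: one needs a measurable selection of supersolutions indexed by $\omega$, together with a conditional form of minimality, to guarantee that after freezing $\omega$ one still obtains the minimal supersolution of the rescaled BSDE rather than merely a supersolution. I would handle this via a regular conditional version of the problem, exploiting that $F(\omega \otimes_t \cdot)$ is bounded and continuous uniformly in $\omega$, and invoking the flow/time-consistency property of minimal supersolutions established in \cite[Theorem 4.17]{DHK1101} and \cite{tarpodual}. The time-space rescaling itself is a routine verification since it preserves the supersolution inequality \eqref{eq:supersolutions} and the supermartingale property of the stochastic integral term.
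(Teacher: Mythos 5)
Your scaling computations are correct: the identity $(g^{(t)})^*(s,z)=(1-t)\,g^*\bigl(t+s(1-t),z/\sqrt{1-t}\bigr)$ and the fact that $(\tilde Y,\tilde Z)$ formally solves the rescaled dynamics driven by $\widetilde W$ do explain why $g^{(t)}$ appears. But the step you defer -- that for $\W$-a.e.\ $\omega$ the path-frozen pair is the \emph{minimal} supersolution of the parametrized problem with terminal condition $F(\omega\otimes_t\cdot)$, so that \eqref{eq rho = Y} can be applied pathwise -- is not a technical afterthought; combined with \eqref{eq rho = Y} it \emph{is} the lemma, and the results you invoke do not supply it. The flow/time-consistency property of minimal supersolutions in \cite{DHK1101}, \cite{tarpodual} concerns restriction to $[t,1]$ with terminal condition $F(W)$ \emph{in the original filtration}; it says nothing about conditioning on $\F_t$, passing to the filtration of the fresh Brownian motion $\widetilde W$ with the past path frozen, and retaining minimality $\omega$-by-$\omega$. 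Even granting (after a regular-conditional-probability argument, e.g.\ for the supermartingale property of $\int\tilde Z\,d\widetilde W$ under the conditional law) that the frozen pair is \emph{a} supersolution of the frozen problem, this only yields $Y(t,\omega)\ge\rho^{g^{(t)}}(F(\omega\otimes_t\cdot))$. The reverse inequality requires assembling, from $\epsilon$-optimal objects of the uncountably many frozen problems, a single global comparison object -- either a global supersolution on $[0,1]$ to compare with $Y$ via minimality, or a single measure in $\Q_t$ (the set of $Q\in\Q$ agreeing with $\W$ on $\F_t$) -- and this measurable-selection-and-pasting step is exactly what your sketch omits; selecting whole supersolution pairs measurably in $\omega$ and verifying that the pasted process is again a supersolution (c\`adl\`ag paths, the inequality across time $t$, the supermartingale property of the pasted stochastic integral) is substantially harder than anything covered by the cited flow property.

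This is precisely why the paper argues on the dual side. It first proves Lemma \ref{lem:rep Qt} (a small variant of \cite[Theorem 3.4]{tarpodual}), representing $Y(t)$ as $\esssup_{Q\in\Q_t}\E^Q\bigl[F(W)-\int_t^1 g(s,q^Q(s))\,ds\,\big|\,\F_t\bigr]$, and then proves the two inequalities by explicit manipulations of measures: disintegrating an arbitrary $Q\in\Q_t$ into kernels $Q_\omega$ and rescaling their drifts for ``$\le$'', and using \cite[Proposition 7.50]{BertsekasShreve} to select $\epsilon$-optimal kernels and paste them into an element of $\Q_t$ for ``$\ge$''. There the objects being selected are measures, for which measurable selection applies cleanly. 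To make your primal route rigorous you would in effect have to reproduce this: apply \eqref{eq rho = Y} (or the $\esssup$ representation) to the frozen problems and then run the same conditioning, selection and pasting on measures -- so the honest fix is to establish a statement like Lemma \ref{lem:rep Qt} first and carry out the conditioning at that level.
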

\begin{proof}
By Lemma \ref{lem:rep Qt} {(which is just a minor modification of \cite[Theorem 3.4]{tarpodual})}, it holds
\[
Y(t) = \esssup_{Q \in \Q_t}\E^{Q}\left[ F(W) - \int_t^1 g\left(s,q^Q(s,W)\right)ds \, \Big| \, \F_t\right],
\]
where $\Q_t$ is the set of those measures $Q\in\Q$ such that $Q=\W$ on $\mathcal F_t$. Note that $q^Q \equiv 0$ on $[0,t]$ for $Q \in \Q_t$. Now, for a path $\omega \in \C$, define $\omega^{(t)} \in \C_0$ by
\[
\omega^{(t)}(s) := \frac{1}{\sqrt{1-t}}\left(\omega(t + s(1-t)) - \omega(t)\right).
\]
It is readily checked that $\omega \otimes_t \omega^{(t)} = \omega$ for $\omega \in \C$. Hence, for a.e.\ $\omega$, we may write
\begin{align}
Y(t,\omega) = \esssup_{Q \in \Q_t}\E^{Q}\left[ F(\omega \otimes_t W^{(t)}) - \int_t^1 g\left(s,q^Q(s,\omega \otimes_t W^{(t)})\right)ds \, \Big| \, \F_t\right](\omega). \label{pf:Y exp 1}
\end{align}
On the other hand, we can write
\begin{align}
\rho^{g^{(t)}}(F(\omega \otimes_t \cdot)) &= \sup_{Q \in \Q}\E^Q\left[ F(\omega \otimes_t W) - \int_0^1g^{(t)}(s,q^Q(s,W))ds\right]. \label{pf:otimes-omega1}
\end{align}

With these preparations out of the way, we first show that
\begin{align}
Y(t,\omega) \le \rho^{g^{(t)}}(F(\omega \otimes_t \cdot)), \quad a.e. \ \omega. \label{pf:timedep-ineq1}
\end{align}
To see this, fix $Q \in \Q_t$. Define a measurable map $\C \ni \omega \mapsto Q_\omega \in \P(\C)$ as a version of $Q(W^{(t)} \in \cdot \, | \, \F_t)(\omega)$. Recalling also that $q^Q=0$ on $[0,t]$, and noting that $\W(W^{(t)} \in \cdot \, | \, \F_t) = \W$ a.s.\ by Brownian scaling, we have
\begin{align*}
\frac{dQ_\omega}{d\W}(\omega^{(t)}) &= \frac{dQ}{d\W}(\omega) = \exp\left(\int_t^1q^Q(s,\omega)d\omega(s) - \frac12 \int_t^1|q^Q(s,\omega)|^2ds\right), \quad \omega \in \C.
\end{align*}
Now, for $\omega \in \C$ define $\widetilde{q}_\omega : [0,1] \times \C_0 \to \R^d$ by
\[
\widetilde{q}_\omega(s,\overline\omega) := \sqrt{1-t} q^Q(t+s(1-t),\omega \otimes_t \overline\omega).
\]
Recalling that $\omega \otimes_t \omega^{(t)} = \omega$, by a change of variables we may write the above as
\begin{align*}
\frac{dQ_\omega}{d\W}(\omega^{(t)}) &= \exp\left(\int_0^1\widetilde{q}_\omega(s,\omega^{(t)})d\omega^{(t)}(s) - \frac12 \int_0^1|\widetilde{q}_\omega(s,\omega^{(t)})|^2ds\right).
\end{align*}
We conclude that $Q_\omega \in \Q$ and $\widetilde{q}_\omega = q^{Q_\omega}$ for a.e.\ $\omega$.  With these identifications and another change of variables in the time-integral, we can write
\begin{align*}
\E^{Q} &\left[ F(\omega \otimes_t W^{(t)}) - \int_t^1 g\left(s,q^Q(s,\omega \otimes_t W^{(t)})\right)ds \, \Big| \, \F_t\right](\omega) \\
	&= \E^{Q_\omega}\left[ F(\omega \otimes_t W) - \int_t^1 g\left(s,q^Q(s,\omega \otimes_t W)\right)ds\right] \\
	&= \E^{Q_\omega}\left[ F(\omega \otimes_t W) - (1-t)\int_0^1 g\left(t+s(1-t),q^Q(t+s(1-t),\omega \otimes_t W)\right)ds\right] \\
	&= \E^{Q_\omega}\left[ F(\omega \otimes_t W) - \int_0^1 g^{(t)}(s,q^{Q_\omega}(s,W))ds\right],
\end{align*}
with the last line simply using the definition of $g^{(t)}$. This completes the proof of \eqref{pf:timedep-ineq1}.

Finally, we prove the reverse, namely that 
\begin{align}
Y(t,\omega) \ge \rho^{g^{(t)}}(F(\omega \otimes_t \cdot)). \label{pf:timedep-ineq2}
\end{align}
First, note that the definition of the operation $\otimes_t$ entails that, for each $Q$, the function of $\omega$ on the right-hand side of \eqref{pf:otimes-omega1} is $\F_t$-measurable.
Using \cite[Proposition 7.50]{BertsekasShreve}, we may find an $\F_t$-measurable map $\C \ni \omega \mapsto Q_\omega \in \Q$ such that
\begin{align}
\E^{Q_\omega}\left[ F(\omega \otimes_t W) - \int_0^1g^{(t)}(s,q^{Q_\omega}(s,W))ds\right] \ge \rho^{g^{(t)}}(F(\omega \otimes_t \cdot)) - \epsilon, \label{pf:rho gt eps bound}
\end{align}
for each $\omega \in \C$. Define $Q$ by setting 
\[
\frac{dQ}{d\W}(\omega) = \frac{dQ_\omega}{d\W}(\omega^{(t)}).
\]
The $\F_t$-measurability of $\omega \mapsto Q_\omega$ and the independence of $W^{(t)}$ and $\F_t$ under $\W$ together ensure that $\W(d\omega)$ indeed integrates the right-hand side to $1$, so that $Q \in \P(\C)$ is well defined. Using the same facts, it is straightforward to check that $Q \in \Q_t$; indeed, if $S \in \F_t$ then
\begin{align*}
Q(S) &= \E\left[\frac{dQ}{d\W}1_S(W)\right] = \E\left[\frac{dQ_W}{d\W}(W^{(t)})1_S(W)\right] \\
	&= \int_\C \E\left[\frac{dQ_\omega}{d\W}(W^{(t)})\right] 1_S(\omega)\W(d\omega)  = \int_\C 1_S(\omega)\W(d\omega)  = \W(S).
\end{align*}
As argued in the previous paragraph, $\omega \mapsto Q_\omega$ is a version of $Q(W^{(t)} \in \cdot \, | \, \F_t)(\omega)$, and we have
\begin{align*}
q^{Q_\omega}(s,\overline\omega) = \sqrt{1-t}q^Q(t+s(1-t),\omega \otimes_t \overline\omega),
\end{align*}
for $\omega,\overline\omega \in \C$. 
Using \eqref{pf:rho gt eps bound}, the definition of $g^{(t)}$, and a change of variables, we find
\begin{align*}
&\rho^{g^{(t)}}(F(\omega \otimes_t \cdot)) \\ 
	&\le \epsilon + \E^{Q}\left[F(\omega \otimes_t W^{(t)}) - \int_0^1g^{(t)}(s,q^{Q_\omega}(s,W^{(t)}))ds \, \Big| \, \F_t\right](\omega) \\
	&=  \epsilon + \E^{Q}\left[F(\omega \otimes_t W^{(t)}) - (1-t)\int_0^1g(t+s(1-t),q^{Q}(t+s(1-t),\omega \otimes_t W^{(t)}))ds \, \Big| \, \F_t\right](\omega) \\
	&= \epsilon + \E^{Q}\left[F(\omega \otimes_t W^{(t)}) - \int_t^1g(s,q^{Q}(s,\omega \otimes_t W^{(t)}))ds \, \Big| \, \F_t\right](\omega).
\end{align*}
Comparing this to the expression \eqref{pf:Y exp 1}, the proof of \eqref{pf:timedep-ineq2} is complete.
\end{proof}

We now give the proof of Theorem \ref{thm HL}.
In the following, define $\C_0[t,1]$ to be the set of continuous paths $\omega : [t,1] \to \R$ with $\omega(t)=0$. For $\omega \in \C$ and $\overline\omega \in \C_0[t,1]$, define $\omega \oplus_t \overline\omega \in \C$ by
\[
\omega \oplus_t \overline\omega(s) = \omega(s\wedge t) + \overline\omega(s)1_{[t,1]}(s).
\]
Recall the notation $h_n(t,q)=h(t,q/\sqrt{n})$. \\

\noindent\textbf{Proof of Theorem \ref{thm HL}}.
The case $t=1$ is trivial. Indeed, then $u_n(1,\omega) = Y_n(1,\sqrt{n}\omega) = F(\omega)$ for each $n$, which is seen to equal $u(1,\omega)=F(\omega)$. Assume henceforth that $t \in [0,1)$.
Note first that $(g^{(t)})_n = (g_n)^{(t)} =: g^{(t)}_n$. We let
\begin{align*}
u_n(t,\omega) &:= \rho^{g^{(t)}_n}\left(F\left(\omega \otimes_t \frac{W}{\sqrt{n}}\right)\right).
\end{align*}
Using Lemma \ref{le:BSDE-timedep}, we also have almost surely $Y_n(t,\omega) = u_n(t,\omega/\sqrt{n})$.
Since $F(\omega \otimes_t \cdot)$ is bounded and continuous, we may apply Theorem \ref{co:Schilder-finaltime} to get
\begin{align*}
\lim_{n\to\infty}u_n(t,\omega) &= \sup_{\overline\omega \in \C_0}\left(F(\omega \otimes_t \overline\omega) - \int_0^1g^{(t)}\left(s,\dot{\overline\omega}(s)\right)ds\right) \\
	&= \sup_{\overline\omega \in \C_0}\left(F(\omega \otimes_t \overline\omega) - (1-t)\int_0^1g\left(t+s(1-t),\frac{\dot{\overline\omega}(s)}{\sqrt{1-t}}\right)ds\right) \\
	&= \sup_{\overline\omega \in \C_0}\left(F(\omega \otimes_t \overline\omega) - \int_t^1g\left(s,\frac{1}{\sqrt{1-t}}\dot{\overline\omega}\left(\frac{s-t}{1-t}\right)\right)ds\right).
\end{align*}
Given $\overline\omega \in \C_0=\C_0[0,1]$, we may define
$\widetilde\omega \in \C_0[t,1]$ by $\widetilde\omega(s) \mapsto \sqrt{1-t}\,\overline\omega\left(\frac{s-t}{1-t}\right)$. Then $\omega \otimes_t \overline\omega = \omega \oplus_t \widetilde\omega$, and the map $\overline\omega \mapsto \widetilde\omega$ defines a bijection from $\C_0$ to $\C_0[t,1]$. Hence, the above reduces to $u(t,\omega)$.

To prove the final claim, let us first assume that $F$ is uniformly continuous. Using the fact that a convex risk measure is always $1$-Lipschitz with respect to the supremum norm (e.g.\ \cite[Lemma 4.3]{FS3dr}) we get
\begin{align*}
|Y_n(t,\omega) - Y_n(t,0)| &= |u_n(t,\omega/\sqrt{n}) - u_n(t,0)| \\
	&\le \left\|F\left(\frac{1}{\sqrt{n}}(\omega \otimes_t \cdot)\right) - F\left(\frac{1}{\sqrt{n}}(0 \otimes_t \cdot)\right)\right\|_\infty,
\end{align*}
which converges to zero by uniform continuity. This and the convergence for $u_n$ settles the uniformly continuous case.

Now, if $F$ is merely continuous, it is nevertheless the pointwise increasing limit of a sequence of bounded uniformly continuous (even Lipschitz) functions. Observing that both $Y_n(t)$ and $u(t,0)$ are increasing functions of $F$, we easily conclude from the uniformly continuous case that
\begin{align*}
\liminf_{n\to\infty}Y_n(t) \geq u(t,0), \ \ a.s.
\end{align*}
On the other hand, there is a uniformly bounded sequence $(F_m)$ of uniformly continuous functions decreasing to $F$. This time we can conclude that
\begin{align*}
\limsup_{n\to\infty}Y_n(t) \leq \inf_m \sup_{ \overline\omega \in \C_0[t,1] } \left( F_m( 0 \oplus_t \overline\omega ) - \int_t^1g(s, \dot{\overline\omega}(s))ds \right) \ \ a.s.
\end{align*}
It remains to bound the right-hand side from above by $u(t,0)$. For each $m \in \N$ find $\omega_m \in \C_0[t,1]$ such that
$$ \sup_{ \overline\omega \in \C_0[t,1] } \left( F_m( 0 \oplus_t \overline\omega ) - \int_t^1g(s, \dot{\overline\omega}(s))ds \right) \leq \frac{1}{m} + F_m( 0 \oplus_t \omega_m ) - \int_t^1g(s, \dot{\omega}_m(s))ds.$$ Since $(F_m)$ is uniformly bounded, we deduce (as we did for \eqref{pf:schilder-expbound} in the proof of Theorem \ref{co:Schilder-finaltime})
$$\sup_{m \in \N}\int_t^1g(s, \dot{\omega}_m(s))ds < \infty.$$
It is a consequence of Lemma \ref{le:H1-tightness} that there exists $\omega\in \C_0[t,1]$ absolutely continuous and such that for a subsequence (which we do not track) $\omega_m \to \omega$ uniformly, and $\liminf_m \int_t^1 g(s,\dot{\omega}_m(s))ds \geq \int_t^1 g(s,\dot{\omega}(s))ds $. 
On the other hand, since $F_m$ decreases pointwise to $F$, we have $F_m(0\oplus_t \omega_m)\to F(0\oplus_t \omega)$ by Dini's theorem.
We conclude that
$$\inf_m \sup_{ \overline\omega \in \C_0[t,1] } \left( F_m( 0 \oplus_t \overline\omega ) - \int_t^1g(s, \dot{\omega}(s))ds \right) \leq  F(0\oplus_t\omega) - \int_t^1 g(s,\dot{\omega}(s))ds \le u(t,0),$$
which completes the proof.\hfill\qedsymbol \\

\noindent\textbf{Proof of Theorem \ref{thm: Sanov time}}
The case $t=1$ is trivial: Because $(W_{(n,k)})_{k=1}^n$ are independent Wiener processes under $\W$, we conclude from the law of large numbers that $Y_n(1) = F\left(\frac{1}{n}\sum_{k=1}^n\delta_{W_{(n,k)}}\right)$ converges a.s.\ to $F(\W)$.

Henceforth, assume $t < 1$, so that $\lfloor nt \rfloor < n$ for all $n \in \N$.
First notice that
\begin{align*}
G_n^{(t)}(s,q) &:= (G_n)^{(t)}(s,q) = (1-t)G_n\left(t + s(1-t),\frac{q}{\sqrt{1-t}}\right) \\
	&= (1-t)g\left(nt + ns(1-t) - \lfloor nt + ns(1-t)\rfloor, \frac{q}{\sqrt{n(1-t)}}\right).
\end{align*}
Plugging in $t_n:=\lfloor nt\rfloor/n$, we find
\begin{align}
G_n^{(t_n)}(s,q) &= (1-t_n)g\left(\lfloor nt\rfloor  + s(n-\lfloor nt\rfloor) - \lfloor \lfloor nt\rfloor + s(n-\lfloor nt\rfloor)\rfloor, \frac{q}{\sqrt{n-\lfloor nt \rfloor}}\right) \nonumber \\
	&= (1-t_n)g\left(s(n-\lfloor nt\rfloor) - \lfloor s(n-\lfloor nt\rfloor)\rfloor, \frac{q}{\sqrt{n-\lfloor nt \rfloor}}\right) \nonumber \\
	&= (1-t_n)G_{n-\lfloor nt\rfloor}(s,q), \label{pf:timedep-Sanov-1}
\end{align}
where the second line used the identity $\lfloor k + c\rfloor = k +\lfloor c\rfloor$, valid for any integer $k$ and any $c \in \R$.
Define $L_n : \C \to \P(C)$ by
\[
L_n(\omega) := \frac{1}{n}\sum_{k=1}^n\delta_{\omega_{(n,k)}}.
\]
Using Lemma \ref{le:BSDE-timedep}, we write
\begin{align*}
Y_n(t_n,\omega) = \rho^{G^{(t_n)}_n}\left(F \circ L_n\left(\omega \otimes_{t_n} W\right)\right).
\end{align*}
Note that $(\omega \otimes_{t_n} W)_{(n,k)} \equiv \omega_{(n,k)}$ if $k \le nt_n = \lfloor nt \rfloor$, while for $k \ge \lfloor nt \rfloor + 1$ and $s \in [0,1]$ we have
\begin{align*}
(\omega \otimes_{t_n} W)_{(n,k)}(s) &= \sqrt{n(1-t_n)}\left(W\left(\frac{\frac{k-1+s}{n} - t_n}{1-t_n}\right) - W\left(\frac{\frac{k-1}{n} - t}{1-t_n}\right)\right) \\
	&= \sqrt{n - \lfloor nt \rfloor}\left(W\left(\frac{k-1+s - \lfloor nt \rfloor}{n - \lfloor nt \rfloor}\right) - W\left(\frac{k-1 - \lfloor nt \rfloor}{n - \lfloor nt \rfloor}\right)\right) \\
	&= W_{(n - \lfloor nt \rfloor, k - \lfloor nt \rfloor)}(s).
\end{align*}
Hence,
\begin{align}
L_n(\omega \otimes_{t_n} W) &= \frac{1}{n}\sum_{k=1}^{\lfloor nt \rfloor} \delta_{\omega_{(n,k)}} + \frac{1}{n}\sum_{k= \lfloor nt \rfloor + 1}^n\delta_{W_{(n - \lfloor nt \rfloor, k - \lfloor nt \rfloor)}} \nonumber \\
	&= t_n\frac{1}{\lfloor nt \rfloor}\sum_{k=1}^{\lfloor nt\rfloor}\delta_{\omega_{(n,k)}} + (1-t_n)L_{n - \lfloor nt \rfloor}(W). \label{pf:BSDE-Sanov-identity}
\end{align}
Assume first that $F$ is uniformly continuous. Under $\W$, $\omega_{(n,k)}$ for $k=1,\ldots,n$ are independent Brownian motions, and so as $n\to\infty$ the first term converges $\W$-a.s.\ by the law of large numbers to $t\W$.
Hence, it holds for $\W$-a.e.\ $\omega$ that {the existence of the limit
\begin{align*}
\lim_{n\to\infty}Y_n(t_n,\omega) &= \lim_{n\to\infty}\rho^{G^{(t_n)}_n}\left(F \circ L_n\left(\omega \otimes_{t_n} W\right)\right) ,
\end{align*}
is equivalent to the existence of the limit
	\begin{align*} 
	\lim_{n\to\infty}\rho^{G^{(t_n)}_n}\left(F\left(t\W + (1-t)L_{n - \lfloor nt \rfloor}(W)\right)\right),
\end{align*}
and if any of these exist, then they are equal. Indeed, from} the $1$-Lipschitz continuity of convex risk measures  \cite[Lemma 4.3]{FS3dr}, we have
\begin{align*}
&\left|\rho^{G^{(t_n)}_n}\left(F \circ L_n\left(\omega \otimes_{t_n} W\right)\right) - \rho^{G^{(t_n)}_n}\left(F\left(t\W + (1-t)L_{n - \lfloor nt \rfloor}(W)\right)\right)\right| \\
	&\quad \le \left\|F \circ L_n\left(\omega \otimes_{t_n} \cdot\right) - F\left(t\W + (1-t)L_{n - \lfloor nt \rfloor}(\cdot)\right)\right\|_\infty,
\end{align*}
with the right-hand side converging to zero thanks to the uniform continuity and boundedness of $F$, the law of large numbers, and the identity \eqref{pf:BSDE-Sanov-identity}.
Using this, equation \eqref{pf:timedep-Sanov-1}, and Theorem \ref{th:mainlimit} we compute the limit,
\begin{align*}
\lim_{n\to\infty}Y_n(t_n,\omega) &= \lim_{n\to\infty}\rho^{G^{(t_n)}_n}\left(F\left(t\W + (1-t)L_{n - \lfloor nt \rfloor}(W)\right)\right) \\
	&= \lim_{n\to\infty}\rho^{(1-t_n)G_{n - \lfloor nt \rfloor}}\left(F\left(t\W + (1-t)L_{n - \lfloor nt \rfloor}(W)\right)\right) \\
	&= \lim_{n\to\infty}\rho^{(1-t)G_{n - \lfloor nt \rfloor}}\left(F\left(t\W + (1-t)L_{n - \lfloor nt \rfloor}(W)\right)\right) \\
	&= \sup_{q \in \L_b}\left(F(t\W + (1-t)Q^q) - (1-t)\E\left[\int_0^1g(s,q(s))ds\right]\right).
\end{align*}
The  third equality, in which $(1-t_n)$ is replaced by $(1-t)$ in the superscript, follows from the estimate
\begin{align}
|\rho^{ag}(f)-\rho^{bg}(f)| \le \left(\frac{3\|f\|_\infty}{a} + \sup g^- + g(0) \right)|b-a|, \label{pf:BSDE-Sanov-g-estimate}
\end{align}
valid for any $g$ satisfying (TI), any bounded measurable $f$, and any $a,b \in (0,1]$, which we justify in the next paragraph. (Here $\sup g^- := \sup_{(t,q)}\max\{0,-g(t,q)\}$.)

To prove \eqref{pf:BSDE-Sanov-g-estimate} note that by monotonicity of $\rho^g$, it holds $\rho^g(f)\le \rho^g(\|f\|_\infty)= \|f\|_\infty+ \rho^g(0)\le \|f\|_\infty + \sup g^-$ and $\rho^g(f) \ge \E [f] - g(0)\ge -\|f\|_\infty - g(0)$.
Take note also of the easy identity $\rho^{cg}(f)=c\rho^g(f/c)$, valid for $c >0$.
Thus,
\begin{align*}
 	|\rho^{ag}(f) - \rho^{bg}(f)| &\le \left | a\rho^{g}\left(\frac fa\right ) - b\rho^g\left(\frac fa \right) \right | +  \left| b\rho^g\left(\frac fa \right) - b\rho^g\left(\frac fb \right) \right|\\
 	&\le \left|\rho^g\left(\frac fa \right)\right||a - b| + b\left\|\frac f a- \frac fb\right\|_{\infty}\\
 	&\le \left(3\frac{\|f \|_\infty}{a} + \sup g^- + g(0)  \right)|a-b|.
 \end{align*} 

We have now completed the proof under the extra assumption that $F$ is uniformly continuous. To conclude, we may drop this extra assumption by essentially the same monotone approximation arguments as in the proof of Theorem \ref{thm HL}, by relying again on Lemma \ref{le:H1-tightness}.

\section{On the PDE connection}
\label{sec BSDE}

The goal of this section is to briefly elaborate on the PDE results of Section \ref{sec:lim thm pde}. 
The basic lemma linking the functionals $\rho^g$ with PDEs is the following:

\begin{lemma} \label{le:minimalviscositysupersolution}
Let $f : \R^d \rightarrow \R$ be bounded and lower semicontinuous. Then the parabolic PDE
\begin{equation}
\label{eq:pde lemma}
		\begin{cases}
			\partial_tv(t,x) + \frac 12 \Delta v(t,x) + g(t, \nabla v(t,x)) = 0 \quad \text{on } [0,1]\times \R^d\\
			 v(1, x) = f(x), \quad \text{for } x\in \R^d
		\end{cases}
\end{equation}
admits  a minimal viscosity supersolution $v$. Moreover, $\rho^g(f(W(1)))=v(0,0)$.

{If $f\in C_b(\mathbb{R}^d)$ and $g^*(t,\cdot)$ is differentiable and there is a constant $C\ge 0$ such that 
\begin{equation*}
	|g^*(t,z)| \le C(1 + |z|^2) \quad \text{and}\quad |\partial_zg^*(t,z)|\le C(1 + |z|), \quad z \in \mathbb{R}^d,
\end{equation*}
then $v$ is the unique viscosity solution of \eqref{eq:pde lemma}.}
\end{lemma}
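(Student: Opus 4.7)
The plan is to lift everything through the BSDE representation \eqref{eq rho = Y} and then invoke the nonlinear Feynman--Kac correspondence. Since $f$ is bounded below and lower semicontinuous, by assumption (TI) and \cite[Theorem 4.17]{DHK1101} the BSDE with driver $g^*$ and terminal condition $X = f(W(1))$ admits a unique minimal supersolution $(\bar Y, \bar Z)$, and by \eqref{eq rho = Y} we have $\rho^g(f(W(1))) = \bar Y(0)$. Because the terminal condition only depends on $W(1)$, Markovianity of $W$ and a flow argument (running the BSDE from $(t,x)$ rather than $(0,0)$) produce a measurable $v : [0,1] \times \R^d \to \R$, bounded from below, with $\bar Y(t) = v(t,W(t))$ a.s.

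The existence of the minimal viscosity supersolution and the identification $\bar Y(t) = v(t,W(t))$ is exactly the content of \cite[Theorem 5.2]{Drap-Main16}, which I would simply invoke. The key ingredients there are a dynamic programming property for the minimal supersolution (essentially the conditional analogue of Lemma \ref{lem:rep Qt}) together with a stability argument showing that the pointwise infimum of a family of viscosity supersolutions, once passed through its lower semicontinuous envelope, remains a viscosity supersolution. For minimality, given any other viscosity supersolution $\tilde v$ of \eqref{eq:pde lemma} one applies It\^o's formula to a mollification of $\tilde v$ to realise $(\tilde v(\cdot, W(\cdot)), Z)$ as a BSDE supersolution for suitable $Z$, so that the defining property of $(\bar Y, \bar Z)$ yields $\tilde v \ge v$. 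Evaluating at $(0,0)$ then gives $v(0,0) = \bar Y(0) = \rho^g(f(W(1)))$.

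For the final uniqueness statement, the extra hypotheses place the BSDE squarely in the Kobylanski \cite{kobylanski01} quadratic framework: for bounded continuous $f$, the BSDE with driver $g^*$ and terminal $f(W(1))$ admits a unique bounded solution $(Y,Z)$, which by Remark \ref{rem: solution case} coincides with $(\bar Y, \bar Z)$. The quadratic bound on $g^*$ combined with the linear growth of $\partial_z g^*$ is precisely what is needed both to run the standard nonlinear Feynman--Kac theorem (see \cite{Pardoux-Peng92,Pardoux-Tang99}), producing a viscosity solution that must equal $v$, and to obtain a comparison principle for bounded viscosity sub/supersolutions of \eqref{eq:pde lemma}. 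Comparison together with existence forces $v$ to be the unique bounded viscosity solution.

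The main obstacle is this viscosity comparison principle in the quadratic-gradient regime: since $g^*(t,\cdot)$ is not globally Lipschitz, the off-the-shelf Crandall--Ishii--Lions comparison does not apply. The standard workaround is an exponential change of variable $w = e^{\lambda v}$ which, under the bound $|\partial_z g^*(t,z)| \le C(1+|z|)$, transforms the quadratic gradient term into a Lipschitz one and reduces the problem to classical comparison. An alternative, more in keeping with the BSDE-centric viewpoint of the paper, is to transfer uniqueness entirely to the BSDE side: for any bounded viscosity solution $\tilde v$, mollify and apply It\^o's formula to show $\tilde v(\cdot, W(\cdot))$ is the $Y$-component of a bounded BSDE solution, then conclude by Kobylanski uniqueness. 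Either route seals the final claim.
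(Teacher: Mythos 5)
Your proposal is correct and follows essentially the same route as the paper: existence of the minimal viscosity supersolution and the identification with the minimal BSDE supersolution are taken from \cite[Theorem 5.2]{Drap-Main16}, the value at $(0,0)$ comes from \eqref{eq rho = Y}, and the uniqueness claim under the quadratic-growth hypotheses is settled by Kobylanski's existence and comparison results \cite{kobylanski01} (your exponential change of variables is exactly the device used there). The extra detail you give on the internals of the cited theorems is consistent with, but not needed beyond, the paper's two-line argument.
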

\begin{proof}
The existence of a minimal viscosity supersolution $v$ is shown in \cite[Theorem 5.2]{Drap-Main16}, where it is also shown that $v(0,0)=Y(0)$, where $(Y,Z)$ is the minimal supersolution of the BSDE \eqref{eq:bsde}. To complete the proof, simply recall from \eqref{eq rho = Y} that $Y(0)=\rho^g(f(W(1)))$.
{ When $g^*$ is of quadratic growth and $f\in C_b(\mathbb{R}^d)$, the existence of a unique viscosity solution $u$ follows by \cite[Theorems 3.2 and 3.8]{kobylanski01}.
By comparison, $v = u$.}
\end{proof}

Now, for each integer $n \ge 1$, consider the operator $\mathbb L_n$, taking bounded lower semicontinuous functions on $(\R^d)^n$ to bounded lower semicontinuous functions on $(\R^d)^{n-1}$, as follows. Given $F : (\R^d)^n \to \R$ and $(x_1,\ldots,x_{n-1}) \in (\R^d)^{n-1}$, we define $\mathbb L_nF(x_1,\dots,x_{n-1}) := v(0,0)$, where $v=v(t,x)$ is the minimal viscosity supersolution of the PDE
\begin{equation}
	\label{eq:pde n fold bla}
		\begin{cases}
			\partial_t v(t,x) + \frac 12 \Delta v(t,x) + g(t,\nabla v(t,x)) = 0 \quad \text{on } [0,1]\times \R^{d}\\
			 v(1, x) = F(x_1,\ldots,x_{n-1},x), \quad \text{for } x\in \R^d.
		\end{cases}
\end{equation}
By Lemma \ref{le:minimalviscositysupersolution}, the minimal viscosity supersolution $v$ exists, and we have $$\rho^g(F(x_1,\ldots,x_{n-1},W(1))) = v(0,0).$$
By definition,
\begin{align*}
\rho^g(F(x_1,\ldots,x_{n-1},W(1))) = \sup_{Q \in \Q}\E^Q\left[F\left(x_1,\ldots,x_{n-1},W(1) + \int_0^1q^Q(t)dt\right) - \int_0^1g(t,q^Q(t))dt\right].
\end{align*}
Because $F$ is lower semicontinuous and bounded, this exhibits $\rho^g(F(x_1,\ldots,x_{n-1},W(1)))$ as the supremum of lower semicontinuous functions of $(x_1,\ldots,x_{n-1})$. Hence, $\mathbb L_n$ is well defined and indeed maps bounded lower semicontinuous functions of $(\R^d)^n$ to bounded lower semicontinuous functions of $(\R^d)^{n-1}$.
For $n=1$, we interpret $\mathbb L_1$ as mapping from bounded lower semicontinuous functions of $\R^d$ to real numbers.
The composition $\mathbb L_1\cdots\mathbb L_{n-1}\mathbb L_n$ then maps a function on $(\R^d)^n$ to a real number. 

\begin{proposition} \label{pr:PDEiteration}
For a function $F \in C_b(\P(\R^d))$, define $F^n : (\R^d)^n \rightarrow \R$ by
\[
F^n(x_1,\dots,x_n):= nF\left( \frac{1}{n}\sum_{i=1}^n\delta_{x_i} \right).
\]
Then, defining $Q^q_1 := \W \circ (W(1) + \int_0^1q(t)dt)^{-1}$ for $q \in \L_b$ as the time-$1$ marginal of $Q^q$,
\begin{align}
\lim_{n\rightarrow\infty}\frac{1}{n}\mathbb L_1\cdots\mathbb L_{n-1}\mathbb L_nF^n = \sup_{q \in \L_b}\left(F(Q^q_1) - \E\left[\int_0^1g(t,q(t))dt\right]\right). \label{eq:limit pde mfg}
\end{align}
\end{proposition}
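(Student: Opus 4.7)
The plan is to reduce the iteration of operators $\mathbb{L}_n$ to the iteration $\rho^g_n$ defined in Section \ref{sec: proof Limit Thms}, then apply the scaled representation of Proposition \ref{pr:rhon} and the limit Theorem \ref{th:mainlimit}.

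The first step is to pass from functions on $(\mathbb{R}^d)^n$ to functions on $\C^n$: given $f : (\R^d)^n \to \R$ bounded and lower semicontinuous, define its lift $\widetilde f : \C^n \to \R$ by $\widetilde f(\omega_1,\ldots,\omega_n) := f(\omega_1(1),\ldots,\omega_n(1))$. Lemma \ref{le:minimalviscositysupersolution} yields the identity
\[
\mathbb{L}_n f(x_1,\ldots,x_{n-1}) = \rho^g\bigl(f(x_1,\ldots,x_{n-1},W(1))\bigr) = \rho^g\bigl(\widetilde f(\omega_1,\ldots,\omega_{n-1},\cdot)\bigr)\big|_{\omega_i(1)=x_i},
\]
so $\mathbb{L}_n f$ lifts to the inner function appearing in the definition \eqref{eq iterates g} of $\rho^g_n$, evaluated on the lift $\widetilde f$. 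An immediate induction (using the fact that $\rho^g$ of a lower semicontinuous bounded function is again lower semicontinuous in its parameters, which follows from Fatou applied to the representation in \eqref{eq def alpha rho}, ensuring the iteration remains well-defined) gives
\[
\mathbb{L}_1\cdots\mathbb{L}_n f = \rho^g_n(\widetilde f).
\]

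The second step is to apply this to $f = F^n$. For $F \in C_b(\P(\R^d))$, the lift of $F^n$ is
\[
\widetilde{F^n}(\omega_1,\ldots,\omega_n) = nF\!\left(\tfrac{1}{n}\sum_{i=1}^n\delta_{\omega_i(1)}\right) = n\,\widetilde F\!\left(\tfrac{1}{n}\sum_{i=1}^n\delta_{\omega_i}\right),
\]
where $\widetilde F \in C_b(\P(\C))$ is defined by $\widetilde F(\mu) := F(\mu\circ\mathrm{ev}_1^{-1})$ with $\mathrm{ev}_1(\omega)=\omega(1)$. Applying Proposition \ref{pr:rhon},
\[
\tfrac{1}{n}\,\mathbb{L}_1\cdots\mathbb{L}_n F^n \;=\; \tfrac{1}{n}\,\rho^g_n\bigl(\widetilde{F^n}\bigr) \;=\; \rho^{G_n}\!\left(\widetilde F\!\left(\tfrac{1}{n}\sum_{k=1}^n\delta_{W_{(n,k)}}\right)\right).
\]

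The third and final step is to apply Theorem \ref{th:mainlimit} to the right-hand side (noting that $\widetilde F \in C_b(\P(\C))$ since $\mathrm{ev}_1$ is continuous). This gives
\[
\lim_{n\to\infty}\tfrac{1}{n}\,\mathbb{L}_1\cdots\mathbb{L}_n F^n = \sup_{q \in \L_b}\left(\widetilde F(Q^q) - \E\!\left[\int_0^1 g(t,q(t))dt\right]\right),
\]
and since $\widetilde F(Q^q) = F(Q^q\circ\mathrm{ev}_1^{-1}) = F(Q^q_1)$ by definition of $Q^q_1$, this is exactly the right-hand side of \eqref{eq:limit pde mfg}.

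The main conceptual obstacle is verifying step one cleanly: one must track that at each iteration the intermediate function remains bounded and lower semicontinuous, so that Lemma \ref{le:minimalviscositysupersolution} continues to apply and so that it indeed agrees with the $\rho^g_n$ iteration (for which no such regularity was needed in its abstract definition). This is straightforward from Fatou's lemma and the supremum definition of $\rho^g$, but deserves explicit mention so that the identification $\mathbb{L}_1\cdots\mathbb{L}_n f = \rho^g_n(\widetilde f)$ is rigorously justified before invoking Proposition \ref{pr:rhon} and Theorem \ref{th:mainlimit}.
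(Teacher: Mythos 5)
Your proposal is correct and follows essentially the same route as the paper: identify $\mathbb{L}_1\cdots\mathbb{L}_nF^n$ with $\rho^g_n$ applied to the lift $\widetilde{F}^n(\omega_1,\ldots,\omega_n)=F^n(\omega_1(1),\ldots,\omega_n(1))$ (with the same lower-semicontinuity bookkeeping via the supremum representation of $\rho^g$ that the paper carries out when defining $\mathbb L_n$), then invoke Proposition \ref{pr:rhon} and conclude with Theorem \ref{th:mainlimit} applied to $\mu\mapsto F(\mu\circ \mathrm{ev}_1^{-1})$. Your step making the composition with $\mathrm{ev}_1$ and the identity $\widetilde F(Q^q)=F(Q^q_1)$ explicit is merely a spelled-out version of the paper's closing "conclude from Theorem \ref{th:mainlimit}."
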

\begin{proof}
Recall the definition of $\rho^g_n$ from Section \ref{sec: stoch rep rho n}.
For a bounded lower semicontinuous function $f$ on $(\R^d)^n$, define $\widetilde{f} \in B_b(\C^n)$ by setting $\widetilde{f}(\omega_1,\ldots,\omega_n) = f(\omega_1(1),\ldots,\omega_n(1))$, and note that we have
\[
\mathbb L_1\cdots\mathbb L_{n-1}\mathbb L_nf = \rho^g_n(\widetilde{f}) = n\rho^{G_n}\left(\frac{1}{n}\widetilde{f}(W_{(n,1)},\ldots,W_{(n,n)})\right).
\]
Indeed, the first equality is just the definition of $\rho^g_n$, while the second is Proposition \ref{pr:rhon}.
In particular, we may write
\[
\widetilde{F}^n(\omega_1,\ldots,\omega_n) = F^n(\omega_1(1),\ldots,\omega_n(1)) = n\,F\left( \frac{1}{n}\sum_{i=1}^n\delta_{\omega_i(1)} \right),
\]
and thus
\begin{align*}
\frac{1}{n}\mathbb L_1\cdots\mathbb L_{n-1}\mathbb L_nF^n &= \rho^{G_n}\left(\widetilde{F}^n(W_{(n,1)},\ldots,W_{(n,n)})\right) = \rho^{G_n}\left(F\left(\frac{1}{n}\sum_{k=1}^n\delta_{W_{(n,k)}(1)}\right)\right).
\end{align*}
Conclude from Theorem \ref{th:mainlimit}.
\end{proof}

\begin{remark}
The right-hand side of \eqref{eq:limit pde mfg} can be further rewritten as $\sup_{\nu\in\mathcal P(\R^d)}\left\{ F(\nu)- I(\nu) \right\}$, where $I(\nu):=\inf \left\{\E^Q[\int_0^1g(t,q^Q(t))dt] :\, Q\in\mathcal P^*_1(\C),\, Q\circ \omega(1)^{-1}=\nu \right\}$ is a Schr\"odinger-type problem under the classical observable (as discussed in Section \ref{sec:schilder.to.schroe}).
\end{remark}
We finally turn our attention to the formalization of the heuristics given in Section \ref{sec PDE Schilder}. The novelty here lies in the ``stochastic'' proof, involving our BSDE limit theorems  which allow to bypass the regularity conditions often made on the coefficients of the PDE.
\begin{proposition}
	\label{prop:hopf-lax}
		Let $f \in C_b(\mathbb{R}^d)$.
		The PDE
		\begin{equation}\label{eq HJB un}
		\begin{cases}
			\partial_tu_n(t,x) + \frac{1}{2n} \Delta u_n(t,x) + g^*(t, \nabla u_n(t,x)) = 0 \quad \text{on } [0,1]\times \R^d\\
			 u_n(1, x) = f(x), \quad \text{for } x\in \R^d.
		\end{cases}
		\end{equation}
		admits a minimal viscosity supersolution $u_n$.
		Moreover, $u_n\to u$ pointwise, where $u$ is the function given by
		\begin{equation*}
			u(t,x) = \sup_{\omega \in \C_{0}[t,1]}\left( f(x + \omega(1)) - \int_t^1g(s, \dot\omega(s))\,ds \right).
		\end{equation*}
		When $g(t, q) = g(q)$ does not depend on $t$, then the function $u$ reduces to the Hopf-Lax-Oleinik formula \eqref{eq:hopf lax form} and if in addition $g$ is real-valued and $f$ Lipschitz continuous, then $u$ is the unique viscosity solution of \eqref{eq HJ u}.
\end{proposition}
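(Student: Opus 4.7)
The plan is to deduce the proposition from three ingredients: Lemma \ref{le:minimalviscositysupersolution} (existence and probabilistic representation of the minimal viscosity supersolution), Theorem \ref{co:Schilder-finaltime} (convergence of $\rho^{g_n}$), and a two-parameter rescaling of the PDE that reduces every point $(t, x)$ to the origin.

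For existence I would set $g_n(t, q) := g(t, q/\sqrt n)$ and $v_n(t, x) := u_n(t, x/\sqrt n)$. A chain-rule computation, combined with $g_n^*(t, z) = g^*(t, \sqrt n z)$, shows that $u_n$ satisfies \eqref{eq HJB un} if and only if $v_n$ is the minimal viscosity supersolution of the PDE of Lemma \ref{le:minimalviscositysupersolution} associated to the function $g_n$ and terminal condition $x \mapsto f(x/\sqrt n)$. Since $g_n$ still satisfies \textnormal{(TI)} (as noted in Theorem \ref{co:Schilder-finaltime}) and $f(\cdot/\sqrt n) \in C_b(\R^d)$, Lemma \ref{le:minimalviscositysupersolution} yields such $v_n$, and the inverse dilation produces the minimal viscosity supersolution $u_n$ of \eqref{eq HJB un}; minimality transfers because the dilation is a bijection on viscosity supersolutions. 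The same lemma also gives $u_n(0, 0) = v_n(0, 0) = \rho^{g_n}(f(W(1)/\sqrt n))$, and applying Theorem \ref{co:Schilder-finaltime} to the bounded continuous functional $F(\omega) := f(\omega(1))$ delivers $\lim_n u_n(0, 0) = \sup_{\omega \in \C_0}\bigl(f(\omega(1)) - \int_0^1 g(s, \dot\omega(s))\,ds\bigr) = u(0, 0)$.

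For convergence at a general $(t_0, x_0) \in [0, 1) \times \R^d$ I would reduce to the origin via the reparametrization $\tilde u_n(s, y) := u_n(t_0 + (1-t_0) s,\, x_0 + \sqrt{1-t_0}\, y)$; another chain-rule computation shows that $\tilde u_n$ is the minimal viscosity supersolution of a PDE of the same form as \eqref{eq HJB un}, but with $g$ replaced by $g^{(t_0)}(s, q) := (1-t_0)\, g\bigl(t_0 + (1-t_0) s,\, q/\sqrt{1-t_0}\bigr)$ and $f$ by $\tilde f(y) := f(x_0 + \sqrt{1-t_0}\, y)$. Since $g^{(t_0)}$ still satisfies \textnormal{(TI)}, the previous step gives $\lim_n \tilde u_n(0, 0) = \sup_{\omega \in \C_0}\bigl(\tilde f(\omega(1)) - \int_0^1 g^{(t_0)}(s, \dot\omega(s))\,ds\bigr)$; the bijection $\omega \mapsto \tilde\omega$ with $\tilde\omega(t) := \sqrt{1-t_0}\, \omega((t-t_0)/(1-t_0))$, mapping $\C_0[0,1]$ onto $\C_0[t_0, 1]$, rewrites this as $u(t_0, x_0)$. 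The case $t_0 = 1$ is immediate from $u_n(1, x_0) = f(x_0) = u(1, x_0)$.

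The last claims are routine. When $g$ is time-independent, Jensen's inequality gives $\int_t^1 g(\dot\omega(s))\,ds \ge (1-t)\, g(\omega(1)/(1-t))$ with equality on affine paths, so the supremum in $u(t, x)$ is attained on straight-line paths, yielding \eqref{eq:hopf lax form}. Under the further hypotheses that $g$ is real-valued and $f$ is Lipschitz, $g^*$ is a continuous convex Hamiltonian with superlinear growth and $f$ provides Lipschitz initial data, so classical Hopf-Lax-Oleinik theory (e.g., Evans, \emph{Partial Differential Equations}, Ch.~3) guarantees that \eqref{eq:hopf lax form} is the unique viscosity solution of \eqref{eq HJ u}. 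The main technical obstacle is the bookkeeping in the first two steps: verifying that \textnormal{(TI)} and the minimality of the viscosity supersolution are preserved under the spatial dilation and the subsequent affine time-space reparametrization.
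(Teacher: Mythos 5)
Your argument is correct, but it takes a genuinely different route from the paper. The paper's proof is purely ``stochastic'': for fixed $(t,x)$ it invokes \cite{Drap-Main16} to write $u_n(t,x)=Y_n(t)$, where $(Y_n,Z_n)$ is the minimal supersolution of the BSDE with generator $g_n^*$ and terminal condition $f\bigl(x+\tfrac{1}{\sqrt n}(W(1)-W(t))\bigr)$, and then applies Theorem \ref{thm HL} (with $F(\omega)=f(x+\omega(1)-\omega(t))$, using that $Y_n(t)$ is deterministic) to get the limit at every time $t$ in one stroke; the time-$t$ reduction is thus hidden inside Lemma \ref{le:BSDE-timedep}, which is the probabilistic counterpart of your affine reparametrization. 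You instead bypass Theorem \ref{thm HL} entirely: you only use the time-zero identity of Lemma \ref{le:minimalviscositysupersolution} together with Theorem \ref{co:Schilder-finaltime}, and you perform the reduction to $(0,0)$ deterministically, first by the spatial dilation $u_n(t,x)=v_n(t,\sqrt n x)$ (note $(g_n)^*(t,z)=g^*(t,\sqrt n z)$, so your PDE bookkeeping is consistent with the BSDE generator $g^*$ even though the lemma's display writes $g$), and then by the change of variables that replaces $g$ by $g^{(t_0)}$ and $f$ by $f(x_0+\sqrt{1-t_0}\,\cdot)$ --- exactly the rescaling the paper uses, but at the PDE rather than the BSDE level; your path-space change of variables $\omega\mapsto\sqrt{1-t_0}\,\omega((\cdot-t_0)/(1-t_0))$ then identifies the limit with $u(t_0,x_0)$, and the Jensen/Hopf--Lax and uniqueness claims match the paper (which cites \cite[Theorem 10.3]{Evans1998}). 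What your route buys is independence from the heavier Theorem \ref{thm HL}; what it costs is the extra bookkeeping you yourself flag: you should verify explicitly that both affine changes of variables map viscosity supersolutions bijectively onto viscosity supersolutions of the transformed equations and preserve the pointwise order (hence minimality), and that $g^{(t_0)}$ inherits (TI) (the paper asserts this in Section \ref{sec BSDE scaled limits}); these checks are routine and at the same level of rigor as the paper's appeal to \cite{Drap-Main16}, so there is no gap.
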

\begin{proof}
		Let $(t, x) \in [0,1]\times \mathbb{R}^d$ be fixed and put $X^{t,x}_n(s):=x + \frac{1}{\sqrt{n}}(W(s)- W(t))$, $s\ge t$.
		By \cite{Drap-Main16} the function $u_n(t,x):= Y_n(t)$ is the minimal supersolution of the PDE \eqref{eq HJB un}, where $(Y_n, Z_n)$ is the minimal supersolution of the BSDE with generator $g^*_n$ and terminal condition $f(X^{t,x}_n(1))$.
		Let $F:\C\to \mathbb{R}$ be given by $F(\omega) = f(x + \omega(1) - \omega(t))$.
		By Theorem \ref{thm HL} and the fact that $Y_n(t)$ is deterministic, it holds
		\begin{align*}
			u_n(t,x) = Y_n(t)& \to  \sup_{ \omega \in \C_0[t,1] } \left( F( 0 \oplus_t \omega ) - \int_t^1g(s, \dot\omega(s))ds \right)\\
							 & = \sup_{ \omega \in \C_0[t,1] } \left( f( x + \omega(1) ) - \int_t^1g(s, \dot\omega(s))ds \right) = u(t,x).
		\end{align*}
Now, when $g$ is time-independent, the Hopf-Lax-Oleinik formula \eqref{eq:hopf lax form} follows from Jensen's inequality. Granting the additional assumptions on $g$ and $f$, it is classical that the Hopf-Lax-Oleinik formula is the unique viscosity solution of \eqref{eq HJ u}; see \cite[Theorem 10.3]{Evans1998}.
\end{proof}

\section{Some extensions of the limit theorems} \label{se:extensions}

In this section we describe two extensions of the main theorems. First, we show how to strengthen the topology used in Theorems \ref{th:mainlimit} and \ref{thm: Sanov time} to the $1$-Wasserstein topology, which allows us to derive a Cram\'er-type theorem. Second, we incorporate a random initial position for $W(0)$, which has thus far been assumed to be zero.

{
\subsection{Extension to stronger topologies}
\label{sec stronger topo} 
Recall that $\mathcal{P}(\C)$ denotes the set of Borel probability measures on $\C$. Define $\mathcal{W}_1$ to be the $1$-Wasserstein metric on the space
\[
\P_1(\C) := \left\{Q \in \P(\C) : \int_\C \|\omega\|_\infty\,Q(d\omega) < \infty\right\},
\]
where we recall $\|\cdot\|_\infty$ denotes the supremum norm on $\C$. That is, $\mathcal{W}_1(Q,Q')$ is the infimum over all $\overline Q \in \P(\C \times \C)$ with marginals $Q$ and $Q'$ of the quantity $\int \|\omega-\omega'\|_\infty \overline Q(d\omega,d\omega')$. Recall that $\tilde{\alpha}^g$ was defined in \eqref{def:tildealpha}, and as usual we tacitly assume $g$ satisfies (TI).

\begin{lemma} \label{le: Wr compact level sets}
The sub-level sets of $\tilde{\alpha}^g$ are $\mathcal{W}_1$-compact. More precisely, for every $a \in \R$ the set $\Lambda_a := \{Q \in \P(\C) : \tilde{\alpha}^g(Q) \le a\}$ is contained in $\P_1(\C)$ and is compact in the $\mathcal{W}_1$-topology.
\end{lemma}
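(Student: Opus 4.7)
My plan is to upgrade the weak compactness of $\Lambda_a = \{\tilde{\alpha}^g \le a\}$ already established in Lemma \ref{thm technical} to $\mathcal{W}_1$-compactness. A standard characterization says that a weakly compact family $\Lambda \subset \mathcal{P}(\C)$ is in fact $\mathcal{W}_1$-compact (and contained in $\mathcal{P}_1(\C)$) as soon as the family $\{\|\omega\|_\infty\}_{Q \in \Lambda}$ is uniformly integrable in the sense that $\sup_{Q \in \Lambda}\E^Q[\|\omega\|_\infty 1_{\{\|\omega\|_\infty > M\}}] \to 0$ as $M \to \infty$. By the de la Vall\'ee Poussin criterion, this in turn follows from the existence of a nondecreasing superlinear $\psi:[0,\infty)\to[0,\infty)$ with $\sup_{Q \in \Lambda_a}\E^Q[\psi(\|\omega\|_\infty)] < \infty$.

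The key step is to transfer the coercivity of $g$ into a uniform superlinear integrability bound on $\int_0^1|q^Q(s)|\,ds$. Let $b \ge 0$ satisfy $g \ge -b$ and define
\[ h(r) := \inf\{g(t,q) + b : t \in [0,1],\ |q| \ge r\}, \quad r \ge 0. \]
The coercivity in (TI) provides, for every $K>0$, an $R_K$ with $g(t,q) \ge K|q|$ whenever $|q| \ge R_K$, so that $h(r) \ge Kr$ for $r \ge R_K$; in particular $h$ is nonnegative, nondecreasing and superlinear. Let $\phi$ be the greatest convex minorant of $h$. Each affine function $r \mapsto K(r - R_K)$ satisfies $K(r - R_K) \le 0 \le h(r)$ on $[0,R_K]$ and $K(r-R_K) \le Kr \le h(r)$ on $[R_K,\infty)$, hence lies below $h$ and therefore below $\phi$. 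Thus $\phi(r)/r \to \infty$, and $\phi(|q|) \le h(|q|) \le g(t,q) + b$ pointwise in $(t,q)$. For any $Q \in \Lambda_a \subset \mathcal{P}^*$, Jensen's inequality applied to the convex $\phi$ and the averaging integral $\int_0^1 \cdot\,ds$ yields
\[ \E^Q\phi\!\left(\int_0^1|q^Q(s)|\,ds\right) \le \E^Q\!\int_0^1 \phi(|q^Q(s)|)\,ds \le \tilde{\alpha}^g(Q) + b \le a + b. \]

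Since $Q \in \mathcal{P}^*$ forces $W(0) = 0$ and makes $W^Q := W - \int_0^\cdot q^Q(s)\,ds$ a $Q$-Brownian motion, Doob's $L^2$-maximal inequality yields a uniform bound $\sup_{Q \in \Lambda_a}\E^Q[\|W^Q\|_\infty^2] < \infty$. The pointwise decomposition $\|\omega\|_\infty \le \|W^Q\|_\infty + \int_0^1|q^Q(s)|\,ds$ thus exhibits $\|\omega\|_\infty$ as dominated by a sum of two families, one with a uniform $L^2$-bound and the other with a uniform $\phi$-moment bound for $\phi$ superlinear. A standard de la Vall\'ee Poussin estimate (choosing e.g.\ $\psi(r) = \min(r^2,\phi(r))$ regularized to be convex and nondecreasing) then produces a single superlinear $\psi$ with $\sup_{Q \in \Lambda_a}\E^Q[\psi(\|\omega\|_\infty)] < \infty$, which gives simultaneously $\Lambda_a \subset \mathcal{P}_1(\C)$ and the required uniform integrability. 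The main technical subtlety I anticipate is verifying that the convex envelope $\phi$ inherits superlinearity from $h$ — a priori this can fail for arbitrary superlinear functions — but the explicit coercivity-derived affine lower bounds $K(r - R_K)$ resolve this cleanly.
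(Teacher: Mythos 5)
Your argument is correct in substance and follows the same overall skeleton as the paper's proof: weak compactness of $\Lambda_a$ imported from Lemma \ref{thm technical}, the decomposition $\|W\|_\infty \le \|W^Q\|_\infty + \int_0^1|q^Q(t)|\,dt$ with $W^Q$ a $Q$-Brownian motion, and the characterization (the paper cites \cite[Theorem 7.12]{villani2003topics}) that weak compactness plus uniform integrability of $\|\omega\|_\infty$ upgrades to $\mathcal{W}_1$-compactness. Where you genuinely differ is in how the coercivity of $g$ is turned into uniform integrability of the drift term: the paper works with the linear bounds $|q| \le N + \tfrac1c g(t,q)$, splits $\E^Q[\|W\|_\infty 1_{\{\|W\|_\infty \ge 2r\}}]$ by hand (Cauchy--Schwarz plus a Markov-type bound on $Q(\int_0^1|q^Q(t)|\,dt \ge r)$), and sends $c\to\infty$ at the end; you instead construct a convex superlinear minorant $\phi$ of the coercive gauge, use Jensen to get $\sup_{Q\in\Lambda_a}\E^Q\bigl[\phi\bigl(\int_0^1|q^Q(s)|\,ds\bigr)\bigr] \le a+b$, and invoke de la Vall\'ee Poussin. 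Your handling of the convex envelope is careful and correct (the affine minorants $K(r-R_K)$ do settle superlinearity; and if $\mathrm{dom}(g(t,\cdot))$ is bounded then $h$, hence possibly $\phi$, takes the value $+\infty$, which only strengthens the bound). The two routes use the same ingredients; yours is a bit more modular in isolating a single superlinear-moment estimate, the paper's avoids the envelope construction and stays elementary.

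One step as literally written does not go through: with $\psi(r)=\min(r^2,\phi(r))$, bounding $\E^Q[\psi(\|\omega\|_\infty)]$ via $\psi(X+Y)\le \psi(2X)+\psi(2Y)$ (with $X=\|W^Q\|_\infty$, $Y=\int_0^1|q^Q(s)|\,ds$) leaves a term $\phi(2Y)$ (or $Y^2$), neither of which is controlled by your bounds --- a convex superlinear $\phi$ need not satisfy a doubling inequality $\phi(2y)\le C\phi(y)+C$, and $\phi$ may grow slower than quadratically. This is a minor, easily repaired slip rather than a missing idea: either take $\psi(r)=\min\{(r/2)^2,\phi(r/2)\}$, so that after the usual $\max$-splitting $\psi(X+Y)\le X^2+\phi(Y)+\phi(0)$; or dispense with a single $\psi$ altogether by observing that $\{\|W^Q\|_\infty\}_{Q\in\Lambda_a}$ (uniform $L^2$ bound) and $\{\int_0^1|q^Q(s)|\,ds\}_{Q\in\Lambda_a}$ (uniform superlinear moment) are each uniformly integrable, and a sum of two uniformly integrable families dominates $\|\omega\|_\infty$ and is itself uniformly integrable. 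With either adjustment your proof is complete.
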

\begin{proof}
Noting that $g$ is bounded from below and $\tilde{\alpha}^{g+c}=\tilde{\alpha}^g+c$ for constants $c \in \R$, we may assume without loss of generality that $g \ge 0$.
Fix $a \in \R$. We know from  Lemma \ref{thm technical} that $\Lambda_a$ is compact in the topology of weak convergence. It suffices to show (see \cite[Theorem 7.12]{villani2003topics}) that
\begin{align}
\lim_{r\rightarrow\infty}\sup_{Q \in \Lambda_a}\E^Q[\|W\|_\infty 1_{\{\|W\|_\infty  \ge 2r\}}] = 0. \label{pf:Wr0}
\end{align}
By Assumption (TI), for each $c > 0$ we may find $N > 0$ such that $g(t,q) \ge c|q|$ whenever $|q| \ge N$. Clearly $\Lambda_a \subset \P^*$. For $Q \in \Lambda_a$, by definition, $W^Q(t) := W(t) - \int_0^tq^Q(s)ds$ is a $Q$-Brownian motion.  Hence, for any $r > 1$,
\begin{align}
\E^Q[\|W\|_\infty 1_{\{\|W\|_\infty  \ge 2r\}}] &\le \E^Q[\|W\|_\infty 1_{\{\|W^Q\|_\infty  \ge r\}}] + \E^Q[\|W\|_\infty 1_{\{\int_0^1|q^Q(t)|dt  \ge r\}}]. \label{pf:Wr1}
\end{align}
For the first term, we make the estimate
\begin{align}
\E^Q[\|W\|_\infty 1_{\{\|W^Q\|_\infty  \ge r\}}] &\le \E^Q[\|W^Q\|_\infty 1_{\{\|W^Q\|_\infty  \ge r\}}] + \E^Q\left[\int_0^1|q^Q(t)|dt 1_{\{\|W^Q\|_\infty  \ge r\}}\right] \nonumber  \\
	&\le \E^\W[\|W\|_\infty1_{\{\|W\|_\infty \ge r\}}] + N\E^Q\left[1_{\{\|W^Q\|_\infty  \ge r\}}\right] \nonumber \\
		&\quad  + \frac{1}{c}\E^Q\left[\int_0^1 g(t,q^Q(t)) \,dt \,1_{\{\|W^Q\|_\infty  \ge r\}}\right]  \nonumber  \\
	&\le (1+N)\E^\W[\|W\|_\infty1_{\{\|W\|_\infty \ge r\}}] + \frac{1}{c}\E^Q\left[\int_0^1g(t,q^Q(t))dt\right]. \label{pf:Wr2}
\end{align}
We bound the second term of \eqref{pf:Wr1} similarly:
\begin{align}
\E^Q[\|W\|_\infty 1_{\{\int_0^1q^Q(t)dt  \ge r\}}] &\le \E^Q[\|W^Q\|_\infty 1_{\{\int_0^1|q^Q(t)|dt  \ge r\}}] + \E^Q\left[\int_0^1|q^Q(t)|dt 1_{\{\int_0^1|q^Q(t)|dt  \ge r\}}\right] \nonumber \\
	&\le \E^\W[\|W\|_\infty^2]^{1/2}Q\left(\int_0^1|q^Q(t)|dt  \ge r\right)^{1/2} \nonumber \\
	&\quad+ \frac{1}{c}\E^Q\left[\int_0^1g(t,q^Q(t))dt \right] + NQ\left(\int_0^1|q^Q(t)|dt  \ge r\right). \label{pf:Wr3}
\end{align}
Lastly, note that the definition of $\Lambda_a$ and Assumption (TI) ensure that 
\begin{align}
\lim_{r\rightarrow\infty}\sup_{Q \in \Lambda_a}Q\left(\int_0^1|q^Q(t)|dt  \ge r\right) = 0.
\end{align}
Combining this with \eqref{pf:Wr2}-\eqref{pf:Wr3} and returning to \eqref{pf:Wr1}, we deduce \eqref{pf:Wr0} since $c$ was arbitrary.
\end{proof}

\begin{corollary} \label{co:Sanov W1}
The conclusions of Theorems \ref{th:mainlimit} and \ref{thm: Sanov time} hold for any  $F \in C_b(\mathcal{P}_1(\C))$, where $\mathcal{P}_1(\C)$ is equipped with the metric $\mathcal{W}_1$, with the suprema over $Q \in \Q$ replaced by $Q \in \Q \cap \P_1(\C)$.
\end{corollary}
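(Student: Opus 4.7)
The proof proceeds by revisiting the arguments of Theorems~\ref{th:mainlimit} and \ref{thm: Sanov time}, replacing every appeal to the weak compactness of sub-level sets of $\tilde\alpha^g$ by the stronger $\mathcal{W}_1$-compactness provided by Lemma~\ref{le: Wr compact level sets}. The abstract result \cite[Theorem~1.1]{Lac-Sanov} is stated for a general Polish space, so the plan is to apply it with $\mathcal{P}_1(\C)$ (equipped with $\mathcal{W}_1$) as the ambient space rather than $\mathcal{P}(\C)$ with weak convergence.

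First I would upgrade Theorem~\ref{th:mainlimit}. As in Section~\ref{se:proof of mainlimit}, Proposition~\ref{pr:rhon} still gives
\[
\rho^{G_n}\left(F\left(\tfrac{1}{n}\textstyle\sum_{k=1}^n \delta_{W_{(n,k)}}\right)\right) = \tfrac{1}{n}\rho^g_n(nF \circ L_n).
\]
Crucially, if $\omega^{(k)} \to \omega$ in $\C^n$ (sup norm), then $L_n(\omega^{(k)}) \to L_n(\omega)$ in $\mathcal{W}_1$, because the supports are finite of bounded cardinality. Hence $F \circ L_n$ is continuous and bounded on $\C^n$ whenever $F \in C_b(\mathcal{P}_1(\C))$, so Lemma~\ref{lem coincidence n} still yields $\rho^g_n(nF\circ L_n) = \tilde\rho^g_n(nF\circ L_n)$. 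Now Lemma~\ref{le: Wr compact level sets} gives $\mathcal{W}_1$-compactness of sub-level sets of $\tilde\alpha^g$, so \cite[Theorem~1.1]{Lac-Sanov} applies in the $\mathcal{W}_1$ topology and yields
\[
\lim_{n\to\infty}\tfrac{1}{n}\tilde\rho^g_n(nF\circ L_n) = \sup_{Q\in\mathcal{P}^*}\bigl(F(Q)-\tilde\alpha^g(Q)\bigr).
\]

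Next I would refine the identity \eqref{pf:mainlimit1} to show the supremum may be restricted to $\mathcal{Q}\cap\mathcal{P}_1(\C)$. The argument of Lemma~\ref{lem concidence} approximates any $Q\in\mathcal{P}^*$ with $\tilde\alpha^g(Q)<\infty$ by the laws $Q_n$ of $X^n = W^Q + \int_0^\cdot q_n(s)\,ds$ with $q_n = q^Q\mathbf{1}_{\{|q^Q|\le n\}}$. The coercivity clause in (TI) gives $\E^Q\int_0^1|q^Q(s)|\,ds<\infty$, and from $\|X^n\|_\infty \le \|W^Q\|_\infty + \int_0^1 |q^Q(s)|\,ds$ (a $Q$-integrable majorant independent of $n$) together with pointwise convergence $X^n\to W$ in sup norm $Q$-a.s., dominated convergence upgrades $Q_n\to Q$ to $\mathcal{W}_1$-convergence. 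Combined with $\mathcal{W}_1$-continuity of $F$ and \eqref{eq approx by abs cont}, this gives \eqref{pf:mainlimit1} with the right-hand supremum over $\mathcal{Q}\cap\mathcal{P}_1(\C)$. The equality with the control representation (the second line of Theorem~\ref{th:mainlimit}) is unchanged, since $Q^q\in\mathcal{P}_1(\C)$ for $q\in\mathcal{L}_b$.

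For Theorem~\ref{thm: Sanov time}, the proof in Section~\ref{sec BSDE scaled limits} goes through verbatim once the upgraded Theorem~\ref{th:mainlimit} is available, provided one upgrades the law of large numbers used on the identity \eqref{pf:BSDE-Sanov-identity}. The empirical measure $\frac{1}{\lfloor nt\rfloor}\sum_{k=1}^{\lfloor nt\rfloor}\delta_{\omega_{(n,k)}}$ consists of i.i.d.\ Brownian motions with $\E[\|W\|_\infty]<\infty$, so Varadarajan's theorem yields convergence to $\W$ in $\mathcal{W}_1$ almost surely. The $1$-Lipschitz estimate from \cite[Lemma~4.3]{FS3dr} combined with uniform continuity of $F$ on $\mathcal{W}_1$-bounded sets (which holds after a standard truncation argument replacing $F$ by $F_R := F \circ \pi_R$, where $\pi_R$ projects onto the ball of radius $R$ for $\mathcal{W}_1$) lets us pass to the limit inside $\rho^{G_n^{(t_n)}}$ exactly as before. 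The monotone approximation argument used to remove uniform continuity at the end of the proof of Theorem~\ref{thm: Sanov time} applies unchanged.

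The main obstacle is less one of novel ideas than of bookkeeping: one has to check that every approximation step carried out in the weak topology (notably within Lemmas~\ref{lem concidence} and~\ref{lem coincidence n}, and within the uniform-continuity reduction in Theorem~\ref{thm: Sanov time}) respects the $\mathcal{W}_1$-integrability afforded by the $\tilde\alpha^g$-bound. In each case the coercivity assumption \eqref{TI:domain}--\eqref{TI: time integrability} furnishes an integrable majorant on $\|X\|_\infty$, so dominated convergence delivers the required $\mathcal{W}_1$ convergence.
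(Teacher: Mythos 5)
Your overall strategy coincides with the paper's: the paper's proof of Corollary \ref{co:Sanov W1} is exactly ``rerun the proofs of Theorems \ref{th:mainlimit} and \ref{thm: Sanov time}, feed the $\mathcal{W}_1$-compactness of the sub-level sets of $\tilde{\alpha}^g$ (Lemma \ref{le: Wr compact level sets}) into the abstract limit theorem, and check that the supremum can be restricted to $\Q\cap\P_1(\C)$.'' Your treatment of the last point is a legitimate variant: you get $\mathcal{W}_1(Q_n,Q)\to 0$ directly from the coupling $(X^n,W)$ under $Q$ and dominated convergence (coercivity in (TI) gives $\E^Q\int_0^1|q^Q|\,dt<\infty$), whereas the paper argues more slickly that $Q_n\to Q$ weakly together with the $\mathcal{W}_1$-precompactness of $\{Q_n\}$ (again Lemma \ref{le: Wr compact level sets}) forces $\mathcal{W}_1$-convergence. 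Both work; yours is more hands-on, the paper's reuses the compactness lemma and avoids any moment estimate.

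There is, however, one genuine gap in justification. You propose to apply \cite[Theorem~1.1]{Lac-Sanov} ``with $\P_1(\C)$ (equipped with $\mathcal{W}_1$) as the ambient space.'' That substitution is not meaningful: in the abstract theorem the ambient Polish space is the space where the samples live and on which the iterated functionals act --- here $\C$ and $\C^n$, with the chopped paths $W_{(n,k)}$ producing empirical measures in $\P(\C)$ --- and Theorem~1.1 requires $F$ to be continuous for the \emph{weak} topology on $\P(\C)$. An $F$ that is merely $\mathcal{W}_1$-continuous is not covered, and you cannot recover it by renaming the ambient space, since the pre-limit object $\rho^g_n(nF\circ L_n)$ is tied to $\C^n$, not to $\P_1(\C)^n$. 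What is needed (and what the paper invokes) is the strengthened version \cite[Theorem~3.1]{Lac-Sanov}, which allows a finer topology on the space of measures precisely under the hypothesis you correctly isolate, namely $\mathcal{W}_1$-compactness of the sub-level sets of $\tilde{\alpha}^g$; so the fix is only to cite the right abstract result, but as written your step would fail. A smaller point: your truncation device ``$F_R:=F\circ\pi_R$ with $\pi_R$ the projection onto a $\mathcal{W}_1$-ball'' is not a well-defined or standard construction and is not needed; to reduce to uniformly continuous $F$ one can use monotone approximation by bounded $\mathcal{W}_1$-Lipschitz functions (inf-convolutions), exactly as in the paper's proofs of Theorems \ref{thm HL} and \ref{thm: Sanov time}, which carries over verbatim to the metric $\mathcal{W}_1$.
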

\begin{proof}
The proofs are exactly the same as those of Theorems \ref{th:mainlimit} and Theorem \ref{thm: Sanov time}, with only minor points to check. In light of Lemma \ref{le: Wr compact level sets}, we may apply the more general \cite[Theorem 3.1]{Lac-Sanov} in place of \cite[Theorem 1.1]{Lac-Sanov} to conclude that
\begin{align*}
\lim_{n\rightarrow\infty}\frac{1}{n}\tilde{\rho}^g_n(nF \circ L_n) = \sup_{Q \in \P_1(\C)}(F(Q) - \tilde{\alpha}^g(Q)).
\end{align*}
The only point worth checking is that
\[
\sup_{Q \in \P_1(\C)}(F(Q) - \tilde{\alpha}^g(Q)) = \sup_{Q \in \Q \cap \P_1(\C)}(F(Q) - \alpha^g(Q))
\]
holds when $F$ is merely $\mathcal{W}_1$-continuous, but the same argument as in the proof of Theorem \ref{th:mainlimit} works: If $\tilde{\alpha}^g(Q) < \infty$, then there exists $Q_n \in \Q$ such that $Q_n \to Q$ weakly and $\limsup_n\alpha^g(Q_n) \le \tilde{\alpha}^g(Q)$. Deduce from Lemma \ref{le: Wr compact level sets} that $\{Q_n\}$ is $\mathcal{W}_1$-precompact and thus $\mathcal{W}_1(Q_n,Q) \to 0$. Hence, $F(Q_n)\to F(Q)$, and the above identity follows.
\end{proof}

{
As a consequence of Corollary \ref{co:Sanov W1}, we provide the following Cr\'amer-type limit theorem:

\begin{corollary} \label{co:Schilder-finaltime weird}
For every $F \in C_b(\C)$, we have 
\begin{align} \label{eq weird Schilder}
\lim_{n\rightarrow\infty}\,\rho^{G_n}\left(F \left(\frac 1n \sum_{k=1}^n{W_{(n,k)}}\right)\right) 
= \sup_{\omega \in \C_0} \left (F(\omega) - \int_0^1 g(t,\dot{\omega}(t))dt\right).
\end{align}
\end{corollary}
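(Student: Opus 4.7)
The plan is to recognize $\frac{1}{n}\sum_{k=1}^n W_{(n,k)}$ as the barycenter of the empirical measure $L_n:=\frac{1}{n}\sum_{k=1}^n\delta_{W_{(n,k)}}$ and then apply the Wasserstein-strengthened Theorem~\ref{th:mainlimit}. Define $\Phi:\P_1(\C)\to\C$ by $\Phi(Q)(t):=\int_\C\omega(t)\,Q(d\omega)$ and set $F':=F\circ\Phi$. A standard coupling estimate gives $\|\Phi(Q)-\Phi(Q')\|_\infty\leq\mathcal{W}_1(Q,Q')$, so $\Phi$ is $1$-Lipschitz and $F'\in C_b(\P_1(\C))$. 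Since $F'(L_n)=F(\frac{1}{n}\sum_k W_{(n,k)})$, Corollary~\ref{co:Sanov W1} yields
\[
\lim_{n\to\infty}\rho^{G_n}\!\left(F\!\left(\tfrac{1}{n}\sum_k W_{(n,k)}\right)\right)=\sup_{Q\in\Q\cap\P_1(\C)}\bigl(F(\Phi(Q))-\alpha^g(Q)\bigr),
\]
and it remains to identify this right-hand side with the right-hand side of \eqref{eq weird Schilder}. Note that the Wasserstein strengthening is essential here since the barycenter map is not weakly continuous.

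For the inequality ``$\leq$'', fix $Q\in\Q\cap\P_1(\C)$ with $\alpha^g(Q)<\infty$. Under $Q$, the canonical process decomposes as $W(t)=W^Q(t)+\int_0^t q^Q(s)ds$, with $W^Q$ a $Q$-Brownian motion. The coercivity of $g$ from (TI) together with $\alpha^g(Q)<\infty$ yields $\E^Q\int_0^1|q^Q(t)|\,dt<\infty$, and Fubini gives $\Phi(Q)(t)=\int_0^t\bar q(s)\,ds$ with $\bar q(s):=\E^Q[q^Q(s)]\in L^1([0,1])$. Jensen's inequality applied to the convex function $g(t,\cdot)$ then gives $\int_0^1 g(t,\bar q(t))\,dt\leq\E^Q\int_0^1 g(t,q^Q(t))\,dt=\alpha^g(Q)$, and therefore
\[
F(\Phi(Q))-\alpha^g(Q)\leq F(\Phi(Q))-\int_0^1 g(t,\dot\Phi(Q)(t))\,dt\leq\sup_{\omega\in\C_0}\left(F(\omega)-\int_0^1 g(t,\dot\omega(t))\,dt\right).
\]

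For the inequality ``$\geq$'', take an absolutely continuous $\omega\in\C_0$ with $\int_0^1 g(t,\dot\omega(t))\,dt<\infty$ and truncate its derivative exactly as in the proof of Theorem~\ref{co:Schilder-finaltime}:
\[
\dot\omega_k(t):=\dot\omega(t)\mathbf{1}_{\{|\dot\omega(t)|\leq k\}}+\tfrac{k}{|\dot\omega(t)|}\dot\omega(t)\mathbf{1}_{\{|\dot\omega(t)|>k\}},\qquad\omega_k:=\int_0^\cdot\dot\omega_k(s)\,ds.
\]
Setting $Q_k:=Q^{\dot\omega_k}$ with the bounded deterministic drift $\dot\omega_k\in\L_b$, we have $Q_k\in\Q\cap\P_1(\C)$, $\Phi(Q_k)=\omega_k\to\omega$ uniformly, and $\alpha^g(Q_k)=\int_0^1 g(t,\dot\omega_k(t))\,dt\to\int_0^1 g(t,\dot\omega(t))\,dt$ by exactly the dominated convergence step already carried out in the proof of Theorem~\ref{co:Schilder-finaltime} (using the bound $g(t,\dot\omega_k)\leq g(t,0)+g(t,\dot\omega)+2b$). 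Continuity of $F$ then gives $F(\omega_k)-\alpha^g(Q_k)\to F(\omega)-\int_0^1 g(t,\dot\omega(t))\,dt$, establishing the reverse inequality. The main obstacle is conceptual---namely, the observation that \eqref{eq weird Schilder} is the barycenter pullback of the Sanov-type limit, which forces one to work with the Wasserstein topology rather than weak convergence; once that is noticed, the remaining steps are a routine Jensen inequality and the truncation argument already used for Theorem~\ref{co:Schilder-finaltime}.
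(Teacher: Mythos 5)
Your proof is correct and follows essentially the same route as the paper: apply the $\mathcal{W}_1$-strengthened Sanov result (Corollary \ref{co:Sanov W1}) to the barycenter functional $Q\mapsto F\left(\int_\C \omega\,Q(d\omega)\right)$, then identify the resulting supremum via Jensen's inequality for the upper bound and an explicit deterministic-drift construction for the lower bound. The only cosmetic difference is that the paper passes through $\P^*$ and $\tilde{\alpha}^g$ and plugs in the (possibly unbounded) drift $\dot{\omega}$ directly, whereas you remain in $\Q\cap\P_1(\C)$ by truncating the drift and reusing the dominated-convergence step from Theorem \ref{co:Schilder-finaltime}; both are valid.
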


\begin{proof}
Apply Corollary \ref{co:Sanov W1} to the $\mathcal{W}_1$-continuous function $\P_1(\C) \ni Q \mapsto F\left(\int_\C \omega \,Q(d\omega)\right)$, where the integral is understood in the Bochner sense, to get
\begin{align*}
\lim_{n\rightarrow\infty}\,\rho^{G_n}\left(F \left(\frac 1n \sum_{k=1}^n{W_{(n,k)}}\right)\right) 
= \sup_{Q \in \P_1(\C)}\left(F\left(\int_{\C} \omega\,Q(d\omega)\right) - {\alpha}^g(Q)\right).
\end{align*}
By the arguments in the proof of Corollary \ref{co:Sanov W1}, the above expression is equal to
\[
\sup_{Q \in \mathcal P^*}\left(F\left(\int_{\C} \bar{\omega}\,Q(d\bar{\omega})\right) - \tilde{\alpha}^g(Q)\right) = \sup_{\omega \in \C_0}\left(F(\omega) - I(\omega)\right),
\]
where we define
\begin{align*}
I(\omega) &:= \inf\left\{\tilde\alpha^g(Q): Q \in \P^* \cap \P_1(\C), \ \int_{\C} \bar{\omega}\,Q(d\bar{\omega}) = \omega\right\}.
\end{align*}
Indeed, we may restrict the supremum to $\P^* \cap \P_1(\C)$ as opposed to $\P^*$ because $\tilde{\alpha}^g(Q)=\infty$ for $Q \notin \P_1(\C)$ by Lemma \ref{le: Wr compact level sets}.
We need only show that
\[
I(\omega) = 
 \begin{cases}
 \int_0^1 g(t,\dot{\omega}(t))dt & \text{if } \omega \text{ is absolutely continuous}\\
 \infty &\text{otherwise}.
 \end{cases}
\]
Noting that $\E^Q[W(t)] = \int_0^t\E^Q[q^Q(s)]ds$ for $Q \in \P^* \cap \P_1(\C)$, we have 
\begin{align*}\textstyle
 I(\omega) &=\textstyle \inf\left\{\E^Q\left[\int_0^1g(t,q^Q(t))dt\right] : Q \in \P^* \cap \P_1(\C), \ \int_0^t\E^Q[q^Q(s)]ds = \omega(t), \ \forall t \in [0,1]\right\}.
\end{align*}
Now, fix $\omega \in \C$.
Jensen's inequality yields 
\begin{align}
 \E^Q\left[\int_0^1g(t,q^Q(t))dt\right] \geq \int_0^1g(t,\E^Q[q^Q(t)])dt = \int_0^1g(t,\dot{\omega}(t))dt, \label{pf:Schilder1}
\end{align}
for any $Q \in \P^*$ for which $\int_0^t\E^Q[q^Q(s)]ds = \omega(t)$ for all $t \in [0,1]$. If $\omega$ is absolutely continuous, then we can define $Q= \W\circ(W+\int_0^\cdot \dot{\omega}(t)dt)^{-1}$ so that $Q \in \P^* \cap \P_1(\C)$ with $q^Q(t) = \dot{\omega}(t)$ for all $t \in [0,1]$. We conclude that, for $\omega$ absolutely continuous,
\[
I(\omega) = \int_0^1g(t,\dot{\omega}(t))dt.
\]
On the other hand, if $\omega$ is not absolutely continuous, then there cannot exist $Q \in \P^* \cap \P_1(\C)$ with $\int_0^t\E^Q[q^Q(s)]ds = \omega(t)$ for all $t \in [0,1]$.
\end{proof}
 
\begin{remark}
Comparing \eqref{eq Schilder familiar} and \eqref{eq weird Schilder}, we find that
\begin{align*}
\lim_{n\to\infty}\rho^{g_n}\left(F\left(\frac{W}{\sqrt{n}}\right)\right) = \lim_{n\rightarrow\infty}\,\rho^{G_n}\left(F \left(\frac 1n \sum_{k=1}^n{W_{(n,k)}}\right)\right) .
\end{align*}
If $F(\omega)=f(\omega(1))$ depends only on the final value, then these quantities are even equal for each $n$, without taking a limit (by telescoping sum).
This may at first seem unsurprising (at least for time-independent $g$) because $W/\sqrt{n}$ and $\frac 1n \sum_{k=1}^n{W_{(n,k)}}$ have the same law for each $n$.
In general, however, we do not expect pre-limit equality except when $\rho^g$ is law-invariant.
By \cite{kupper02}, the functional $\rho^g$ is law-invariant essentially only when $g(t,q)=c|q|^2$ for $c \in (0,\infty]$, with the convention $0\cdot\infty:=0$.
\end{remark}

}

{
\subsection{Extensions to non-trivial initial positions}
\label{sec extension of results}
Preparing for our study of Schr\"odinger problems, we now extend some of our results to allow the Brownian motion to have a (constant) volatility different than $1$ as well as a random, non-zero initial position.

We fix throughout this section the function $g$ satisfying assumption (TI), and we will omit it from our soon-to-be cluttered superscripts.
Recall that $\W$ denotes Wiener measure on $\C$ and $W$ denotes the canonical process (identity map) on $\C$. 
For $Q\in \mathcal{P}(\C)$ we take a regular kernel $(Q^{\omega(0)=x})_{x \in \R^d}$ so by disintegration
\[
Q(\cdot) = \int_{\R^d} Q^{\omega(0)=x}(\cdot)Q^0(dx),
\]
where $Q^{\omega(0)=x} \in \P(\C)$ is supported on the set $\C_x := \{\omega \in \C : \omega(0)=x\}$ and $Q^0$ is the time-zero marginal of $Q$.

We are given $\mu \in \P(\R^d)$. Recalling that $\W_\epsilon = \W \circ (\sqrt{\epsilon}W)^{-1}$, we define
\[
{\W^{\omega(0)\sim\mu}_\epsilon(\cdot)}: = \int_{x\in \R^d}\W^{\omega(0)=x}_\epsilon(\cdot)\mu(dx),
\]
namely the law of a Brownian motion with starting distribution $\mu$ and instantaneous variance (i.e.\ volatility) equal to $\epsilon$. 

For $Q \in \P(\C)$ with $Q \ll \W^{\omega(0)\sim\mu}_\epsilon$ and $Q^0=\mu$, we define $q^Q_\epsilon$ as the unique progressively measurable process satisfying
\begin{align*}
\frac{dQ}{d\W^{\omega(0)\sim\mu}_\epsilon}= \exp\left ( \frac{1}{\epsilon}\int_0^1 q^Q_\epsilon(t)dW(t) - \frac{1}{2\epsilon}\int_0^1 |q^Q_\epsilon(t)|^2dt  \right ).
\end{align*}
Then, for $Q \in \P(\C)$ we define 
\begin{align*}
\alpha^\mu_\epsilon(Q) := \begin{cases}
\E^Q\left[\int_0^1g(t, q^Q_\epsilon(t))dt\right] &\text{if } Q \ll \W^{\omega(0)\sim\mu}_\epsilon, \ Q^0 = \mu \\
+\infty &\text{otherwise}.
\end{cases}
\end{align*}
It is straightforward to check that
\begin{align}
\alpha^\mu_\epsilon(Q) = \begin{cases}
\int_{\R^d}\alpha^{\delta_x}_\epsilon(Q^{\omega(0)=x})\mu(dx) &\text{if } Q \ll \W^{\omega(0)\sim\mu}_\epsilon, \ Q^0 = \mu \\
+\infty &\text{otherwise}.
\end{cases} \label{def:alphamu-pointwise}
\end{align}

On the dual side, for $F \in B_b(\C)$, we define
\begin{align*}
\rho^\mu_\epsilon(F) &:= \sup_{Q \in \P(\C)}\left(\E^Q[F] - \alpha^\mu_\epsilon(Q)\right) \\
	&= \sup_{Q \ll \W^{\omega(0)\sim\mu}_\epsilon, \ Q^0=\mu}\E^Q\left[F(W) - \int_0^1g(t, q^{Q}_\epsilon(t))dt\right].
\end{align*}
Let us recall the notation for $\mathcal{P}^*_\epsilon(\C)$ in  Section \ref{sec:schilder.to.schroe}, as well as $Q\mapsto q^Q$ defined there. Define
$$\tilde{\alpha}^\mu_\epsilon(Q):= \left \{ 
\begin{array}{ll}
\E^Q\left [\int_0^1 g(t,q^Q(t))dt \right ] & \text{ if $Q\in \mathcal{P}^*_\epsilon(\C)$ and $Q^0=\mu$}\\
+\infty & \text{ otherwise},
\end{array}
\right .
$$
 and introduce analogously
$$\tilde{\rho}^\mu_\epsilon(F) := \sup_{Q \in \P(\C)}\left(\E^Q[F] - \tilde{\alpha}^\mu_\epsilon(Q)\right).$$

We ask the reader to bear in mind that, whenever we use $g$ or any other function as superscript for $\alpha$ or $\rho$ (resp. $\tilde{\alpha}$ or $\tilde{\rho}$), we mean it in the sense of Section \ref{sec setting} (resp. Section \ref{se:compactness-alpha}), with the starting distribution being fixed to $\delta_0$. On the other hand, whenever we use $\mu$ or any other measure as supercript for $\alpha,\rho,\tilde{\alpha},\tilde{\rho}$, we mean it in the sense presented in the current section (the function $g$ being fixed). 

Let us first present the analogue to Lemmas \ref{thm technical} and \ref{lem concidence} (which took care of $\mu=\delta_0$ and $\epsilon=1$) in the present setup:

\begin{lemma}\label{thm technical gen}
The functional $\tilde{\alpha}_\epsilon^\mu$ is convex and lower semicontinuous (with respect to weak convergence), and its sub-level sets are weakly compact in this topology. Furthermore, for $Q\in\mathcal{P}^*_\epsilon(\C)$ with $Q^0=\mu$ we have 
$$\tilde{\alpha}_\epsilon^\mu(Q)=\sup_{F\in B_b(\C)}\left(\E^Q[F]-\tilde\rho_\epsilon^\mu(F)\right)={\sup_{F\in C_b(\C)}\left(\E^Q[F]-\tilde\rho_\epsilon^\mu(F)\right)},$$
and, on the other hand, for $F:\C\to\R$ bounded lower-semicontinuous we have $$ \rho_\epsilon^\mu(F)=\tilde\rho_\epsilon^\mu(F). $$
\end{lemma}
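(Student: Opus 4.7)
My plan is to reduce Lemma \ref{thm technical gen} to its $(\mu,\epsilon)=(\delta_0,1)$ case, already covered by Lemmas \ref{thm technical} and \ref{lem concidence}, combining a spatial rescaling to normalize the volatility with a disintegration over the initial distribution.

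First I would handle the volatility. Set $g_\epsilon(t,q):=g(t,\sqrt{\epsilon}\,q)$, which inherits (TI) from $g$. For $x\in\R^d$ fixed, the map $\Phi_x:\C\to\C$ with $\Phi_x(\omega)(t)=(\omega(t)-x)/\sqrt{\epsilon}$ is a homeomorphism sending $\C_x$ onto $\C_0$. Uniqueness of the drift gives that $Q\in\mathcal{P}^*_\epsilon(\C)$ with $Q^0=\delta_x$ iff $Q\circ\Phi_x^{-1}\in\mathcal{P}^*_1(\C)$ with time-$0$ marginal $\delta_0$, with drifts related by $q^{Q\circ\Phi_x^{-1}}(t,\omega)=q^Q(t,x+\sqrt{\epsilon}\omega)/\sqrt{\epsilon}$. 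A direct change of variables yields
\begin{align*}
\tilde\alpha^{\delta_x}_\epsilon(Q)=\tilde\alpha^{g_\epsilon}(Q\circ\Phi_x^{-1}),
\end{align*}
so that Lemmas \ref{thm technical} and \ref{lem concidence} applied to $g_\epsilon$ transfer, through the homeomorphism $\Phi_x$, the full statement of Lemma \ref{thm technical gen} for the case $\mu=\delta_x$.

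Next I would lift these fixed-starting-point results to general $\mu$ via the disintegration identity
\begin{align*}
\tilde\alpha^\mu_\epsilon(Q)=\int_{\R^d}\tilde\alpha^{\delta_x}_\epsilon(Q^{\omega(0)=x})\,\mu(dx),\qquad Q^0=\mu,
\end{align*}
which follows from the fact that $q^Q$ restricted to $\C_x$ serves as the drift for $Q^{\omega(0)=x}$. Convexity of $\tilde\alpha^\mu_\epsilon$ then follows from a standard Jensen / Bayes-rule argument for the drift of a mixture. For lsc and weak compactness of sublevel sets, I would mirror the proof of Lemma \ref{thm technical} directly in the $(\mu,\epsilon)$-setting: the coercivity in (TI) bounds $\E^Q\int_0^1|q^Q(t)|\,dt$ on $\{\tilde\alpha^\mu_\epsilon\le a\}$, and together with preservation of the time-$0$ marginal $\mu$ under weak limits, this yields tightness and then lsc of $Q\mapsto\tilde\alpha^\mu_\epsilon(Q)$ by the standard relaxed-control argument. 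The variational formula follows by Fenchel--Moreau duality once convexity and lsc are in hand, and the identity $\rho^\mu_\epsilon(F)=\tilde\rho^\mu_\epsilon(F)$ for bounded lsc $F$ is obtained by the truncation argument from Lemma \ref{lem concidence}: for $Q\in\mathcal{P}^*_\epsilon$ with $Q^0=\mu$, take $q_n:=q^Q\mathbf{1}_{\{|q^Q|\le n\}}$ and invoke Girsanov to produce $Q_n\ll\W^{\omega(0)\sim\mu}_\epsilon$ with $Q_n\to Q$ weakly and $\limsup_n\alpha^\mu_\epsilon(Q_n)\le\tilde\alpha^\mu_\epsilon(Q)$.

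The main technical subtlety will be the disintegration of the $\mathcal{P}^*_\epsilon$-property: strictly speaking, $Q\in\mathcal{P}^*_\epsilon$ asks that $(W-W(0)-\int q^Q\,ds)/\sqrt{\epsilon}$ be a $Q$-Brownian motion, and independence of increments need not survive the conditioning $Q\mapsto Q^{\omega(0)=x}$. I would overcome this via L\'evy's characterization: the $\mathcal{P}^*_\epsilon$ property is equivalent to $M:=W-W(0)-\int q^Q\,ds$ being a continuous $Q$-local martingale with $[M,M]_t=\epsilon t$, and both the martingale and quadratic-variation properties disintegrate cleanly since $\{\omega(0)=x\}\in\F_0$, giving $Q^{\omega(0)=x}\in\mathcal{P}^*_\epsilon$ for $\mu$-a.e.\ $x$.
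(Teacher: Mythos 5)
Your proposal is correct and follows essentially the route the paper intends: the paper omits this proof precisely because it "boils down to the same arguments" as Lemmas \ref{thm technical} and \ref{lem concidence}, and your plan is to rerun those arguments (mixing/optional projection for convexity, the tightness--lsc argument via Lemma \ref{le:H1-tightness}, Fenchel--Moreau for the dual formula, and the drift-truncation/Girsanov argument for $\rho^\mu_\epsilon=\tilde\rho^\mu_\epsilon$) in the $(\mu,\epsilon)$ setting. The added reduction via the rescaling $\Phi_x$ and the disintegration over the initial law, together with the L\'evy-characterization justification that conditioning on $\F_0$ preserves membership in $\mathcal{P}^*_\epsilon$, is sound bookkeeping consistent with the paper's identity \eqref{def:alphamu-pointwise}, and does not change the substance of the argument.
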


We omit the proof, since it boils down to the same arguments as for Lemmas \ref{thm technical} and \ref{lem concidence}. The key point of this section is the following proposition, for which we recall the notation $\C_x = \{\omega \in \C : \omega(0)=x\}$:

\begin{proposition} \label{th:Schilder-initial}
Let $\mu \in \P(\R^d)$ and $\epsilon > 0$. For $F: \C\to \R$ measurable and bounded we have
\begin{align}\label{eqdisintegrationrho}
\rho^\mu_\epsilon(F) = \int_{\R^d} \rho^{\delta_x}_\epsilon(F)\,\mu(dx).
\end{align}
For $F \in C_b(\C)$ we further have
\begin{align}\label{eqlimitgeneralvar}
\lim_{\epsilon \downarrow 0}\,\rho^\mu_\epsilon\left(F\right) = \int_{\R^d}\,\, \sup_{\omega \in \C_x}\left(F(\omega) - \int_0^1g(t,\dot{\omega}(t))dt\right)\mu(dx).
\end{align}
\end{proposition}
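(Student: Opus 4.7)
The plan is to establish \eqref{eqdisintegrationrho} first, and then to derive \eqref{eqlimitgeneralvar} from a scaling identification with the Schilder-type result of Theorem \ref{co:Schilder-finaltime}, followed by a dominated-convergence argument to pass the limit through the $\mu$-integral.

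For \eqref{eqdisintegrationrho}, I would combine the pointwise disintegration \eqref{def:alphamu-pointwise} of $\alpha^\mu_\epsilon$ with the trivial identity $\E^Q[F] = \int \E^{Q^{\omega(0)=x}}[F]\,\mu(dx)$, in order to rewrite
\begin{align*}
\rho^\mu_\epsilon(F) = \sup_{Q^0=\mu,\ Q\ll \W^{\omega(0)\sim\mu}_\epsilon}\int_{\R^d}\bigl(\E^{Q^{\omega(0)=x}}[F] - \alpha^{\delta_x}_\epsilon(Q^{\omega(0)=x})\bigr)\,\mu(dx).
\end{align*}
The inequality ``$\le$'' is immediate by bounding the integrand pointwise by $\rho^{\delta_x}_\epsilon(F)$. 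For ``$\ge$'', given $\eta>0$ I would invoke a measurable-selection theorem (e.g., \cite[Proposition 7.50]{BertsekasShreve}) to build a Borel kernel $x\mapsto Q_x$ with $Q_x^0=\delta_x$, $Q_x\ll \W^{\omega(0)=x}_\epsilon$, and value within $\eta$ of $\rho^{\delta_x}_\epsilon(F)$; then the mixture $Q := \int Q_x\,\mu(dx)$ is admissible and realizes the desired bound up to $\eta$.

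For \eqref{eqlimitgeneralvar}, I would first identify $\rho^{\delta_x}_\epsilon(F)$ with a standard Schilder-type functional via the bijection $T^\epsilon_x(\omega) := x + \sqrt{\epsilon}\,\omega$ mapping $\C_0$ onto $\C_x$. A direct Girsanov computation shows that if $Q\in\P(\C_x)$ satisfies $Q^0=\delta_x$ and $Q\ll\W^{\omega(0)=x}_\epsilon$, then $\bar Q:=Q\circ (T^\epsilon_x)^{-1}\ll \W$ with $q^{\bar Q}(t,\bar\omega) = \epsilon^{-1/2}\,q^Q_\epsilon(t,T^\epsilon_x(\bar\omega))$. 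Consequently, setting $n:=1/\epsilon$ and $g_n(t,q):=g(t,q/\sqrt n)$ as in Theorem \ref{co:Schilder-finaltime}, one obtains the identity
\begin{align*}
\rho^{\delta_x}_\epsilon(F) = \rho^{g_n}\bigl(F(x+W/\sqrt n)\bigr).
\end{align*}
Applying Theorem \ref{co:Schilder-finaltime} to the $C_b(\C)$-function $\omega\mapsto F(x+\omega)$—whose proof adapts verbatim to the continuous parameter $\epsilon\downarrow 0$, since it only uses the convergence $\sqrt{\epsilon}\,W\to 0$ in probability and tightness of approximately optimal drifts via Lemma \ref{le:H1-tightness}—yields the pointwise limit
\begin{align*}
\lim_{\epsilon\downarrow 0}\rho^{\delta_x}_\epsilon(F) = \sup_{\omega\in\C_x}\Bigl(F(\omega) - \int_0^1 g(t,\dot\omega(t))\,dt\Bigr).
\end{align*}

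Finally I would combine this pointwise convergence with \eqref{eqdisintegrationrho} and pass $\epsilon\downarrow 0$ under the integral via dominated convergence. The required uniform bound is straightforward: the choice $Q=\W^{\omega(0)=x}_\epsilon$ gives $\rho^{\delta_x}_\epsilon(F)\ge -\|F\|_\infty - \int_0^1 g(t,0)\,dt$, while the lower bound $g\ge -b$ yields $\rho^{\delta_x}_\epsilon(F) \le \|F\|_\infty + b$, both uniformly in $x$ and $\epsilon$. The step I expect to require the most care is the measurable selection in the first part, where I must produce a genuinely Borel (or at least universally measurable) kernel $x\mapsto Q_x$ so that $\int Q_x\,\mu(dx)$ is a well-defined admissible probability measure; the scaling identification in step two and the dominated convergence in step three should both be routine once this is in hand.
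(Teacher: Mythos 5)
Your proposal is correct and takes essentially the same route as the paper: prove \eqref{eqdisintegrationrho} by disintegration plus an $\eta$-optimal measurable selection glued into a mixture $Q=\int Q_x\,\mu(dx)$, identify $\rho^{\delta_x}_\epsilon(F)$ with $\rho^{g_{1/\epsilon}}(F(x+\cdot))$ by scaling so that Theorem \ref{co:Schilder-finaltime} gives the pointwise limit, and finish by dominated convergence. The only cosmetic difference is that the paper carries out the selection at the level of drifts $q^x\in\L_b$ through the representation \eqref{eq BBD general var} (so that measurability of $(x,q)\mapsto$ value and of $x\mapsto\rho^{\delta_x}_\epsilon(F)$ is immediate on a Borel subset of a separable metric space), which cleanly resolves the very point you flag as delicate when selecting measures $Q_x$ directly.
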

\begin{proof}
Using \eqref{def:alphamu-pointwise}, we have
\begin{align}
\rho^\mu_\epsilon(F) &= \sup_{Q \in \P(\C)}\left(\E^Q[F] - \alpha^\mu_\epsilon(Q)\right) \nonumber \\
	&= \sup_{Q \ll \W_\epsilon^{\omega(0)\sim\mu}, \ Q^0=\mu}\left(\E^Q[F] - \int_{\R^d}\alpha^{\delta_x}_\epsilon(Q^{\omega(0)=x})\mu(dx)\right) \nonumber \\
	&= \sup_{Q \ll \W_\epsilon^{\omega(0)\sim\mu}, \ Q^0=\mu}\int_{\R^d}\left(\E^{Q^{\omega(0)=x}}[F] - \alpha^{\delta_x}_\epsilon(Q^{\omega(0)=x})\right)\mu(dx) \label{pf:Schilder-initial1} \\
	&\le \int_{\R^d}\rho^{\delta_x}_\epsilon(F)\,\mu(dx). \nonumber 
\end{align}
To prove the reverse relies on a careful application of a standard measurable selection argument.
A straightforward transformation of \eqref{eq BBD} yields
\begin{align}\label{eq BBD general var}
\rho^{\delta_x}_\epsilon(F) = \sup_{q\in \L_b}\E^{\W^{\omega(0)=0}}\left [F\left ( x+\sqrt{\epsilon}\, W+\int_0^\cdot q(t)dt  \right ) - \int_0^1 g(t,q(t))dt  \right ].
\end{align}
Note that $\L_b$ is a Borel subset of the (separable metric) space $\L^2$ of square-integrable progressively measurable processes, and that the map 
\[
\R^d\times \L_2 \ni(x,q)\mapsto \E^{\W^{\omega(0)=0}}\left [F\left ( x+\sqrt{\epsilon}\, W+\int_0^\cdot q(t)dt  \right ) - \int_0^1 g(t,q(t))dt  \right ]
\]
is measurable. We may apply standard analytic set theory \cite[Proposition 7.47]{BertsekasShreve} to conclude that $x \mapsto \rho^{\delta_x}_\epsilon(F)$ is upper semianalytic and, in particular, universally measurable.
The integral in the right-hand side of \eqref{eqdisintegrationrho} is thus well defined, since further $\rho^{\delta_x}_\epsilon(F)$ is bounded by the bounds of $F$ and $g$. 
By \cite[Proposition 7.50]{BertsekasShreve}, there exists a universally measurable $\eta$-approximate optimizer $q^x \in \L_b$ in \eqref{eq BBD general var}, for any $\eta > 0$. Letting $Q_x= \W \circ \left(x+\sqrt{\epsilon}\, W+\int_0^\cdot q^x(t)dt  \right)^{-1}$, we check that the probability measure $Q = \int_{x\in \R^d}Q_x\,\mu(dx)$
satisfies $Q \ll \W_\epsilon^{\omega(0)\sim\mu}$, $Q^0=\mu$, and $Q^{\omega(0)=x} = Q_x$. Moreover, by design,
\begin{align*}
\rho^{\delta_x}_\epsilon(F) - \eta &\le \E^{Q_x}[F] - \alpha^{\delta_x}_\epsilon(Q_x)  = \E^{Q^{\omega(0)=x}}[F] - \alpha^{\delta_x}_\epsilon(Q^{\omega(0)=x}).
\end{align*}
Hence, using the expression \eqref{pf:Schilder-initial1} for $\rho_\epsilon^\mu(F)$, we deduce $\int_{\R^d} \rho^{\delta_x}_\epsilon(F)\,\mu(dx) - \eta \le \rho_\epsilon^\mu(F)$. As $\eta > 0$ was arbitrary, this proves \eqref{eqdisintegrationrho}.

Now we show \eqref{eqlimitgeneralvar}. The key is to observe from \eqref{eq BBD general var} that
$$\rho^{\delta_x}_\epsilon(F) = \rho^{g_{{\epsilon}}}\left( F_\epsilon^x\right),$$
where $g_\epsilon(q):=g(\sqrt{\epsilon}q)$ and $F_\epsilon^x(\omega):=F(x+\sqrt{\epsilon}\omega)$. Indeed,
\begin{align*}
\rho^{g_{{\epsilon}}}\left( F_\epsilon^x\right) &=\sup_{q \in \L_b} \E^{\W_1^{\omega(0)=0}}\left[ F\left(x+\sqrt{\epsilon} W + \sqrt{\epsilon} \int_0^\cdot q(s) ds \right) - \int_0^1 g(t,\sqrt{\epsilon}  q(t))dt \right ]\\
&= \sup_{q \in \L_b} \E^{\W_\epsilon^{\omega(0)=0}}\left[ F\left(x+ W + \int_0^\cdot  q(s) ds \right) - \int_0^1 g(t,q(t))dt \right ]\\
&= \sup_{Q\ll \W_\epsilon^{\omega(0)=0}} \E^Q\left[ F\left(x+ W \right) - \int_0^1 g(t,q^Q(t))dt \right ]\\
&= \sup_{Q\ll \W_\epsilon^{\omega(0)=x}} \E^Q\left[ F\left( W \right) - \int_0^1 g(t,q^Q(t))dt \right ]\\
&= \rho^{\delta_x}_\epsilon(F),
\end{align*}
where we used \eqref{eq BBD} in the first and third equalities. Thus Theorem \ref{co:Schilder-finaltime} implies
$$\lim_{\epsilon \downarrow 0} \rho^{\delta_x}_\epsilon(F) = \sup_{\omega\in \C_0}\left ( F(x+\omega) - \int_0^1 g(t,\dot{\omega}(t))dt  \right )= \sup_{\omega \in \C_x}\left(F(\omega) - \int_0^1g(t,\dot{\omega}(t))dt\right).$$
With this at hand, we conclude by \eqref{eqdisintegrationrho} and dominated convergence.
\end{proof}

\section{Application to Schr\"odinger-type problems}
\label{sec Schroedinger details}

Our aim is to prove the results stated in Section \ref{sec:schilder.to.schroe}. We first need some preparatory lemmas. We carry on with the notation of Section \ref{sec extension of results}, recalling the convention that $\int_0^1g(t,\dot\omega(t))dt=\infty$ if $\omega$ is not absolutely continuous. We introduce the following very important functional
\[
\alpha^\mu_0(Q):= \E^Q\left [ \int_0^1 g(t,\dot W(t) )dt \right ].
\]
We also recall that $Z$ is a separable Banach space (of observations) and that $H: \C\to Z$, the observable, is a continuous linear operator. 

The following $\Gamma$-convergence type result is a crucial technical step, and part \emph{(i)} of it relies on our Schilder-type result (Proposition \ref{th:Schilder-initial}) in an essential way. Recall that $\W_\epsilon = \W \circ (\sqrt{\epsilon}W)^{-1}$ denotes the law of a standard Brownian motion times $\sqrt{\epsilon}$.

\begin{lemma}\label{lem convolution}
As $\epsilon \downarrow 0$, $\tilde{\alpha}^\mu_\epsilon$ converges to the function $\alpha^\mu_0$ in the sense of $\Gamma$-convergence. This means that for all $Q \in \mathcal{P}(\C)$:
\begin{enumerate}
\item[(i)] Whenever $Q_\epsilon \to Q$, then $$\liminf\limits_{\epsilon\downarrow 0} \tilde{\alpha}^\mu_\epsilon(Q_\epsilon) \geq \alpha^\mu_0(Q).$$
\item[(ii)] There exists some $\tilde{Q}_\epsilon\to Q$ such that
$$\limsup\limits_{\epsilon\downarrow 0} \tilde{\alpha}^\mu_\epsilon(\tilde{Q}_\epsilon) \leq \alpha^\mu_0(Q).$$
\end{enumerate}
Moreover, the sequence $\{\tilde{Q}_\epsilon\}$ in $(ii)$ can be explicitly taken as $\tilde{Q}_\epsilon := Q * \W_\epsilon$.
\end{lemma}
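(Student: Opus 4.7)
The plan is to handle the two parts separately and, in each, to reduce to the case $Q^0=\mu$. For part (i): if $Q^0\neq \mu$ then weak convergence $Q_\epsilon\to Q$ forces $Q_\epsilon^0\neq \mu$ for all small $\epsilon$, so $\tilde{\alpha}_\epsilon^\mu(Q_\epsilon)=+\infty$ and the $\liminf$ is trivially infinite. For part (ii), the case $Q^0\neq \mu$ is vacuous once we adopt the tacit convention $\alpha_0^\mu(Q)=+\infty$ for $Q^0\neq\mu$ (the one that is consistent with how this functional enters in the Schr\"odinger applications).

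For part (ii) I would use the explicit sequence $\tilde Q_\epsilon := Q \ast \W_\epsilon$, realized on a single probability space as the law of $Y := W^\star + \sqrt{\epsilon}\,B$ with $W^\star\sim Q$ independent of a standard $d$-dimensional Brownian motion $B$. Weak convergence $\tilde Q_\epsilon\to Q$ is then immediate from $\sqrt{\epsilon}\,B\to 0$ uniformly on $[0,1]$ and dominated convergence. Assuming $\alpha_0^\mu(Q)<\infty$ (else the bound is trivial), so that $Q$-a.s.\ $W$ is absolutely continuous, I would verify $\tilde Q_\epsilon\in\mathcal{P}_\epsilon^\ast(\C)$ and identify its drift via the classical innovation lemma (see \cite[Exercise (5.15)]{Revuz1999}): writing $\mathcal{G}_t$ for the completed filtration generated by $Y$ and setting
\begin{align*}
\hat q(t) := \E\bigl[\dot W^\star(t)\,\big|\,\mathcal{G}_t\bigr], \qquad \hat B(t) := \tfrac{1}{\sqrt{\epsilon}}\Bigl(Y(t) - Y(0) - \int_0^t \hat q(s)\,ds\Bigr),
\end{align*}
the process $\hat B$ is a $\mathcal{G}$-Brownian motion, hence $q^{\tilde Q_\epsilon} = \hat q$. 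Conditional Jensen's inequality, using convexity of $g(t,\cdot)$, then yields
\begin{align*}
\tilde{\alpha}_\epsilon^\mu(\tilde Q_\epsilon) = \E\Bigl[\int_0^1 g\bigl(t,\hat q(t)\bigr)\,dt\Bigr] \le \E\Bigl[\int_0^1 g\bigl(t,\dot W^\star(t)\bigr)\,dt\Bigr] = \alpha_0^\mu(Q),
\end{align*}
valid for every $\epsilon>0$, which a fortiori delivers the $\limsup$ bound.

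For part (i), the plan is via convex duality, hinged on Proposition~\ref{th:Schilder-initial}. Set $L(\omega) := \int_0^1 g(t,\dot\omega(t))\,dt$. For any $F\in C_b(\C)$, Lemma~\ref{thm technical gen} gives $\tilde{\rho}_\epsilon^\mu(F) = \rho_\epsilon^\mu(F)$ and hence
\begin{align*}
\tilde{\alpha}_\epsilon^\mu(Q_\epsilon) \ge \E^{Q_\epsilon}[F] - \rho_\epsilon^\mu(F).
\end{align*}
Taking $\liminf_{\epsilon\downarrow 0}$, weak convergence $Q_\epsilon\to Q$ together with the Schilder-type limit \eqref{eqlimitgeneralvar} yields
\begin{align*}
\liminf_{\epsilon\downarrow 0} \tilde{\alpha}_\epsilon^\mu(Q_\epsilon) \ge \E^Q[F] - \int_{\R^d} \sup_{\omega\in\C_x}\bigl(F(\omega) - L(\omega)\bigr)\mu(dx).
\end{align*}
I would supremize over $F\in C_b(\C)$ by restricting to $F\le L$ pointwise: then $\sup_{\omega\in\C_x}(F(\omega)-L(\omega))\le 0$ for every $x$, so the $\mu$-integral is non-positive and the right-hand side is bounded below by $\E^Q[F]$. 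Since $L$ is lower semicontinuous and bounded below (by coercivity, convexity and lsc of $g(t,\cdot)$ via Lemma~\ref{le:H1-tightness}), the standard approximation $L\wedge n = \sup\{F\in C_b(\C): F\le L\wedge n\}$ combined with monotone convergence gives $\sup\{\E^Q[F] : F\in C_b(\C),\,F\le L\} = \E^Q[L] = \alpha_0^\mu(Q)$, concluding the $\liminf$ inequality.

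The main technical obstacle is this last step in part~(i): identifying the supremum of the dual expression over $F\in C_b(\C)$ with $\alpha_0^\mu(Q)$. This is a Fenchel--Moreau-type equality, which I plan to establish by the elementary ``$F\le L$'' restriction above; the essential ingredients are the lower semicontinuity and coercivity of the action functional $L$ provided by assumption (TI) and Lemma~\ref{le:H1-tightness}.
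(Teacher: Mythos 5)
Your proposal is correct, and its skeleton coincides with the paper's: part (i) starts, exactly as in the paper, from the trivial duality bound $\tilde{\alpha}^\mu_\epsilon(Q_\epsilon)\ge \E^{Q_\epsilon}[F]-\tilde\rho^\mu_\epsilon(F)$ combined with Lemma \ref{thm technical gen} and the small-noise limit \eqref{eqlimitgeneralvar} of Proposition \ref{th:Schilder-initial}, and part (ii) uses the same recovery sequence $Q*\W_\epsilon$. The genuine divergence is the endgame of (i): the paper introduces an auxiliary function $h$ of the initial point and applies Kantorovich duality for the cost $\int_0^1 g(t,\dot\omega(t))dt+\Psi_{\omega(0)}(x)$ (a $\{0,+\infty\}$ penalty forcing $x=\omega(0)$), identifying the supremum with the transport primal, which equals $\alpha^\mu_0(Q)$ when $Q^0=\mu$ and $+\infty$ otherwise; you instead restrict to test functions $F\le L:=\int_0^1 g(t,\dot\omega(t))dt$, discard the then non-positive $\mu$-integral, and use lower semicontinuity of $L$ (obtainable from Lemma \ref{le:H1-tightness}, as the paper itself does for deterministic paths in the proof of Theorem \ref{thm HL}) plus monotone approximation by bounded continuous functions to get $\sup\{\E^Q[F]:F\in C_b(\C),\,F\le L\}=\E^Q[L]$. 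Your route is more elementary and avoids optimal transport; what the paper's route buys is that the liminf is automatically $+\infty$ when $Q^0\neq\mu$, i.e.\ $\Gamma$-convergence to the functional constrained to $Q^0=\mu$ — information you recover separately, and correctly, from continuity of the time-zero marginal under weak convergence. Your convention that $\alpha^\mu_0$ is $+\infty$ off $\{Q^0=\mu\}$ also matches the paper's intent: its proof of (ii) tacitly assumes $Q^0=\mu$. Finally, your innovation-plus-conditional-Jensen computation in (ii) is precisely the argument by which the paper proves convexity of $\tilde\alpha^\mu_\epsilon$ (Appendix), so that part is the same proof unpacked; just take care, as the paper does elsewhere via optional projection, to choose $\hat q$ as a progressively measurable functional of the observed path so that it qualifies as $q^{\tilde Q_\epsilon}$ in the definition of $\mathcal{P}^*_\epsilon(\C)$.
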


\begin{proof}
We first show $(ii)$. We may assume $Q$ is such that $\alpha^\mu_0(Q)<\infty$, and take
$$\tilde{Q}_\epsilon:= Q * \W_\epsilon:= \int_{\C} \W\circ(\bar{\omega} +\sqrt{\epsilon}W)^{-1}Q(d\bar{\omega}).$$
To be completely clear, this means 
$$\int_\C Fd\tilde{Q}_\epsilon = \int_\C\int_\C F(\bar{\omega} +\sqrt{\epsilon} \,\omega )\W(d\omega)Q(d\bar{\omega}).$$
It is readily verified, via Lebesgue dominated convergence, that $\tilde{Q}_\epsilon \to Q$ weakly. Since $\alpha^\mu_0(Q)<\infty$ it follows that $Q$ is concentrated on absolutely continuous paths, so as a consequence $\tilde{Q}_\epsilon\in \mathcal{P}^*_\epsilon(\C)$. Furthermore, $\tilde{Q}_\epsilon^0=Q^0=\mu$.  As per Lemma \ref{thm technical gen}, we know that $\tilde{\alpha}_\epsilon^\mu$ is convex. This implies
$$\tilde{\alpha}_\epsilon^\mu(\tilde{Q}_\epsilon)\leq \int_\C \tilde{\alpha}_\epsilon^\mu\left(\W\circ(\bar{\omega} + \sqrt{\epsilon}W)^{-1} \right )Q(d\bar{\omega}) = \int_\C \int_0^1 g(t,\dot{\bar{\omega}}(t))dt\,Q(d\bar{\omega}) = \alpha^\mu_0(Q),$$
so taking limsup we conclude.

We proceed to show $(i)$. We take $Q_\epsilon\to Q$ and assume without loss of generality that $\tilde{\alpha}^\mu_\epsilon(Q_\epsilon) <\infty$. By the duality formula in Lemma \ref{thm technical gen}, and by Proposition \ref{th:Schilder-initial}, we have for any $F\in C_b(\C)$:
\begin{align*}
\liminf\limits_{\epsilon\downarrow 0} \tilde{\alpha}^\mu_\epsilon(Q_\epsilon) &\geq \liminf\limits_{\epsilon\downarrow 0} \left\{ \E^ {Q_\epsilon}[F] - \tilde\rho_\epsilon^\mu(F) \right\} \\ &= \E^Q[F] - \int_{\R^d}\,\, \sup_{\omega \in \C_x}\left(F(\omega) - \int_0^1g(t,\dot{\omega}(t))dt\right)\mu(dx),
\end{align*}
where we recall the notation $\C_x := \{\omega \in \C : \omega(0)=x\}$.
Now, the function 
$$\R^d \ni x\mapsto  \inf_{\omega \in \C_x}\left( \int_0^1g(t,\dot{\omega}(t))dt - F(\omega) \right),$$ is the pointwise supremum of all functions $h$ satisfying $h(x)+F(\omega)\leq \int_0^1g(t,\dot{\omega}(t))dt + \Psi_{\omega(0)}(x) $ for all $x \in \R^d$ and all $\omega \in \C$, where we define $\Psi_a(x) = +\infty$ if $x \neq a$ and $\Psi_a(x)=0$ otherwise. Hence we have
$$\liminf\limits_{\epsilon\downarrow 0} \tilde{\alpha}^\mu_\epsilon(Q_\epsilon) \geq \!\sup_{\substack{F\in C_b(\C)\\ h\in L^{1}(\R^d,\mu)}}\!\!\!\left\{ \E^Q[F] + \int hd\mu\,:\, h(x)+F(\omega)\leq \int_0^1g(t,\dot{\omega}(t))dt + \Psi_{\omega(0)}(x), \, \forall x,\omega \right \}.$$
By Kantorovich duality \cite[Theorem 1.3]{villani2003topics}, the right-hand side is equal to 
$$\inf_\pi\int_{\C\times \R^d}\left ( \int_0^1 g(t,\dot{\omega}(t))dt + \Psi_{\omega(0)}(x)   \right ) \pi(d\omega,dx),$$
where the infimum is over all $\pi \in \P(\C\times \R^d)$ with first marginal $Q$ and second marginal $\mu$.
Unless $\mu = Q^0$, this quantity is clearly infinite, and it is then straightforward to check that the entire expression reduces to  $\alpha^\mu_0(Q)$.
\end{proof}

As a final preparation for the proof of Corollary \ref{coro Schroedinger}, we need the following compactness lemma.

\begin{lemma}\label{lem tightness mikami}
The family $\{ \tilde{\alpha}^\mu_\epsilon :\epsilon\leq 1 \}$ is equicoercive, namely:
$$\bigcup_{\epsilon\leq 1}\{\tilde{\alpha}^\mu_\epsilon \leq c\}\text{ is tight for each $c\in\R$}.$$
\end{lemma}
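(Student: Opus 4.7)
The plan is to decompose, under any $Q$ from the sublevel set, the canonical process $W$ into three pieces whose laws can be controlled independently and shown to be tight uniformly in $\epsilon \in (0,1]$ and in the choice of $Q$.

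Fix $c \in \R$ and take $Q \in \{\tilde{\alpha}^\mu_\epsilon \leq c\}$ for some $\epsilon \in (0,1]$. By definition of $\P^*_\epsilon(\C)$, the process $B^Q(t) := \frac{1}{\sqrt{\epsilon}}(W(t) - W(0) - \int_0^t q^Q(s)ds)$ is a standard $Q$-Brownian motion, so we may write
\[
W = W(0) + A^Q + \sqrt{\epsilon}\, B^Q, \qquad A^Q(t) := \int_0^t q^Q(s)\,ds,
\]
viewing $W(0)$ as a constant path in $\C$. The starting value $W(0)$ has law $\mu$, hence is trivially tight, and $\sqrt{\epsilon}\,B^Q$ has law $\W_\epsilon$, so the family $\{\W_\epsilon : \epsilon \in (0,1]\}$ is tight in $\C$ (e.g.\ because the modulus of continuity of $\sqrt{\epsilon}W$ is dominated by that of $W$).

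The main step is to show tightness of the drift piece $\{\mathrm{Law}_Q(A^Q)\}$ in $\C_0$, uniformly over the sublevel set. For this I would exploit the coercivity clause of (TI): for any $K>0$ there exists $C_K<\infty$ such that $|q| \leq C_K + g(t,q)/K$ for every $(t,q)$. This gives, for $0 \le s < t \le 1$,
\[
|A^Q(t) - A^Q(s)| \leq \int_s^t |q^Q(r)|\,dr \leq C_K (t-s) + \frac{1}{K}\int_0^1 g(r,q^Q(r))\,dr,
\]
whose $Q$-expectation is at most $C_K(t-s) + c/K$ thanks to the hypothesis $\tilde\alpha^\mu_\epsilon(Q) \le c$. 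By Markov's inequality, first choosing $K$ large to absorb $c/K$ and then $(t-s)$ small, one obtains the standard modulus-of-continuity/boundedness conditions for tightness in $\C_0$, uniformly over $Q$; equivalently, this is precisely the content of the auxiliary Lemma \ref{le:H1-tightness} used elsewhere in the paper.

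To conclude, I would combine the three tight families: for any $\delta>0$ one picks compact sets $K_0 \subset \R^d$ and $K_1, K_2 \subset \C_0$ with $Q$-mass of each component exceeding $1-\delta/3$, and notes that the Minkowski sum $K_0 + K_1 + K_2$ is compact in $\C$ and carries $Q$-mass greater than $1-\delta$ uniformly over the sublevel set. The main obstacle is the drift piece, but it reduces entirely to the coercivity in (TI); everything else is essentially bookkeeping on top of the weak-compactness result in Lemma \ref{thm technical}, adapted to keep track of the vanishing volatility.
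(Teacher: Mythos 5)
Your proof is correct and follows essentially the same route as the paper: the paper's proof of Lemma \ref{lem tightness mikami} likewise reduces to the inf-tightness argument of Lemma \ref{thm technical}, noting that the initial law $\mu$ is fixed, the martingale part has law $\W_\epsilon$ with $\epsilon\le 1$ (uniformly tight), and the drift is controlled via the coercivity in (TI) together with $\tilde{\alpha}^\mu_\epsilon\le c$, i.e.\ Lemma \ref{le:H1-tightness}. Your Minkowski-sum conclusion is just a repackaging of the paper's ``marginal tightness implies joint tightness plus continuous mapping'' step, and the only cosmetic point is that the coercivity bound should read $|q|\le C_K+(g(t,q)+b)/K$ with $b$ a lower bound for $g^-$, which is absorbed into constants.
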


\begin{proof}
This is the same argument as in the \emph{inf-tightness} part of the proof of Lemma \ref{thm technical}, which we provide in Appendix \ref{sec pending proofs} below. The point is that the initial distribution of the canonical process is independent of $\epsilon$, its quadratic variation is uniformly bounded in $\epsilon$, and its drift is bounded in $L^1$ independently of $\epsilon$ thanks to Assumption (TI) and the conditions $\tilde{\alpha}^\mu_\epsilon \leq c$.
\end{proof}

\noindent\textbf{Proof of Corollary \ref{coro Schroedinger}.}
With the notation we have built up, equality \eqref{eqMikamiLeo1} is equivalent to
\begin{align*}
\lim_{\epsilon\downarrow 0}\,\,\, \inf \left\{ \tilde{\alpha}^\mu_\epsilon(Q) \, :\,Q\in\mathcal{P}(\C), \ H(Q)=\nu_\epsilon \right \} & = \inf \left\{ \alpha^\mu_0(Q)\, :\,Q\in\mathcal{P}(\C), \ H(Q)=\nu  \right\}.
\end{align*}
We begin by proving the upper bound,
\begin{align*}
\limsup_{\epsilon\downarrow 0}\,\,\, \inf \left\{ \tilde{\alpha}^\mu_\epsilon(Q) \, :\,Q\in\mathcal{P}(\C), \ Q\circ H^{-1}=\nu_\epsilon \right \} & \leq\inf \left\{ \alpha^\mu_0(Q)\, :\,Q\in\mathcal{P}(\C), \ Q\circ H^{-1}=\nu  \right\}.
\end{align*}
If there is no $Q \in \P(\C)$ with $Q\circ H^{-1}=\nu$ the right-hand side is $+\infty$. Otherwise, for each $Q \in \P(\C)$ with $Q\circ H^{-1}=\nu$ we introduce $\tilde{Q}_\epsilon := Q * \W_\epsilon$ as in Lemma \ref{lem convolution}. By linearity of $H$ we have
\[
\tilde{Q}_\epsilon\circ H^{-1}=(Q\circ H^{-1})*(\W_\epsilon\circ H^{-1})=\nu_\epsilon.
\]
By Lemma \ref{lem convolution}, for each $Q \in \P(\C)$ we have
\[
\limsup_{\epsilon\downarrow 0}\,\,\, \inf \left\{ \tilde{\alpha}^\mu_\epsilon(Q') \, :\,Q'\in\mathcal{P}(\C), \ Q'\circ H^{-1}=\nu_\epsilon \right \} \leq\limsup_{\epsilon\downarrow 0} \tilde{\alpha}^\mu_\epsilon(\tilde Q_\epsilon) \leq \alpha^\mu_0(Q).
\]
Infimize over $Q \in \P(\C)$ satisfying $Q \circ H^{-1}=\nu$  to get the announced upper bound.

It remains to prove the lower bound,
\begin{align*}
\liminf_{\epsilon\downarrow 0}\,\,\, \inf \left\{ \tilde{\alpha}^\mu_\epsilon(Q) \, :\,Q\in\mathcal{P}(\C), \ Q\circ H^{-1}=\nu_\epsilon \right \} &\geq \inf \left\{ \alpha^\mu_0(Q)\, :\,Q\in\mathcal{P}(\C), \ Q\circ H^{-1}=\nu  \right\}.
\end{align*}
If the left-hand side is infinite there is nothing to prove. 
Otherwise, there exist sequences $\epsilon_n\downarrow 0$ and $Q_n \in \P(\C)$ with $Q_n \circ H^{-1} = \nu_{\epsilon_n}$ such that
\[
\lim_{n\rightarrow\infty}\tilde{\alpha}^\mu_{\epsilon_n}(Q_n) = \liminf_{\epsilon\downarrow 0}\,\,\, \inf \left\{ \tilde{\alpha}^\mu_\epsilon(Q) \, :\,Q\in\mathcal{P}(\C), \ Q\circ H^{-1}=\nu_\epsilon \right \}
\]
and also $\sup_n\tilde{\alpha}^\mu_{\epsilon_n}(Q_n) < \infty$. The latter property along with Lemma \ref{lem tightness mikami} ensures that we may pass to a further subsequence and assume that $Q_n \rightarrow Q$ for some $Q \in \P(\C)$. Continuity of $H$ implies $Q \circ H^{-1} = \lim_nQ_n \circ H^{-1} = \lim_n \nu_{\epsilon_n} = \nu$. Moreover, by Lemma \ref{lem convolution}, we have $\liminf_{n\rightarrow\infty}\tilde{\alpha}^\mu_{\epsilon_n}(Q_n) \ge \alpha^\mu_0(Q)$, and we deduce the aforementioned lower bound.

That the problems in \eqref{eq prob lhs eps} admit an optimizer, provided there exists a feasible element, follows from the compactness of the sub-level sets of $\tilde{\alpha}_\epsilon^\mu$ (see Lemma \ref{thm technical gen}), since the constraint $Q\circ H^{-1}=\nu_\epsilon$ is closed under weak convergence of measures. The analogous result for \eqref{eq prob rhs 0} follows taking $\epsilon=0$.

If an optimizer for $Q_\epsilon$ exists for all $\epsilon>0$, and if $\bar{Q}$ is an accumulation point of $\{Q_\epsilon\}_\epsilon$, then $\bar{Q}$ must be feasible for \eqref{eq prob rhs 0}. Thus there exists $Q$ an optimizer for \eqref{eq prob rhs 0}, or equivalently for $$\inf \left\{ \alpha^\mu_0(Q)\, :\,Q\in\mathcal{P}(\C)\,\text{with } Q\circ H^{-1}=\nu  \right\}.$$
Defining $\tilde Q_\epsilon$ as in Lemma \ref{lem convolution},we have
$$\alpha^\mu(Q)=\lim \tilde{\alpha}^\mu_\epsilon(\tilde Q_\epsilon) \geq \liminf \tilde{\alpha}^\mu_\epsilon(Q_\epsilon)\geq \alpha^\mu_0(\bar Q), $$
by Lemma \ref{lem convolution}. So $\bar Q$ is optimal for \eqref{eq prob rhs 0} as desired.\hfill\qedsymbol

{\ } \\

We now proceed to the proof of Corollary \ref{coro g sub quadratic}. From here on, we take $$Z=\mathbb R^d\text{ and }H(\omega)=\omega(1),$$ so we are in the classical situation. We will make use of a technical estimate for Brownian bridges. We denote by $$\W_{\epsilon}^{x,y}[a,b] \in \mathcal P(C([a,b];\mathbb R^d)),$$ the Brownian bridge from ``$x$ at time $a$ to $y$ at time $b$'' with instantaneous variance $\epsilon$. This is the law, on the space of continuous functions on $[a,b]$, of Brownian motion with volatility $\epsilon$ conditioned to start in $x$ and end in $y$. We refer to \cite[Theorem 40.3]{RoWi00} for a characterization of (multidimensional) Brownian bridges.

\begin{lemma}\label{lem technical bridge}
Let $a<b$. The canonical process admits under $\W_{\epsilon}^{x,y}[a,b]$ the decomposition $$W(t)=x + \int_a^t \frac{y - W(t)}{b-s} ds + \sqrt{\epsilon}B(t),$$
where $B$ is a standard $d$-dimensional Brownian motion on $[a,b]$. For all $1<r<2$ we have
\begin{align}\label{eq gaussian estimates}
\E^{\W_{\epsilon}^{x,y}[a,b]}\left[\int_a^b \left| \frac{y - W(t)}{b-s} \right|^r dt\right] \leq K_r|y-x|^r|b-a|^{1-r} + K_r |b-a|^{1-(r/2)}\epsilon^{r/2},
\end{align}
where $K_r < \infty$ is a constant depending only on $r$.
\end{lemma}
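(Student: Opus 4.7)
The plan is to split into the SDE representation and the integral bound. The first assertion is classical: under $\W_\epsilon^{x,y}[a,b]$ the canonical process has the representation displayed in the statement, since it is the Brownian bridge SDE scaled by $\epsilon$ (see e.g.\ the characterization cited from \cite{RoWi00}). The novelty is the $L^r$ control on the drift.

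For the estimate, I would exploit the fact that under $\W_\epsilon^{x,y}[a,b]$ the random variable $W(t)$ is Gaussian. A direct computation (or a scaling argument from $\epsilon=1,a=0,b=1,x=0,y=0$) gives
\begin{equation*}
\E[W(t)] = x + \frac{t-a}{b-a}(y-x), \qquad \mathrm{Var}(W(t)) = \epsilon\,\frac{(t-a)(b-t)}{b-a},
\end{equation*}
so that $(y-W(t))/(b-t)$ is Gaussian with mean $(y-x)/(b-a)$ and variance $\epsilon(t-a)/[(b-t)(b-a)]$. Using the inequality $|u+v|^r \le 2^{r-1}(|u|^r+|v|^r)$ and $\E|N|^r<\infty$ for $N$ standard normal,
\begin{equation*}
\E^{\W_\epsilon^{x,y}[a,b]}\!\left|\frac{y-W(t)}{b-t}\right|^r \le 2^{r-1}\!\left(\frac{|y-x|^r}{(b-a)^r} + \E|N|^r\left(\frac{\epsilon(t-a)}{(b-t)(b-a)}\right)^{r/2}\right).
\end{equation*}

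Now I would integrate over $t \in [a,b]$. The first term contributes $2^{r-1}|y-x|^r(b-a)^{1-r}$. For the second, with the change of variables $s=(t-a)/(b-a)$,
\begin{equation*}
\int_a^b\left(\frac{t-a}{b-t}\right)^{r/2}\!dt = (b-a)\int_0^1 \frac{s^{r/2}}{(1-s)^{r/2}}\,ds,
\end{equation*}
and this Beta integral $B(r/2+1,1-r/2)$ is finite precisely because $r<2$, which is the crucial place the hypothesis enters. Pulling out $\epsilon^{r/2}(b-a)^{-r/2}$ and multiplying by $(b-a)$ yields the second term $K_r\,\epsilon^{r/2}(b-a)^{1-r/2}$. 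Combining gives the claimed inequality, with $K_r$ depending only on $r$.

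The only minor subtlety is making sure the constants do not depend on $x$, $y$, $a$, $b$, or $\epsilon$, which the scaling above guarantees. The condition $1<r<2$ is used exactly to ensure the Beta integral converges; $r>1$ does not actually play a role in the computation itself (it suffices that $r \ge 1$ to invoke the elementary convexity inequality for $|u+v|^r$), and the strict bound $r<2$ is the true obstruction and the reason the result is genuinely restricted to subquadratic growth, which aligns with the hypothesis of Corollary \ref{coro g sub quadratic}.
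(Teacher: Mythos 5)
Your proposal is correct and follows essentially the same route as the paper: identify the Gaussian law of $W(t)$ under the bridge measure (same mean and variance formulas), apply the convexity inequality $|u+v|^r\le 2^{r-1}(|u|^r+|v|^r)$, and reduce the second term to the Beta-type integral $\int_0^1(s/(1-s))^{r/2}ds$, finite precisely because $r<2$. The only differences are cosmetic (the paper first shifts to the interval $[0,b-a]$ and keeps the dimension-$d$ Gaussian moment $\int|z|^r\,d\W_1^1(z)$ explicit), so there is nothing to add.
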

\begin{proof}
The claimed decomposition is classical \cite[Theorem 40.3]{RoWi00}.
To prove \eqref{eq gaussian estimates}, it suffices to consider the interval $[0,b-a]$ rather than $[a,b]$. Let $\delta=b-a$.
By conditioning of Gaussian distributions, we know that $W(t)$ is Gaussian with mean $\frac{\delta-t}{\delta}x+\frac{t}{\delta}y$ and variance matrix $\frac{\epsilon}{\delta} t(\delta-t)\text{Id}$, for each $t \in (0,\delta)$, under $\W_{\epsilon,\delta}^{x,y}$. From this, denoting $\W_1^1=\mathcal{N}(0,\text{Id})$, it follows
{
\begin{align*}
\E^{\W_{\epsilon,\delta}^{x,y}}\left[\int_0^\delta \left| \frac{y - W(t)}{b-s} \right|^r dt\right] & = \int_0^\delta \int_{\R^d} \left | \frac{y-x}{\delta}+z\,\sqrt{\frac{\epsilon t}{\delta(\delta-t)}} \right |^r d\W_1^1(z) dt\\
&\leq 2^{r-1}|y-x|^r\delta^{1-r} + 2^{r-1}\int_{\R^d}|z|^rd\W^1_1(z) \int_0^\delta \left ( \frac{\epsilon t}{\delta(\delta-t)}\right )^{r/2} dt  \\
&\le  K_r|y-x|^r\delta^{1-r} + K_r\delta^{1-(r/2)}\epsilon^{r/2}, 
\end{align*}}
where $K_r = 2^{r-1}\int_{\R^d}|z|^rd\W^1_1(z)\int_0^1\left ( \frac{t}{1-t}\right )^{r/2} dt$. Note that $K_r < \infty$ for $1 < r < 2$.
\end{proof}

\noindent\textbf{Proof of Corollary \ref{coro g sub quadratic}.}
Because of Lemma \ref{lem convolution}(i), the lower bound can be established exactly as in the proof of Corollary \ref{coro Schroedinger}. The delicate point is proving the upper bound
\begin{align}
\limsup_{\epsilon\downarrow 0}\,\,\, \inf \left\{ \tilde{\alpha}^\mu_\epsilon(Q) \, :\,Q\in\mathcal{P}(\C), \ Q^1=\nu \right \} & \leq\inf \left\{ \alpha^\mu_0(Q)\, :\,Q\in\mathcal{P}(\C), \ Q^1=\nu  \right\}, \label{pf:Schro-bridge1}
\end{align}
for which we cannot rely on Lemma \ref{lem convolution}(ii) as we did in the proof of Corollary \ref{coro Schroedinger}, because we are working now with $\nu$ instead of $\nu_\epsilon$ on the left-hand side.
If the right-hand side is infinite there is nothing to prove. Let us take any $Q$ with  ${\alpha}^\mu_0(Q)<\infty$ and $Q^1=\nu$. We introduce the measures
\[
\pi_{s,t}= Q\circ (W(s),W(t))^{-1}, \quad\quad \text{ and } \quad\quad \pi_{\epsilon,(s,t)}:= \pi_{s,t}*(\W_{\epsilon}^s\otimes\delta_0).
\]
That is, $\pi_{\epsilon,(s,t)} \in \P(\R^d \times \R^d)$ is the joint law of $(X(s)+\sqrt{s\epsilon}Z\, ,\,X(t))$, where $X\sim Q$ and $Z$ is an independent standard $d$-dimensional Gaussian.
The goal is to define now $\tilde Q_\epsilon$ satisfying the statement in Lemma \ref{lem convolution}(ii), but with $\tilde Q_\epsilon^1=\nu$ (and of course $\tilde Q_\epsilon^0=\mu$).

Let $\delta < 1$, which we will later send to zero. We will define first $\tilde Q_{\epsilon,\delta}$ by convolution of $Q$ and $\W_\epsilon$ in the time interval $[0,1-\delta]$, and we then steer toward the appropriate marginal $\nu$ at time $1$ by using a suitable mixture of Brownian bridges. Concretely, we define $\tilde Q_{\epsilon,\delta}$ uniquely by the four properties:
\begin{enumerate}
\item $\tilde Q_{\epsilon,\delta}\circ \left(\{W(t)\}_{t\leq 1-\delta}\right)^{-1}= (Q*P_\epsilon)\circ \left(\{W(t)\}_{t\leq 1-\delta}\right)^{-1}$
\item $\tilde Q_{\epsilon,\delta}\circ (W(1-\delta),W(1))^{-1} = \pi_{\epsilon,(1-\delta,1)} $
\item $\tilde Q_{\epsilon,\delta}(W(1)\in \cdot \, | \, \{W(t)\}_{t\leq 1-\delta}) = \tilde Q_{\epsilon,\delta}(W(1)\in \cdot \, | \, W(1-\delta))$, a.s.
\item $\tilde Q_{\epsilon,\delta}(\{W(t)\}_{t\in [1-\delta,1]} \, | \, W(1), \ \{W(t)\}_{t \le 1-\delta}) = P_\epsilon^{W(1-\delta),W(1)}[1-\delta,1]$, a.s.
\end{enumerate} 
We remark that $\tilde  Q_{\epsilon,\delta}$ is a semimartingale law for which the martingale part is $\sqrt{\epsilon}$ times a Brownian motion and, crucially, for which the time-$0$ and time-$1$ marginals are, respectively,
\[
\tilde Q_{\epsilon,\delta}^0=\mu \quad \text{ and } \quad \tilde Q_{\epsilon,\delta}^1= \nu.
\]
Because $Q_{\epsilon,\delta}=Q * \W_\epsilon$ on $\F_t$, we also have
\begin{align}
\tilde{\alpha}^\mu_\epsilon(\tilde Q_{\epsilon,\delta})= \E^{Q*\W_\epsilon}\left [ \int_0^{1-\delta} g\left (q^{Q*\W_\epsilon}(t)\right )dt \right ] +  A_{\epsilon, [1-\delta,1]}^\mu, \label{pf:Schro-alpha}
\end{align}
where (recalling the semimartingale decomposition of $\W_{\epsilon}^{x,y}[1-\delta,1]$ stated in Lemma \ref{lem technical bridge})
$$A_{\epsilon, [1-\delta,1]}^\mu:= \int_{\R^d \times \R^d} \E^{\W_{\epsilon}^{x,y}[1-\delta,1]}\left [\int_{1-\delta}^1 g\left(\frac{y - W(t)}{1-t}\right)dt \right ] \pi_{\epsilon,(1-\delta,1)}(dx,dy).$$
Recall now the assumption that $g(q)$ grows like $|q|^r$, and use \eqref{eq gaussian estimates}  to bound $A_{\epsilon, [1-\delta,1]}^\mu$:
$$A_{\epsilon, [1-\delta,1]}^\mu \leq C \delta^{1-r}\int_{\R^d \times \R^d} |x-y|^r  \pi_{\epsilon,(1-\delta,1)}(dx,dy) + C\epsilon^{r/2}\delta^{1-(r/2)},$$
where $C < \infty$ is a constant depending only on $r$ and $g$.
Using the definition of $\pi_{\epsilon,(1-\delta,1)}$ and Jensen's inequality we deduce (keeping the same constant $C$)
\begin{align*}
A_{\epsilon, [1-\delta,1]}^\mu &\leq 2^{r-1}C\delta \E^Q\left[\left |\frac{W(1)-W(1-\delta)}{\delta}\right|^r\right]+ 2^{r-1}C\delta^{1-r}((1-\delta)\epsilon)^{r/2} \int_{\R^d} |z|^rd\W_1^1(z) \\
	&\quad + C\epsilon^{r/2}\delta^{1-(r/2)} \\
&\leq 2^{r-1} C\delta \E^Q\left[\frac{1}{\delta}\int_{1-\delta}^1|\dot W(t)|^r dt\right]+ 2^{r-1}C\delta^{1-r}\epsilon^{r/2} \int_{\R^d} |z|^rd\W_1^1(z) + C\epsilon^{r/2}\delta^{1-(r/2)}.
\end{align*}
The first term on the right-hand side goes to zero as $\delta \downarrow 0$ since we assumed $\alpha^\mu_0(Q)=\E^Q\left[\int_{0}^1|\dot W(t)|^r dt\right]$ to be finite. If we set $\delta = \sqrt{\epsilon}$ then the second and third terms vanish as well, as $\epsilon \downarrow 0$.
Let us finally define $\tilde Q_\epsilon := \tilde Q_{\epsilon,\sqrt{\epsilon}}$. By dominated convergence $\tilde Q_\epsilon \to Q$, while on the other hand we have seen that $A_{\epsilon, [1-\sqrt{\epsilon},1]}^\mu \to 0$. Recalling the equation \eqref{pf:Schro-alpha}, the proof of \eqref{pf:Schro-bridge1} would be concluded if we can show that
$$\limsup_{\epsilon\to 0}\E^{Q*\W_{\epsilon}}\left [ \int_0^{1-\sqrt{\epsilon}} g\left (q^{Q*\W_{\epsilon}}(t)\right )dt \right ] \leq \alpha^\mu_0(Q). $$
Let us call $(X,Y)$ the canonical process on $\C_0\times \C_0$ equipped with the reference measure $Q\otimes P_\epsilon$. Of course $Q*\W_{\epsilon}=Q\otimes P_\epsilon\circ (X+Y)^{-1}$ and $X$ has absolutely continuous trajectories. It follows that for every $t \in [0,1]$:
\[
q^{Q*\W_{\epsilon}}(t,\omega)=\E^{Q\otimes \W_\epsilon}[\dot X(t)\,|\, \{X(s)+Y(s)\}_{s\leq t}=\{\omega(s)\}_{s\leq t}], \quad Q*\W_\epsilon-a.e. \ \omega.
\]
By Jensen's inequality, we conclude
$$\limsup_{\epsilon\to 0}\E^{Q*\W_{\epsilon}}\left [ \int_0^{1-\sqrt{\epsilon}} g\left (q^{Q*\W_{\epsilon}}(t)\right )dt \right ] \leq \limsup_{\epsilon\to 0}\E^{Q\otimes\W_{\epsilon}}\left [ \int_0^{1-\sqrt{\epsilon}} g\left (\dot X(t)\right )dt \right ] =
 \alpha^\mu_0(Q). $$
{\ } \hfill\qedsymbol
}

\appendix

\section{Proofs of properties of $\alpha^g$}
\label{sec pending proofs}

We collect here the belated proofs of some technical results.

\begin{lemma} \label{le:H1-tightness}
Suppose $q_n \in \L$ and $A_n(t) = \int_0^tq_n(s)ds$. Suppose there exists $a > 0$ such that, for each $n$,
\begin{align}
\E\int_0^tg(t,q_n(t))dt \le a. \label{def:H1-tightness-bound}
\end{align}
Then there exist a continuous process $A$, a subsequence $A_{n_k}$ which converges in law in $\C$ to $A$, and a process $q \in \L$ such that
\begin{align}
\E\int_0^tg(t,q(t))dt \le \liminf_{k\rightarrow\infty}\E\int_0^tg(t,q_{n_k}(t))dt \label{def:H1-tightness-liminf}
\end{align}
and $A(t) = \int_0^tq(s)ds$. In particular, $(A_n)$ is tight.
\end{lemma}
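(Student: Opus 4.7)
The coercivity in (TI), paired with the uniform bound $\sup_n \E\int_0^1 g(t,q_n(t))\,dt \le a$, forces the family $(|q_n|)$ to be uniformly integrable in $L^1([0,1]\times\C,\,dt\otimes d\W)$. From this single estimate both tightness of $(A_n)$ in $\C$ and weak $L^1$-compactness of $(q_n)$ would follow, and together with the convexity and lower semicontinuity of $g(t,\cdot)$ they would yield both the identification $A=\int_0^\cdot q(s)\,ds$ and the $\liminf$ inequality.

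Write $g \ge -b$ for some $b\ge 0$. By (TI), for each $M>0$ there exists $K_M<\infty$ with $g(t,q)\ge M|q|$ uniformly in $t$ whenever $|q|\ge K_M$, which yields
\[
\E \int_0^1 |q_n(t)|\,\mathbf{1}_{\{|q_n(t)|\ge K_M\}}\,dt \;\le\; \frac{1}{M}\,\E\int_0^1 \bigl(g(t,q_n(t))+b\bigr)\,dt \;\le\; \frac{a+b}{M},
\]
so $(|q_n|)$ is uniformly integrable as claimed. For $s\le t$ with $t-s<\delta$ one then has the pathwise estimate $|A_n(t)-A_n(s)|\le K_M\delta + \int_0^1|q_n(u)|\mathbf{1}_{\{|q_n(u)|\ge K_M\}}\,du$, and a Markov estimate on the second summand, together with $A_n(0)=0$, gives $\sup_n \W(\omega(A_n,\delta)>\epsilon)\to 0$ as $\delta\to 0$. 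Hence $(A_n)$ is tight in $\C$.

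I would then extract a subsequence $A_{n_k}\to A$ in law in $\C$, invoke Skorokhod representation to pass to an auxiliary probability space on which $\widetilde A_{n_k}\to \widetilde A$ a.s.\ in $\C$ (each $\widetilde A_{n_k}$ absolutely continuous with derivative $\widetilde q_{n_k}$ distributed as $q_{n_k}$), and apply Dunford--Pettis to obtain a further subsequence along which $\widetilde q_{n_k}\rightharpoonup \widetilde q$ weakly in $L^1$ for some progressively measurable $\widetilde q$. Progressive measurability survives because the progressively measurable subspace of $L^1$ is convex and strongly closed, hence weakly closed (Mazur). The weak $L^1$ convergence gives $\widetilde A_{n_k}(t)\to \int_0^t \widetilde q(s)\,ds$ in weak $L^1$ for each $t$, while $\widetilde A_{n_k}(t)\to \widetilde A(t)$ a.s., so $\widetilde A=\int_0^\cdot \widetilde q(s)\,ds$, providing the required identification (and after relabeling, the desired $(A,q)$). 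Finally, convexity and lower semicontinuity of $g(t,\cdot)$ together with Fatou's lemma make $\Phi(q):=\E\int_0^1 g(t,q(t))dt$ strongly $L^1$-lower semicontinuous; being convex, $\Phi$ is then also weakly $L^1$-lower semicontinuous (Mazur), yielding $\Phi(\widetilde q)\le \liminf_k \Phi(\widetilde q_{n_k}) = \liminf_k \Phi(q_{n_k})$, since $\widetilde q_{n_k}$ and $q_{n_k}$ have the same law.

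The main obstacle is reconciling the two topologies --- tight convergence of the antiderivatives $A_n$ in $\C$ versus weak $L^1$ convergence of the derivatives $q_n$ --- in a way that actually identifies $A$ with $\int_0^\cdot q(s)\,ds$. The Skorokhod representation step is, in my view, the cleanest way to bridge them, since after transferring to the auxiliary space the weak $L^1$ convergence of derivatives and the almost sure convergence of antiderivatives can be combined pointwise in $t$ to yield the pathwise identity $\widetilde A=\int_0^\cdot \widetilde q(s)\,ds$.
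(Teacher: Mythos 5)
Your proposal is correct and follows essentially the same route as the paper: coercivity from (TI) gives uniform integrability of $(q_n)$ and tightness of $(A_n)$, then Skorokhod representation plus Dunford--Pettis yields a weak $L^1$ limit $q$, the identity $A=\int_0^\cdot q(s)\,ds$ is obtained by testing against bounded random variables, and the $\liminf$ inequality follows from Fatou plus convexity (strong, hence weak, lower semicontinuity of the integral functional). The only cosmetic differences are that the paper verifies tightness via Aldous' criterion with stopping times where you use the modulus-of-continuity criterion, and that you additionally record progressive measurability of the limit via Mazur, a point the paper leaves implicit.
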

\begin{proof}
We first check tightness.
By Assumption (TI), for each $c > 0$ we may find $N > 0$ such that $g(t,q) \ge c|q|$ whenever $|q| \ge N$. Moreover, there exists $b\ge0$ such that $g(t,q)\ge -b$ for all $(t,q)$. In particular, for all $(t,q)$ we have $|q| \le N + \frac 1c(g(t,q) + b)$. 
Hence, for $0 \le s < t \le 1$,
\begin{align*}
|A_n(t)-A_n(s)| &\le \int_s^t|q_n(u)|du \le \frac{1}{c}\int_s^t (g(u,q(u)) + b) \,du + N(t-s) \\
	&\le \frac{1}{c}\int_0^1 g(u,q(u)) \,du + \frac{b}{c} + N(t-s).
\end{align*}
Hence, for any $\delta_n \downarrow 0$, \eqref{def:H1-tightness-bound} yields
\begin{align*}
\limsup_{n\rightarrow\infty} \sup_n\sup_\tau \E|A_n(\tau+\delta_n) - A_n(\tau)| &\le \limsup_{n\rightarrow\infty}\left(\frac{a+b}{c} + N\delta_n \right) = \frac{a+b}{c},
\end{align*}
where the $\sup_\tau$ is over all stopping times with values in $[0,1-\delta_n]$.
As $c > 0$ was arbitrary, this shows that 
\begin{align*}
\lim_{n\rightarrow\infty} \sup_\tau \sup_n\E|A_n(\tau+\delta_n) - A_n(\tau)| = 0,
\end{align*}
and from Aldous' criterion for tightness  \cite[Theorem 16.11]{kallenberg} we conclude that $(A_n)$ is tight.

Passing to a subsequence and applying Skorohod's representation, let us now assume that there exists a continuous process $A$ such that $A_n \rightarrow A$ almost surely in $\C$, with all processes defined on some common probability space $(\Omega,\F,\PP)$. From \eqref{def:H1-tightness-bound}, assumption (TI), and the criterion of de la Vall\'ee Poisson, we conclude that $\{q^n : n \in \N\} \subset L^1 := L^1([0,1] \times \Omega, \, dt \otimes d\PP)$ is uniformly integrable and thus weakly precompact. By passing to a further subsequence, we may now assume that $q^n \rightarrow q$ weakly in $L^1$. Because $g$ is bounded from below and lower semicontinuous in its second variable, the map $q \mapsto \E\int_0^1 g(t,q(t))dt$ is lower semicontinuous in the norm topology of $L^1([0,1] \times \Omega)$ by Fatou's lemma. Because it is also convex, this map is therefore weakly lower semicontinuous on $L^1$. This yields \eqref{def:H1-tightness-liminf}.
Lastly, by dominated convergence, it holds for each bounded random variable $Z$ that
\begin{align*}
\E[ZA(t)] = \lim_{n\rightarrow\infty}\E[ZA^n(t)] = \lim_{n\rightarrow\infty}\E\left[Z\int_0^tq^n(s)ds\right] = \E\left[Z\int_0^tq(s)ds\right].
\end{align*}
Hence $A(t) = \int_0^tq(s)ds$ a.s.\ for each $t$, and by continuity we have $A = \int_0^\cdot q(s)ds$ a.s. 
\end{proof}

{\ } \\

\noindent\textbf{Proof of Lemma \ref{thm technical}} {\ }

\textbf{Convexity}: Let $\lambda\in [0,1]$, and fix $Q_0,Q_1\in {\cal P}^*$. We work on an extended probability space $\C \times \{0,1\}$, and we write $(W,X)$ to denote the identity map on this space. We define a measure $M$ on $\C \times \{0,1\}$ by requiring that the second marginal of $M$ be $\lambda \delta_0 + (1-\lambda)\delta_1$, and the conditional law of $W$ given $X$ be $Q_X$. In particular, the first marginal of $M$ is precisely $Q:=\lambda Q_0 + (1-\lambda)Q_1$.
Abbreviate $q_i:=q^{Q_i}$.
It easily follows that the process
\[
W(t) - \int_0^t q_X(s)ds
\]
defines an $M$-Brownian motion with respect to the filtration $\overline\FF=(\overline\F_t)_{t \in [0,1]}$ defined by $\overline\F_t=\F_t \otimes \sigma(X)$ on the product space. Now define the process $q=(q(t))_{t \in [0,1]}$ on $\C \times \{0,1\}$ to be the optional projection of the process $(q_X(t))_{t \in [0,1]}$ on the filtration generated by $W$. In particular,
\[
q(t) = \E^M[q_X(t) \, | \, (W_s)_{s \le t}] = \E^M[ {\bf 1}_{\{X=0\}}q_0(t) + {\bf 1}_{\{X=1\}}q_1(t) \, | \, (W_s)_{s \le t}  ].
\]
A quick computation reveals that $W - \int_0^\cdot q(t)dt$
is still an $M$-martingale. But this process is adapted to the filtration of $W$, so it may be viewed as a martingale on $(\C,\FF,Q)$, where we recall that $Q$ is the first marginal of $M$.
Using L\'evy's criterion, this process is then a Brownian motion on $(\C,\FF,Q)$. It follows that $Q \in {\cal P}^*$ and $q=q^Q$. Finally, using Jensen's inequality, we compute
\begin{align*}
\lambda\tilde{\alpha}^g(Q_0) + (1-\lambda)\tilde{\alpha}^g(Q_1) &= \lambda \E^{Q_0}\left[\int_0^1 g(t,q_0(t))dt\right]+(1-\lambda)\E^{Q_1}\left[\int_0^1 g(t,q_1(t))dt\right] \\ 
	&= \E^M\left[\int_0^1 g(t, q_X(t))dt\right] \\ 
	&\ge \E^M\left[\int_0^1 g(t, q(t))dt\right] = \E^{Q}\left[\int_0^1 g(t,q(t))dt\right] \\
	&= \tilde{\alpha}^g(Q).
\end{align*}

{
\textbf{Inf-compactness:} 
Let $a\in \R$ and $\Lambda_a := \{Q:\tilde{\alpha}^g(Q)\leq a\}$. It is convenient in this step and the next to define
\[
W^Q(t) := W(t) - \int_0^t q^Q(s)ds, \quad t \in [0,1],
\]
for $Q \in \P^*$, recalling that $W^Q$ is a $Q$-Brownian motion by definition of $\P^*$. Letting $A^Q(t) := \int_0^tq^Q(s)ds$, it follows from Lemma \ref{le:H1-tightness} that $\{Q \circ (A^Q)^{-1} : Q \in \Lambda_a\} \subset \P(\C)$ is tight. On the other hand, $\{Q \circ (W^Q)^{-1} : Q \in \Lambda_a\} = \{\W\}$ is a singleton and thus tight. Since each marginal is tight, we deduce that $\{Q \circ (W^Q,A^Q)^{-1} : Q \in \Lambda_a\} \subset \P(\C \times \C)$ is tight. Finally, by continuous mapping, the set $\{Q \circ (W^Q + A^Q)^{-1} : Q \in \Lambda_a\} = \Lambda_a$ is tight.
}

{
\textbf{Lower semicontinuity:} Suppose $\{Q_n : n \in \N \} \subset \Lambda_a$ with $Q_n \rightarrow Q$ weakly for some $Q \in \P(\C)$. We must show that $Q$ belongs to $\Lambda_a$. Define the continuous process
\[
A^n(t) = \int_0^t q^{Q_n}(s)ds = W(t) - W^{Q_n}(t),
\]
for each $n$. Since $Q_n \circ (W^{Q_n})^{-1}$ equals Wiener measure for each $n$, we conclude that $\{Q_n \circ (W,W^{Q_n})^{-1} : n \in \N \}$ is tight, and thus $\{Q_n \circ (W,W^{Q_n},A^n)^{-1} : n \in \N\}$ is tight. 
Relabeling a subsequence, suppose that $Q_n \circ (W,W^{Q_n},A^n)^{-1}$ converges weakly to the law of some $\C^3$-valued random variable $(X,B,A)$.
Using Lemma \ref{le:H1-tightness}, we may assume also that $A(t) = \int_0^tq(s)ds$ for some process $q$ satisfying
\[
\E\int_0^1g(t,q(t))dt \le \liminf \E\int_0^1g(t,q^{Q_n}(t))dt \le a.
\]
Clearly, the law of $B$ is Wiener measure. Moreover, $(W^{Q_n}(s)-W^{Q_n}(t))_{s \in [t,1]}$ is independent of $(W(s),W^{Q_n}(s),A^n(s))_{s \le t}$ for each $t \in [0,1]$, and thus $(B(s)-B(t))_{s \in [t,1]}$ is independent of $(X(s),B(s),A(s))_{s \le t}$. In particular, $B$ is a Brownian motion with respect to the filtration generated by $X$, $B$, and $q$. Finally, notice that
\[
X(t) = B(t) + A(t) = B(t) + \int_0^tq(s)ds,
\]
as the same relation holds in the pre-limit.
A standard argument (see \cite[Exercise (5.15)]{Revuz1999}) shows that $X - \int_0^{\cdot}\widehat{q}(s)ds$ is a Brownian motion, where $\widehat{q}$ is the optional projection of $q$ onto the filtration generated by $X$. By convexity of $g(t,\cdot)$, we have
\[
\E\int_0^1g(t,\widehat{q}(t))dt \le \E\int_0^1g(t,q(t))dt \le a.
\]
Recalling that $Q$ denoted the law of $X$, we conclude that $Q \in \mathcal{P}^*$ and thus $Q \in \Lambda_a$.
}

\textbf{Reverse conjugacy:} By definition $$\tilde{\alpha}^g(Q)\geq\sup_{F\in B_b(\C)}\{\E^Q[F]-\tilde{\rho}^g(F)\}\geq\sup_{F\in C_b(\C)}\{\E^Q[F]-\tilde{\rho}^g(F)\}.$$
Recalling the previous results showing convexity and lower semicontinuity of $\tilde{\alpha}^g$, we may apply the Fenchel-Moreau theorem with respect to the dual pairing between $C_b(\C)$ and the space of measures on $\C$ to get equality above.\hfill\qedsymbol

We close by elaborating slightly on the dual representation of BSDE supersolutions, which was discussed to some extent on page \pageref{eq rho = Y}. In particular, the following slight adaptation of results of \cite{tarpodual} was used in Lemma \ref{le:BSDE-timedep}, which extended equation \eqref{eq rho = Y} to nonzero times $t$.

\begin{lemma}
\label{lem:rep Qt}
	Let $F \in C_b(\C)$.
	The minimal supersolution of the BSDE
	\begin{equation*}
		dY(t) = - g^*(t, Z(t))\,dt + Z(t)\,dW_t, \quad Y(1) = F
	\end{equation*}
	admits the representation 
	\begin{equation*}
		Y(t) = \esssup_{Q \in \Q_t} \E^Q\left[F(W) - \int_t^1g(u,q^Q(u))\,du \, \Big| \, \F_t   \right] \quad P\text{-a.s. for all } t\in [0,1],
	\end{equation*}
	where $\Q_t$ is the set of $Q \in \Q$ such that $Q=\W$ on $\F_t$.
\end{lemma}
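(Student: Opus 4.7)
The identity is the dynamic, time-$t$ analogue of \eqref{eq rho = Y}, which corresponds to the case $t=0$ and is proved in \cite[Theorems 3.4 and 3.10]{tarpodual}. My plan is to adapt that argument directly rather than try to deduce the statement from the $t=0$ case, since the latter route would risk circularity given that Lemma \ref{le:BSDE-timedep} itself relies on the present lemma. Two inequalities have to be established.

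For the direction ``$\ge$'', I would fix $Q\in\Q_t$ and write $q=q^Q$. Since $Q=\W$ on $\F_t$, the process $q$ vanishes on $[0,t]$, and Girsanov's theorem provides a $Q$-Brownian motion $W^Q:=W-\int_0^\cdot q(s)\,ds$. Applying the supersolution inequality \eqref{eq:supersolutions} between $t$ and $1$, decomposing $\int_t^1 Z\,dW = \int_t^1 Z\cdot q\,du + \int_t^1 Z\,dW^Q$ under $Q$, and invoking the Fenchel inequality $g^*(u,Z(u))-Z(u)\cdot q(u)\ge -g(u,q(u))$ yields
\begin{equation*}
Y(t) \;\ge\; F \;-\; \int_t^1 g(u,q(u))\,du \;-\; \int_t^1 Z(u)\,dW^Q(u).
\end{equation*}
Taking $\E^Q[\,\cdot\,|\,\F_t]$ on both sides then gives the required bound as soon as one knows that $\E^Q[\int_t^1 Z\,dW^Q\,|\,\F_t]\le 0$. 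This last point is handled by a standard localization: stop $\int Z\,dW$ along times $\tau_n\uparrow 1$ on which it is a true $\W$-martingale, and pass to the limit via Fatou, using the boundedness of $F$ and the lower bound $g\ge -b$ built into (TI).

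For ``$\le$'', I would denote by $\overline Y(t)$ the essential supremum on the right-hand side and show that it is the value at time $t$ of a supersolution of the same BSDE, so that minimality of $(Y,Z)$ gives $Y(t)\le\overline Y(t)$. The key intermediate step is the dynamic programming identity
\begin{equation*}
\overline Y(s) \;=\; \esssup_{Q\in\Q_s} \E^Q\left[\overline Y(r) - \int_s^r g(u,q^Q(u))\,du \,\bigg|\, \F_s\right], \qquad s\le r,
\end{equation*}
which follows from the stability of the family $\{\Q_s\}$ under concatenation of Girsanov changes supported on disjoint subintervals of $[0,1]$. A c\`adl\`ag modification of $\overline Y$ combined with martingale representation then extracts a process $\overline Z$ making $(\overline Y,\overline Z)$ satisfy \eqref{eq:supersolutions}. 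This is precisely the route taken in \cite[Theorem 3.4]{tarpodual} in the case $t=0$, and the extension to general $t$ amounts to replacing the trivial $\sigma$-field $\F_0$ by $\F_t$ throughout.

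The hard part will be the localization in the ``$\ge$'' direction, because $q^Q$ is in general merely square-integrable and $\int Z\,dW$ is only a $\W$-supermartingale rather than a true martingale. This is exactly the delicate point already treated in \cite{tarpodual,DHK1101}, which is why the authors describe the present result as a ``slight adaptation'' of that reference.
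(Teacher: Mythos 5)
Your outline is workable, but it takes a much heavier, essentially self-contained route than the paper, which proves the lemma in a few lines by reduction rather than by re-running the supersolution duality machinery. The paper invokes \cite[Theorem 3.4]{tarpodual} directly, which already gives the time-$t$ representation $Y(t)=\esssup_{Q\in\Q}\E^Q[F-\int_t^1 g(u,q^Q(u))\,du\,|\,\F_t]$ with the supremum over \emph{all} of $\Q$; the inequality ``$\ge$'' is then immediate because $\Q_t\subseteq\Q$, so none of your verification argument (supersolution inequality, Fenchel, Girsanov, localization and Fatou) is needed. For ``$\le$'' the paper does not rebuild a supersolution from the essential supremum: it uses the directedness of the family (first part of the proof of \cite[Proposition 4.2]{tarpodual}) to pick a maximizing sequence $Q^n\in\Q$, replaces each $Q^n$ by $\bar Q^n\in\Q_t$ whose Girsanov drift is $q^{Q^n}1_{[t,1]}$, and observes via Bayes' rule that $\E^{Q^n}[F-\int_t^1 g(u,q^{Q^n}(u))\,du\,|\,\F_t]=\E^{\bar Q^n}[F-\int_t^1 g(u,q^{\bar Q^n}(u))\,du\,|\,\F_t]$, since the conditional expectation given $\F_t$ only sees the density of $Q^n$ on $[t,1]$. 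That Bayes'-rule invariance is the entire new content of the lemma, and it is the one idea your proposal does not isolate. Your worry about circularity is also unfounded: the paper quotes the external time-$t$ representation over $\Q$, not its own identity \eqref{eq rho = Y}, so nothing in the paper's argument loops back through Lemma \ref{le:BSDE-timedep}. Your route would ultimately work --- it amounts to reproving \cite[Theorems 3.4/3.10]{tarpodual} with $\F_0$ replaced by $\F_t$ --- but as written it leaves precisely the hard steps (the localization/supermartingale argument in ``$\ge$'', and in ``$\le$'' the dynamic programming identity, the c\`adl\`ag modification, the extraction of $\overline Z$, and the supermartingale property of $\int\overline Z\,dW$) at sketch level, whereas the paper's reduction avoids all of them; if you want a self-contained proof you must supply those details, and if not, the shorter reduction through the quoted theorem is the better argument.
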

\begin{proof}
	Since $\Q_t \subseteq \Q$, ''$\ge$'' follows by \cite[Theorem 3.4]{tarpodual}.
	Reciprocally, since by (the first part of the proof of) \cite[Proposition 4.2]{tarpodual} the set $\left\{ \E_Q\left[F(W) - \int_t^1g(u,q^Q(u))\,du \, \Big| \, \F_t   \right] : Q \in \Q\right\}$ of random variables is directed, it holds
	\begin{equation*}
		Y_t = \lim_{n\to \infty}\E^{Q^n}\left[F(W) - \int_t^1g(u,q^{Q^n}(u))\,du \, \Big| \, \F_t   \right]
	\end{equation*}
	for a sequence $Q^n \in \Q$.
	Put $q^n(u) := q^{Q^n}(u)1_{[t,1]}(u)$ and let $\bar Q^n$ be such that $q^{\bar Q^n} =  q^n$.
	Then, $\bar Q^n \in \Q_t$ and it follows from Bayes' rule that
	\begin{align*}
		Y(t)&=\lim_{n\to \infty} \E\left[e^{\int_t^1q^{Q^n}(u)\,dW(u) - \frac12\int_t^1|q^{Q^n}(u)|^2du}\left(F(W) - \int_t^1g(u,q^{Q^n}(u))\,du\right) \, \Big| \, \F_t   \right]\\
		& = \lim_{n\to \infty} \E\left[e^{\int_t^1q^{n}(u)\,dW(u) - \frac12\int_t^1|q^{n}(u)|^2du}\left(F(W) - \int_t^1g(u,q^n(u))\,du\right) \, \Big| \, \F_t   \right]\\
		&= \lim_{n\to \infty} \E^{\bar Q^n}\left[F(W) - \int_t^1g(u,q^{\bar Q^n}(u))\,du \, \Big| \,\F_t   \right],
	\end{align*}
	which proves "$\le$".
\end{proof}

\bibliographystyle{abbrvnat}

\bibliography{references-Funct-ineq-conv}

\begin{thebibliography}{46}
\providecommand{\natexlab}[1]{#1}
\providecommand{\url}[1]{\texttt{#1}}
\expandafter\ifx\csname urlstyle\endcsname\relax
  \providecommand{\doi}[1]{doi: #1}\else
  \providecommand{\doi}{doi: \begingroup \urlstyle{rm}\Url}\fi

\bibitem[Andersson and Djehiche(2011)]{AnderssonDjehiche}
D.~Andersson and B.~Djehiche.
\newblock A maximum principle for {SDE}s of mean-field type.
\newblock \emph{Appl. Math. Optim.}, 63\penalty0 (3):\penalty0 341--356, 2011.

\bibitem[Bayraktar and Keller(2018)]{bayraktar2018path}
E.~Bayraktar and C.~Keller.
\newblock Path-dependent {H}amilton--{J}acobi equations in infinite dimensions.
\newblock \emph{J. Funct. Anal.}, 275\penalty0 (8):\penalty0 2096--2161, 2018.

\bibitem[Bertsekas and Shreve(1978)]{BertsekasShreve}
D.~P. Bertsekas and S.~E. Shreve.
\newblock \emph{Stochastic Optimal Control}, volume 139 of \emph{Mathematics in
  Science and Engineering}.
\newblock Academic Press, Inc. [Harcourt Brace Jovanovich, Publishers], New
  York-London, 1978.
\newblock The discrete time case.

\bibitem[Borell(2000)]{Bo00}
C.~Borell.
\newblock Diffusion equations and geometric inequalities.
\newblock \emph{Potential Anal.}, 12\penalty0 (1):\penalty0 49--71, 2000.

\bibitem[Bouchard and Touzi(2004)]{Bou-Tou04}
B.~Bouchard and N.~Touzi.
\newblock Discrete-time approximation and {M}onte {C}arlo simulation of
  backward stochastic differential equations.
\newblock \emph{Stoch. Proc. Appl.}, 111:\penalty0 175--206, 2004.

\bibitem[Bou\'e and Dupuis(1998)]{Boue-Dup}
M.~Bou\'e and P.~Dupuis.
\newblock A variational representation for certain functionals of {B}rownian
  motion.
\newblock \emph{Ann. Probab.}, 26\penalty0 (4):\penalty0 1641--1659, 1998.

\bibitem[Carmona and Delarue(2015)]{CarmonaDelarue-MVcontrol}
R.~Carmona and F.~Delarue.
\newblock Forward--backward stochastic differential equations and controlled
  {M}c{K}ean--{V}lasov dynamics.
\newblock \emph{Ann. Probab.}, 43\penalty0 (5):\penalty0 2647--2700, 2015.

\bibitem[Delbaen et~al.(2011)Delbaen, Hu, and Bao]{Delbaen11}
F.~Delbaen, Y.~Hu, and X.~Bao.
\newblock Backward {SDEs} with superquadratic growth.
\newblock \emph{Probab. Theory Relat. Fields}, 150:\penalty0 145--192, 2011.

\bibitem[Dembo and Zeitouni(1998)]{Dem-Zei}
A.~Dembo and O.~Zeitouni.
\newblock \emph{Large Deviation Techniques and Applications}.
\newblock Springer, 2 edition, 1998.

\bibitem[Drapeau and Mainberger(2016)]{Drap-Main16}
S.~Drapeau and C.~Mainberger.
\newblock Stability and {M}arkov property of forward backward minimal
  supersolutions.
\newblock \emph{Electron. J. Probab.}, 21\penalty0 (41):\penalty0 1--15, 2016.

\bibitem[Drapeau et~al.(2013)Drapeau, Heyne, and Kupper]{DHK1101}
S.~Drapeau, G.~Heyne, and M.~Kupper.
\newblock Minimal supersolutions of convex {BSDEs}.
\newblock \emph{Ann. Probab.}, 41\penalty0 (6):\penalty0 3697--4427, 2013.

\bibitem[Drapeau et~al.(2016)Drapeau, Kupper, {Rosazza Gianin}, and
  Tangpi]{tarpodual}
S.~Drapeau, M.~Kupper, E.~{Rosazza Gianin}, and L.~Tangpi.
\newblock Dual representation of minimal supersolutions of convex {BSDEs}.
\newblock \emph{Ann. Inst. H. Poincar\'e Probab. Statist.}, 52\penalty0
  (2):\penalty0 868--887, 2016.

\bibitem[Dugundji(1951)]{dugundji1951extension}
J.~Dugundji.
\newblock An extension of {T}ietze's theorem.
\newblock \emph{Pacific J. Math.}, 1\penalty0 (3):\penalty0 353--367, 1951.

\bibitem[Dupuis and Ellis(2011)]{dupuis-ellis}
P.~Dupuis and R.~Ellis.
\newblock \emph{A weak convergence approach to the theory of large deviations},
  volume 902.
\newblock John Wiley \& Sons, 2011.

\bibitem[Ekren et~al.(2014)Ekren, Keller, Touzi, and Zhang]{EKTZ2014}
I.~Ekren, C.~Keller, N.~Touzi, and J.~Zhang.
\newblock On viscosity solutions of path dependent {PDE}s.
\newblock \emph{Ann. Probab.}, 42\penalty0 (1):\penalty0 204--236, 2014.

\bibitem[Evans(1998)]{Evans1998}
L.~C. Evans.
\newblock \emph{Partial Differential Equations}, volume~19 of \emph{Graduate
  Studies in Mathematics}.
\newblock American Mathematical Society, Providence, RI, 1998.
\newblock ISBN 0-8218-0772-2.

\bibitem[Fleming(1977/78)]{Fl78}
W.~H. Fleming.
\newblock Exit probabilities and optimal stochastic control.
\newblock \emph{Appl. Math. Optim.}, 4\penalty0 (4):\penalty0 329--346,
  1977/78.

\bibitem[F{\"o}llmer(1988)]{Foel88}
H.~F{\"o}llmer.
\newblock Random fields and diffusion processes.
\newblock \emph{in \'Ecole d'\'et\'e de Probabilit\'es de Saint-Flour
  XV-XVII-1985-87}, 1362, 1988.
\newblock Lecture Notes in Mathematics. Springer, Berlin.

\bibitem[F{\"o}llmer and Schied(2011)]{FS3dr}
H.~F{\"o}llmer and A.~Schied.
\newblock \emph{Stochastic finance}.
\newblock Walter de Gruyter \& Co., Berlin, 3rd edition edition, 2011.
\newblock An introduction in discrete time.

\bibitem[Gobet and Turkedjiev(2016)]{Gob-Tur16}
E.~Gobet and P.~Turkedjiev.
\newblock Linear regression {MDP} scheme for discrete backward stochastic
  differential equations under general conditions.
\newblock \emph{Math. Comp.}, 85\penalty0 (299):\penalty0 1359--1391, 2016.

\bibitem[Gobet et~al.(2005)Gobet, Lemor, and Warin]{Gob-Lem-War05}
E.~Gobet, J.~Lemor, and X.~Warin.
\newblock A regression-based {M}onte-{C}arlo method to solve backward
  stochastic differential equations.
\newblock \emph{Ann. Appl. Probab.}, 15\penalty0 (3):\penalty0 2172--2002,
  2005.

\bibitem[Kallenberg(2006)]{kallenberg}
O.~Kallenberg.
\newblock \emph{Foundations of modern probability}.
\newblock Springer Science \& Business Media, 2006.

\bibitem[Kobylanski(2000)]{kobylanski01}
M.~Kobylanski.
\newblock Backward stochastic differential equations and partial differential
  equations with quadratic growth.
\newblock \emph{Ann. Probab.}, 28\penalty0 (2):\penalty0 558--602, 2000.

\bibitem[Kupper and Schachermayer(2009)]{kupper02}
M.~Kupper and W.~Schachermayer.
\newblock Representation results for law invariant time consistent functions.
\newblock \emph{Math. Financ. Econ.}, 2\penalty0 (3):\penalty0 189--210, 2009.

\bibitem[Lacker(2017{\natexlab{a}})]{Lac-Sanov}
D.~Lacker.
\newblock A non-exponential extension of sanov's theorem via convex duality.
\newblock \emph{Preprint}, 2017{\natexlab{a}}.

\bibitem[Lacker(2017{\natexlab{b}})]{Lacker-MVcontrol}
D.~Lacker.
\newblock Limit theory for controlled {M}c{K}ean--{V}lasov dynamics.
\newblock \emph{SIAM J. Control Optim.}, 55\penalty0 (3):\penalty0 1641--1672,
  2017{\natexlab{b}}.

\bibitem[Lauri{\`e}re and Pironneau(2014)]{LaurierePironneau}
M.~Lauri{\`e}re and O.~Pironneau.
\newblock Dynamic programming for mean-field type control.
\newblock \emph{Comptes Rendus Mathematique}, 352\penalty0 (9):\penalty0
  707--713, 2014.

\bibitem[Lehec(2013)]{Lehec}
J.~Lehec.
\newblock Representation formula for the entropy and functional inequalities.
\newblock \emph{Ann. Inst. H. Poincar\'e Probab. Statist.}, 49\penalty0
  (3):\penalty0 885--899, 2013.

\bibitem[L\'eonard(2012)]{Le12}
C.~L\'eonard.
\newblock From the {S}chr\"odinger problem to the {M}onge-{K}antorovich
  problem.
\newblock \emph{J. Funct. Anal.}, 262\penalty0 (4):\penalty0 1879--1920, 2012.

\bibitem[L\'eonard(2014)]{Le14}
C.~L\'eonard.
\newblock A survey of the {S}chr\"odinger problem and some of its connections
  with optimal transport.
\newblock \emph{Discrete Contin. Dyn. Syst.}, 34\penalty0 (4):\penalty0
  1533--1574, 2014.

\bibitem[L\'eonard(2016)]{Le16}
C.~L\'eonard.
\newblock Lazy random walks and optimal transport on graphs.
\newblock \emph{Ann. Probab.}, 44\penalty0 (3):\penalty0 1864--1915, 2016.

\bibitem[Lukoyanov(2007)]{lukoyanov2007viscosity}
N.~Y. Lukoyanov.
\newblock On viscosity solution of functional {H}amilton-{J}acobi type
  equations for hereditary systems.
\newblock \emph{Proc. Steklov Inst. Math.}, 259\penalty0 (2):\penalty0
  S190--S200, 2007.

\bibitem[Ma et~al.(2016)Ma, Ren, Touzi, Zhang, et~al.]{ma2016large}
J.~Ma, Z.~Ren, N.~Touzi, J.~Zhang, et~al.
\newblock Large deviations for non-markovian diffusions and a path-dependent
  eikonal equation.
\newblock In \emph{Ann. Inst. H. Poincar\'e Probab. Statist.}, volume~52, pages
  1196--1216. Institut Henri Poincar{\'e}, 2016.

\bibitem[Mikami(2004)]{Mi04}
T.~Mikami.
\newblock Monge's problem with a quadratic cost by the zero-noise limit of
  {$h$}-path processes.
\newblock \emph{Probab. Theory Related Fields}, 129\penalty0 (2):\penalty0
  245--260, 2004.

\bibitem[Mikami and Thieullen(2008)]{MiTh08}
T.~Mikami and M.~Thieullen.
\newblock Optimal transportation problem by stochastic optimal control.
\newblock \emph{SIAM J. Control Optim.}, 47\penalty0 (3):\penalty0 1127--1139,
  2008.

\bibitem[Papapantoleon et~al.(2018)Papapantoleon, Possama\"{i}, and
  Saplaouras]{papapantoleon2018stability}
A.~Papapantoleon, D.~Possama\"{i}, and A.~Saplaouras.
\newblock Stability results for martingale representations: the general case.
\newblock \emph{arXiv preprint arXiv:1806.01172}, 2018.

\bibitem[Pardoux and Peng(1992)]{Pardoux-Peng92}
E.~Pardoux and S.~Peng.
\newblock Backward stochastid differential equations and quasilinear parabolic
  partial differential equations.
\newblock In \emph{Stochastic partial differential equations and their
  applications}, volume 176 of \emph{Lecture Notes in Control and Inform. Sci.}
  Springer, Berlin, 1992.

\bibitem[Pardoux and Tang(1999)]{Pardoux-Tang99}
E.~Pardoux and S.~Tang.
\newblock Forward-backward stochastic differential equations and quasilinear
  parabolic {PDE}s.
\newblock \emph{Probab. Theory Relat. Fields}, 114\penalty0 (2):\penalty0
  123--150, 1999.

\bibitem[Pham and Wei(2018)]{PhamWei-MVcontrol}
H.~Pham and X.~Wei.
\newblock Bellman equation and viscosity solutions for mean-field stochastic
  control problem.
\newblock \emph{ESAIM Control Optim. Calc. Var.}, 24\penalty0 (1):\penalty0
  437--461, 2018.

\bibitem[Rainero(2006)]{rainero2006principe}
S.~Rainero.
\newblock Un principe de grandes d{\'e}viations pour une {\'e}quation
  diff{\'e}rentielle stochastique progressive r{\'e}trograde.
\newblock \emph{Comptes Rendus Mathematique}, 343\penalty0 (2):\penalty0
  141--144, 2006.

\bibitem[Revuz and Yor(1999)]{Revuz1999}
D.~Revuz and M.~Yor.
\newblock \emph{Continuous Martingales and Brownian Motion}, volume 293 of
  \emph{Grundlehren der Mathematischen Wissenschaften}.
\newblock Springer-Verlag, Berlin, 3 edition, 1999.

\bibitem[Rogers and Williams(2000)]{RoWi00}
L.~C.~G. Rogers and D.~Williams.
\newblock \emph{Diffusions, Markov Processes and Martingales}, volume~2.
\newblock Cambridge university press, 2000.
\newblock It\^o calculus.

\bibitem[Saplaouras(2017)]{saplaouras2017backward}
A.~Saplaouras.
\newblock \emph{Backward stochastic differential equations with jumps are
  stable}.
\newblock PhD thesis, Technische Universit\"{a}t Berlin, 2017.

\bibitem[van Handel(2018)]{vanHandel17}
R.~van Handel.
\newblock The {B}orell-{E}hrhard game.
\newblock \emph{Probab. Theory Related Fields}, 170:\penalty0 555--585, 2018.

\bibitem[Villani(2003)]{villani2003topics}
C.~Villani.
\newblock \emph{Topics in optimal transportation}.
\newblock Number~58. American Mathematical Soc., 2003.

\bibitem[Zhang(2017)]{zhangbook}
J.~Zhang.
\newblock \emph{Backward Stochastic Differential Equations -- from linear to
  fully nonlinear theory}.
\newblock Springer, 2017.

\end{thebibliography}

\end{document}